 \newcommand{\Nilp}{\mathrm{Nilp}}
\renewcommand{\AA}{\mathbb{A}}
\renewcommand\widetilde[1]{\ThisStyle{%
  \setbox0=\hbox{$\SavedStyle#1$}%
  \stackengine{-.1\LMpt}{$\SavedStyle#1$}{%
    \stretchto{\scaleto{\SavedStyle\mkern.2mu\sim}{.5467\wd0}}{.7\ht0}%
  }{O}{c}{F}{T}{S}%
}}
\newcommand{\QQ}{\mathbb{Q}}
\newcommand{\ZZ}{\mathbb{Z}}
\newcommand{\GG}{\mathbb{G}}
\newcommand{\VV}{\mathbb{V}}
\renewcommand{\det}{\mathrm{det}}
\newcommand{\tensorhat}{\widehat{\otimes}}
\newcommand{\isoeq}{\cong}
\newcommand{\Id}{\mathrm{Id}}
\newcommand{\std}{\mathrm{std}}
\newcommand{\dR}{\mathrm{dR}}
\newcommand{\Aut}{\mathrm{Aut}}
\newcommand{\Spf}{\mathrm{Spf}}
\newcommand{\Spec}{\mathrm{Spec}}
\newcommand{\Hom}{\mathrm{Hom}}
\newcommand{\Lie}{\mathrm{Lie}}
\newcommand{\Ext}{\mathrm{Ext}}
\newcommand{\GL}{\mathrm{GL}}
\newcommand{\et}{\mathrm{\acute{e}t}}
\newcommand{\Cont}{\mathrm{Cont}}
\newcommand{\Katz}{\mathrm{Katz}}
\newcommand{\Ig}{\mathrm{Ig}}
\newcommand{\univ}{\mathrm{univ}}
\newcommand{\ur}{\mathrm{ur}}
\newcommand{\can}{\mathrm{can}}
\newcommand{\calD}{\mathcal{D}}
\newcommand{\calE}{\mathcal{E}}
\newcommand{\calF}{\mathcal{F}}
\newcommand{\calL}{\mathcal{L}}
\newcommand{\calO}{\mathcal{O}}
\newcommand{\calU}{\mathcal{U}}
\newcommand{\calV}{\mathcal{V}}
\newcommand{\wh}[1]{\widehat{#1}}
\newcommand{\wt}[1]{\widetilde{#1}}
\newcommand{\colim}{\mathrm{colim}}
\newcommand{\fppf}{\mathrm{fppf}}
\newcommand{\crys}{\mathrm{crys}}
\renewcommand{\Im}{\mathrm{Im}}
\renewcommand{\Re}{\mathrm{Re}}
\newcommand{\bbA}{\mathbb{A}}
\newcommand{\bbF}{\mathbb{F}}
\newcommand{\bbG}{\mathbb{G}}
\newcommand{\bbH}{\mathbb{H}}
\newcommand{\bbQ}{\mathbb{Q}}
\newcommand{\bbR}{\mathbb{R}}
\newcommand{\bbV}{\mathbb{V}}
\newcommand{\bbZ}{\mathbb{Z}}
\newcommand{\bbC}{\mathbb{C}}
\newcommand{\frakm}{\mathfrak{m}}
\newcommand{\bighecke}{\mathrm{u}}
\newcommand{\Tate}{\mathrm{Tate}}
\newcommand{\hol}{\mathrm{hol}}
\newcommand{\op}{\mathrm{op}}
\newcommand{\SL}{\mathrm{SL}}
\renewcommand{\Ig}{\mathfrak{I}}
\newcommand{\CS}{\mathrm{CS}}
\renewcommand{\l}{\left}
\renewcommand{\r}{\right}
\newcommand{\bs}{\backslash}
\newcommand{\bD}{\mathbf{D}}
\newcommand{\bE}{\mathbf{E}}
\numberwithin{equation}{subsubsection}
\theoremstyle{plain}
\newtheorem{maintheorem}{Theorem}
\newtheorem*{theorem*}{Theorem}
\newtheorem{theorem}[subsubsection]{Theorem}
\newtheorem{corollary}[subsubsection]{Corollary}
\newtheorem{proposition}[subsubsection]{Proposition}
\newtheorem{lemma}[subsubsection]{Lemma}
\theoremstyle{definition}
\newtheorem{example}[subsubsection]{Example}
\newtheorem{remark}[subsubsection]{Remark}
\title[A unipotent circle action on $p$-adic modular forms]{A unipotent circle action \\ on $p$-adic modular forms}
\author{Sean Howe}
\email{sean.howe@utah.edu}
\address{Department of Mathematics, University of Utah, Salt Lake City, UT~84112}
\subjclass[2020]{11F33, 11F77}
\keywords{$p$-adic modular forms,  $p$-adic L-functions, Igusa varieties, $p$-divisible groups, $p$-adic Hodge theory}
\thanks{The author was supported during the preparation of this work by the National Science Foundation under Award No. DMS-1704005.}
\begin{document}

\begin{abstract}
Following a suggestion of Peter Scholze, we construct an action of $\widehat{\GG_m}$ on the Katz moduli problem, a profinite-\'{e}tale cover of the ordinary locus of the $p$-adic modular curve whose ring of functions is Serre's space of $p$-adic modular functions. This action is a local, $p$-adic analog of a global, archimedean action of the circle group $S^1$ on the lattice-unstable locus of the modular curve over $\bbC$. To construct the $\wh{\bbG_m}$-action, we descend a moduli-theoretic action of a larger group on the (big) ordinary Igusa variety of Caraiani-Scholze. We compute the action explicitly on local expansions and find it is given by a simple multiplication of the cuspidal and Serre-Tate coordinates $q$; along the way we also prove a natural generalization of Dwork's equation $\tau=\log q$ for extensions of $\bbQ_p/\bbZ_p$ by $\mu_{p^\infty}$ valid over a non-Artinian base. Finally, we give a direct argument (without appealing to local expansions) to show that the action of $\wh{\bbG_m}$ integrates the differential operator $\theta$ coming from the Gauss-Manin connection and unit root splitting, and explain an application to  Eisenstein measures and $p$-adic $L$-functions. 
\end{abstract}

\maketitle

\setcounter{tocdepth}{1}
\tableofcontents

\section{Introduction and analogy} In this work, following a suggestion of Peter Scholze, we descend the unipotent quasi-isogeny action on a component Caraiani-Scholze's \cite[Section 4]{caraiani-scholze:generic} ordinary (big) Igusa formal scheme for $\GL_2$ to construct an action of the formal $p$-adic torus $\wh{\bbG_m}$ on the Katz moduli problem over the ordinary locus. Suitably interpreted, this action is a local, $p$-adic analog of the global, archimedean phenomena whereby the horizontal translation action of $\bbR$ on the complex upper half plane $\bbH$ descends to an action of $S^1$ on the image of ${\{ \mathrm{Im} \tau > 1\} \subset \bbH}$ in the complex modular curve. 

The space of functions on the Katz moduli problem that are holomorphic at the cusps is equal the completion of classical modular forms for the $q$-expansion topology (Serre's space of $p$-adic modular functions\footnote{In the body of this text, we reserve the term $p$-adic modular forms for those $p$-adic modular functions with a weight, i.e. that transform via a character under the $\bbZ_p^\times$-action.}). Thus we may think of our $\wh{\bbG_m}$-action as a unipotent circle action on $p$-adic modular functions. The analogy with the archimedean circle action is stronger than one might first guess, and leads, e.g.,  to interesting representation-theoretic consequences. 

After constructing the $\wh{\bbG_m}$-action, we study its properties and interaction with other classical notions in the $p$-adic theory of modular curves and modular forms such as the unit root splitting,  Dwork's equation $\tau=\log q$, the differential operator $\theta$, Gouvea's twisting measure, and Katz's Eisenstein measures. 

We highlight one application to explain the significance of this construction: Via $p$-adic Fourier theory, the $\wh{\bbG_m}$-action is equivalent to the $p$-adic interpolation of powers of polynomials in the differential operator $\theta$. This allows us to introduce a twisting direction into any $p$-adic family of modular forms. In particular, when applied to Eisenstein series, it allows the construction of certain two-variable $p$-adic L-functions studied by Katz \cite{katz:p-adic-interpolation-of-real-analytic-eisenstein-series} starting from single-variable Eisenstein families. A key advantage of this method is that we construct the $\wh{\bbG_m}$-action and then relate it to differential operators obtained from the Gauss-Manin connection without ever using a cuspidal or Serre-Tate ordinary local expansion. In particular, our method will generalize to the $p$-adic interpolation of certain differential operators constructed by Eischen and Mantovan \cite{eischen-mantovan:p-adic-familes-mu-ordinary} on the $\mu$-ordinary locus of more general PEL Shimura Varieties (where local expansions are unavailable or difficult to work with) into actions of Lubin-Tate formal groups. 

In the present work we have focused on exploring the ramifications of the existence of a large quasi-isogeny action on the Caraiani-Scholze Igusa formal scheme for the classical space of Katz/Serre $p$-adic modular functions. In a sequel \cite{howe:completed-kirillov}, we study the action of the quasi-isogeny group on the space of functions on the big Igusa formal scheme itself as a natural space of $p$-adic automorphic forms in the context of the $p$-adic Langlands program. Ordinary $p$-adic modular forms (in the sense of Hida) play an important role in this study, and in \cite{howe:completed-kirillov} we also explain how Hida's finiteness and classicality theorems for ordinary $p$-adic modular forms can be understood from this perspective. 

\subsubsection{Acknowledgements}
We thank Ana Caraiani, Ellen Eischen, Matt Emerton, Yifeng Liu, Elena Mantovan, Peter Scholze, and Jesse Silliman for helpful conversations. We thank Jared Weinstein and an anonymous referee for helpful comments and suggestions.

\subsection{An archimedean circle action}
\newcommand{\gord}{\mathrm{\infty-ord}}
Before stating our results, we explain the analogous archimedean circle action more carefully; this will help to motivate and clarify the $p$-adic constructions that follow. Consider the complex manifold
\begin{equation}\label{eqn:arch-unif} Y_{\gord} := \begin{pmatrix}1 & \bbZ \\ 0 & 1\end{pmatrix} \backslash \{ \mathrm{Im} \tau > 1 \}. \end{equation}
Two important observations about $Y_\gord$ follow immediately from (\ref{eqn:arch-unif}):
\begin{enumerate}
\item Modular forms of level $\Gamma_1(N)$ (for any $N$) restrict to $\begin{pmatrix}1 & \bbZ \\ 0 & 1\end{pmatrix}$-invariant functions $\{ \mathrm{Im} \tau > 1 \}$, and thus induce holomorphic functions on $Y_\gord$.
\item The action of $\bbR$ by horizontal translation on $\bbH$ descends to a (real analytic) action of the circle group $S^1$ on $Y_\gord$. This action integrates the vector field $\frac{d}{d\tau}$. 
\end{enumerate}

We can decompose any holomorphic function $f$ on $Y_\gord$ according to this $S^1$ action uniquely as a Fourier series 
\[ f(q) = \sum_{n \in \bbZ} a_n q^n, \; q=e^{2\pi i z}. \]
In other words, the space of functions on $Y_\gord$ is a Fr\'{e}chet completion of the direct sum of the character spaces for the $S^1$-action, with each character appearing exactly once. 

\subsubsection{Fourier coefficients and representation theory}
The Fourier coefficients $a_n$ of classical modular forms play an important role in the global automorphic representation theory for $\GL_2$. In particular, for a Hecke eigenform, the constant coefficient $a_0$ is non-vanishing if and only if the corresponding global automorphic representation is globally induced (i.e. the modular form is Eisenstein). Suitably interpreted, the constant term $a_0$ is a functional that realizes the induction. The non-constant coefficients, on the other hand, are Whittaker functionals.

\subsubsection{The slope formalism on metrized tori} 
While the construction of $Y_\gord$ above may at first seem ad hoc, it has a natural moduli interpretation, which we explain now. The key point is to use the slope formalism for metrized tori, or, equivalently, lattices, as explained, e.g., in Casselman's survey \cite{casselman:stability}. 

A metrized torus is a finite dimensional torus (compact real abelian Lie group) $T$ together with a translation invariant metric, or, equivalently, a positive definite inner product on $\Lie T \cong H_1(T, \bbR)$. There is a natural slope formalism on metrized tori: the rank function is dimension, and the degree function is given by
\[ \deg T := \mathrm{log} \mathrm{Vol}(T). \]
If a two-dimensional metrized torus $T$ is unstable (i.e., not semi-stable), then it contains a unique circle of shortest length. 

If $E/\bbC$ is an elliptic curve, the underlying real manifold of $E(\bbC)$ is a two-dimensional metrized torus when equipped with the metric coming from the canonical principal polarization. 

\begin{example}\label{example:arch-unif}
Consider the usual fundamental domain 
\[ \calD:= \{ \tau \in \bbH, -1/2 \leq \Re \tau \leq 1/2 \textrm{ and } |\tau|\geq 1\}, \]
for the action of $\SL_2(\bbZ)$ on $\bbH$. For $\tau \in \calF$, let
\[ E_\tau:=\bbC / \langle 1, \tau \rangle. \]

We compute the values of $\tau \in \calD$ for which $E_\tau$ is semistable: The metric induced by the principal polarization is identified with $1/\Im\tau$ times the metric induced by the identity 
\[ \bbR 1 + \bbR \tau = \bbC \]
and the standard metric on $\bbC$. Semistability is preserved by scaling the metric, so we may eliminate the scaling and consider just the metric induced by the standard metric on $\bbC$. The length of a shortest circle in $E_\tau(\bbC)$ is equal to the length of a shortest vector in $H_1(E_\tau, \bbZ)$, which is $1$. The area of the entire torus $E_\tau(\bbC)$, on the other hand, is $\Im \tau$.  Thus, the slope of the full torus is $\frac{1}{2}\log \Im \tau$, while the smallest slope of a circle inside is $0$.  We conclude that for $\tau \in \calD$, $E_\tau$ is semi-stable when $\Im \tau \leq 1$, and otherwise is unstable with shortest circle given by 
\[ S^1 = \bbR / \bbZ \hookrightarrow \bbC / \langle 1, \tau \rangle. \]
\end{example}

\subsubsection{Moduli of unstable elliptic curves}
Using the slope formalism, we may consider the moduli space of unstable elliptic curves $E/\bbC$ equipped with a trivialization of the shortest circle, $S^1 \hookrightarrow E(\bbC)$. From Example \ref{example:arch-unif}, we find that this space is naturally identified with $Y_\gord$. In this moduli interpretation, the space $\Im \tau > 1$ is the cover where the trivialization of the shortest circle is extended to an oriented trivialization $S^1 \times S^1 \xrightarrow{\sim} E(\bbC)$. From the moduli perspective, the fact that we can evaluate modular forms to obtain functions on $Y_\gord$ comes from two facts:
\begin{enumerate}
\item Given a point of $Y_\gord$, there is a unique holomorphic differential $\omega_\can$ whose pullback to $S^1$ along the trivialization of the shortest circle integrates to $1$. Thus, the modular sheaf $\omega$ is canonically trivialized over $Y_\gord$, and modular forms can be evaluated along this trivialization,. 
\item Using the polarization, the trivialization of the shortest circle also gives rise to a trivialization of the quotient torus $E(\bbC)/S^1$, so that $E(\bbC)$ is equipped with the structure of an extension of real tori
\begin{equation}\label{equation:arch-extension} 1 \rightarrow S^1 \rightarrow E(\bbC) \rightarrow S^1 \rightarrow 1 \end{equation}
The basis $1/N$ for the torsion on $S^1=\bbR/\bbZ$ then gives rise to a canonical $\Gamma_1(N)$-level structure on $E$ for any level $N$.
\end{enumerate}

\subsubsection{de Rham cohomology}
Consider the extension structure (\ref{equation:arch-extension}) on the universal elliptic curve over $Y_\gord$. The global section $dx$ of the de Rham cohomology of $S^1=\bbR/\bbZ$ is flat, so we obtain via pullback of $dx$ a canonical flat section $u_\can \in H^1_\dR(E(\bbC), \bbR)$ over $Y_\gord$. Moreover, because the image of $\omega_\can$ in $H^1_\dR(S^1, \bbC)$ under pullback is (by definition) $dx$, which is flat, we find that $\nabla(\omega_\can)$ is in the span of $u_\can$. Thus, $\frac{\nabla \omega_\can}{u_\can}$ is a holomorphic differential form on $Y_\gord$. 

For the elliptic curve $E_\tau$ as in Example \ref{example:arch-unif}, if we denote by $e_1$ and $e_\tau$ the natural basis elements for $H_1(E(\bbC),\bbZ)$ and by $e_1^*$ and $e_\tau^*$ the dual basis, we find that $\omega_\can = e_1^* + \tau e_\tau^*$, and $u_\can = e_\tau^*$, so that 
\[ \frac{\nabla \omega_\can}{u_\can}=d\tau = d\log q. \]
In particular, the $S^1$ action integrates the vector field $\frac{d}{d\tau}$ dual to $\frac{\nabla \omega_\can}{u_\can}$. 

\subsection{Statement of results}
In this section we state our main results. 

\subsubsection{Dictionary} As we introduce the objects appearing in the local, $p$-adic theory, it may be helpful to keep in mind the following dictionary for our analogy with the global, archimedean story:\\
\def\arraystretch{1.5}
\begin{tabular}{p{2.25in} | p{2.25in}}
\textit{Global, archimedean }& \textit{Local, $p$-adic} \\
\hline
$E(\bbC)$ as a metrized torus & The $p$-divisible group $E[p^\infty]$ \\
\hline
Unstable two-dimensional \newline metrized torus & Ordinary height two $p$-divisible group \\
\hline
The shortest circle in $E(\bbC)$ & The formal group $\wh{E}$ \\
\hline
Trivialization of the shortest circle\newline $S^1 \hookrightarrow E(\bbC)$ & Trivialization of the formal group \newline $\wh{E}  \xrightarrow{\sim} \wh{\bbG_m}$ \\
\hline
$1 \rightarrow S^1 \rightarrow E(\bbC) \rightarrow S^1 \rightarrow 1$ & $1 \rightarrow \wh{\bbG_m} \rightarrow E[p^\infty] \rightarrow \bbQ_p/\bbZ_p \rightarrow 1.$ \\
\hline
$d\tau$, $\omega_\can$, $u_\can$ & $d\tau$, $\omega_\can$, $u_\can$\\
\hline
Canonical $\Gamma_1(N)$ level structure & Canonical arithmetic $\Gamma_1(p^n)$ \newline level structure \\
\hline
$Y_\gord$ & The Katz formal scheme $\Ig_\Katz$ \\
\hline
$\Im \tau \geq 1$ & The (polarized) Igusa formal scheme of Caraiani-Scholze $\Ig_{\CS}^1$ \\
\hline
Action of $S^1$ on $Y_\gord$ & Action of $\wh{\bbG_m}$ on $\Ig_\Katz$ \\
\hline
Action of $\bbR$ on $\Im \tau \geq 1$  & Action of the universal cover of $\wh{\bbG_m}$ on $\Ig_{\CS}^1$ \\
\hline
Fourier series & Sheaf over $\bbZ_p$ \\
\hline 
Constant term & Fiber at 0 \\
\hline
Eisenstein series & Ordinary $p$-adic modular form \\

\end{tabular}

\subsubsection{The Katz moduli problem and $p$-adic modular forms}
For $R$ a $p$-adically complete ring, let $\Nilp_R$ be the category of $R$-algebras in which $p$ is nilpotent. We consider the Katz moduli problem on $\Nilp_{\bbZ_p}^{\op}$ classifying, over $S \in \Nilp_{\bbZ_p}$, triples
\[ (E, \wh{\varphi}, \varphi_{\bbA_f^{(p)}}) \]
where $E/\Spec S$ is an elliptic curve up to prime-to-$p$ isogeny, $\wh{\varphi}$ is a trivialization of the formal group of $E$,
\[ \wh{\varphi} : \wh{E} \xrightarrow{\sim} \wh{\bbG_m}, \]
and $\varphi_{\bbA_f^{(p)}}$ is a trivialization of the adelic prime-to-$p$ Tate module.  

By work of Katz \cite{katz:l-via-moduli}, the moduli problem is represented by a $p$-adic formal scheme 
\[ \Ig_{\Katz} = \Spf \bbV_{\Katz} \]
where $\bbV_{\Katz}$ is a $p$-adically complete flat (torsion-free) $\bbZ_p$-algebra. For $R$ a $p$-adically complete $\bbZ_p$-algebra, we write 
\[ \bbV_{\Katz,R} := \bbV_{\Katz} \wh{\otimes}_{\bbZ_p} R \]
so that 
\[ \Ig_{\Katz,R} := \Ig_{\Katz} \times_{\Spf \bbZ_p} \Spf R = \Spf \bbV_{\Katz, R}. \]

There is a natural moduli action of $\bbZ_p^\times \times \GL_2\l(\AA_f^{(p)}\r)$ on $\Ig_\Katz$, where ${\bbZ_p^\times=\Aut(\wh{\bbG_m})}$ acts by composition with $\wh{\varphi}$, and $\GL_2(\AA_f^{(p)})$ acts by composition with $\varphi_{\bbA_f^{(p)}}$. For a continuous character $\kappa$ of $\bbZ_p^\times$ with values in $R$, the eigenspace $\VV_{\Katz, R}[\kappa]$ is a natural space of $p$-adic modular forms of weight $\kappa$.

In particular, classical modular forms of integral weight and prime-to-$p$ level over $\bbZ_p$ (interpreted using sections of the standard integral model of the modular sheaf and curve) are embedded $\GL_2(\AA_f^{(p)})$-equivariantly (up to a twist) in this space for the character $z \mapsto z^k$. Concretely, a classical modular form of prime-to-$p$ level $K^p$ and weight $k$ gives rise to an element of $\bbV_{\Katz}^{K^p}[k]$ by evaluating on the triple
\[ \l(E_\univ, \wh{\varphi}_{\univ}^* \l(\frac{dt}{t}\r), \varphi_{\bbA_f^{(p)},\univ}/K^p\r). \]
Here $(E_\univ, \wh{\varphi}_{\univ}, \varphi_{\bbA_f^{(p)},\univ})$ is the universal triple parameterized by the identity map on $\Ig_{\Katz}$ and $\frac{dt}{t}$ is an invariant differential form on $\wh{\bbG_m}$. In other words, the modular sheaf $\omega$ on $\Ig_{\Katz}$ is trivialized by the canonical section
\[ \omega_\can := \wh{\varphi}_{\univ}^*\l(\frac{dt}{t}\r) \]
which allows us to evaluate classical modular forms after pullback to $\Ig_{\Katz}$.

\subsubsection{de Rham cohomology}
\newcommand{\bbD}{\mathbb{D}}
We write 
\[ \pi:E_\univ \rightarrow \Ig_{\Katz},\]
for the universal elliptic curve up-to-prime-to-$p$-isogeny. We have the relative de Rham cohomology
\[ H^1_\dR(E_\univ):=R^1 \pi_* \Omega^\bullet_{E_\univ/\Ig_\Katz} \]
equipped with Hodge filtration 
\[ 0 \rightarrow \omega_E \rightarrow H^1_\dR \rightarrow \Lie E^\vee \rightarrow 0 \]
and Gauss-Manin connection $\nabla$. 

Note that the moduli problem classified by $\Ig_\Katz$ is equivalent to the moduli problem classifying triples $(E, \wh{\varphi}, \varphi_{\wh{\bbZ}^{(p)}})$ where $E$ and $\wh{\varphi}$ are as before, and $\varphi_{\wh{\bbZ}^{(p)}}$ is a trivialization of the prime-to-$p$ Tate module
\[ \varphi_{\wh{\bbZ}^{(p)}}: (\wh{\bbZ}^{(p)})^2 \xrightarrow{\sim} T_{\wh{\bbZ}^{(p)}} E = \lim_{(n,p)=1} E[n],\]
all considered up to \emph{isomorphism} of $E$. Using this equivalence, we obtain a well-defined Weil pairing on $E[p^\infty]$, and combining this with the trivialization $\wh{\varphi}$, we obtain the structure of an extension
\begin{equation}\label{equation:intro-igusa-extension} 1 \rightarrow \wh{\bbG_m} \rightarrow E[p^\infty] \rightarrow \bbQ_p/\bbZ_p \rightarrow 1. \end{equation}
This is analogous to the archimedean extension (\ref{equation:arch-extension}). In particular, we obtain an extension of Dieudonn\'{e} crystals 
\[ 0 \rightarrow \bD(\bbQ_p/\bbZ_p) \rightarrow \bD(E_\univ[p^\infty]) \rightarrow \bD(\wh{\bbG_m}) \rightarrow 0,\]
which we view as an extension of vector bundles with connection on $\Ig_\Katz$. The sub-bundle $\bD(\bbQ_p/\bbZ_p)$ is the unit root filtration, and it has a canonical basis element $u_\can$. If we identify $\bD(E_\univ[p^\infty])$ with $(H^1_{\dR}(E_\univ), \nabla)$ via the crystalline-de Rham comparison, then the Hodge filtration splits the extension of vector bundles and the image of $\omega_\can$ in $\bD(\wh{\bbG_m})$ is flat. So, $\omega_\can$ and $u_\can$ give a basis for $H^1_\dR(E_\univ)$ such that $\nabla$ is lower nilpotent, and thus determined by a single differential form 
\[ d\tau := \frac{\nabla(\omega_\can)}{u_\can}. \]
By the theory of Kodaira-Spencer, the differential form $d\tau$ is non-vanishing, and thus admits a dual vector field $\frac{d}{d\tau}$ such that $\langle d\tau, \frac{d}{d\tau} \rangle=1$. 

\subsubsection{The $\wh{\bbG_m}$-action.}
Our main result, Theorem \ref{maintheorem:big-hecke} below, shows that the vector field $\frac{d}{d\tau}$ can be integrated to an action of $\wh{\bbG_m}$ on $\Ig_\Katz$, and explains how this action interacts with the action of $\bbZ_p^\times \times \GL_2(\bbA_f^{(p)}).$ To state it, we will need the unramified determinant character $\det_\ur: \GL_2(\bbA_f^{(p)}) \rightarrow \bbZ_{(p)}$ defined by
\[ \det_\ur( (g_l)_{l\neq p} ) = \prod_{l \neq p} |\det g_l|_l. \]

\begin{maintheorem}\label{maintheorem:big-hecke}
There is an action of $\wh{\bbG_m}$ on $\Ig_{\Katz}$ whose derivative is the vector field $\frac{d}{d\tau}$ defined above. Moreover, this action combines with the action of $\bbZ_p^\times \times \GL_2(\AA_f^{(p)})$ to give an action of 
\[ \wh{\bbG_m} \rtimes (\bbZ_p^\times \times \GL_2(\AA_f^{(p)})) \]
where the semi-direct product is formed with the respect to the conjugation action
\[ (z,g) \cdot \zeta \cdot (z, g)^{-1} = \zeta^{z^2 \det_\ur(g)}. \]
\end{maintheorem}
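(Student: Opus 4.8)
The plan is to obtain both the $\wh{\bbG_m}$-action and its compatibilities by descent from the big Igusa formal scheme. I would work with the formal scheme $\Ig_{\CS}^1 \to \Ig_{\Katz}$ parametrizing, over the Katz datum $(E,\wh\varphi,\varphi_{\AA_f^{(p)}})$, a compatible system of $p$-power lifts of a generator of the étale quotient through \eqref{equation:intro-igusa-extension} — equivalently an arithmetic $\Gamma_1(p^\infty)$-structure, i.e.\ a splitting of the Tate-module sequence $0 \to T_p\wh{\bbG_m} \to T_p E[p^\infty] \to T_p(\bbQ_p/\bbZ_p) \to 0$. Such a splitting exists pro-\'etale-locally, so $\Ig_{\CS}^1 \to \Ig_\Katz$ is a torsor under $T_p\wh{\bbG_m} \cong \bbZ_p$ (via $\wh\varphi$). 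On this cover the unipotent part of the quasi-isogeny group is trivialized, and the group $\widetilde{\wh{\bbG_m}}$ of unipotent self-quasi-isogenies of $\wh{\bbG_m}\oplus\bbQ_p/\bbZ_p$ — which gives the unipotent quasi-isogeny action on a component of the Caraiani--Scholze ordinary big Igusa formal scheme \cite{caraiani-scholze:generic} — acts on $\Ig^1_{\CS}$ by shearing, i.e.\ precomposing with $\begin{pmatrix}1 & \ast \\ 0 & 1\end{pmatrix}$. The same moduli description exhibits the commuting actions of $\bbZ_p^\times = \Aut(\wh{\bbG_m})$ (by $\wh\varphi \mapsto z\wh\varphi$, which — since the étale quotient is identified with $\bbQ_p/\bbZ_p$ only through the Weil pairing together with $\wh\varphi$ — simultaneously rescales the étale side by $z^{-1}$) and of $\GL_2(\AA_f^{(p)})$ (by prime-to-$p$ quasi-isogenies).

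To pass to $\Ig_\Katz$, observe that the integral lattice $T_p\wh{\bbG_m} = \bbZ_p \subset \widetilde{\wh{\bbG_m}}$ consists precisely of those unipotent quasi-isogenies that are honest automorphisms $\begin{pmatrix}1 & h \\ 0 & 1\end{pmatrix}$ of $\wh{\bbG_m}\oplus\bbQ_p/\bbZ_p$; these fix the underlying Katz datum and act on $\Ig^1_{\CS}$ exactly as the deck transformations of the $\bbZ_p$-torsor $\Ig^1_{\CS} \to \Ig_\Katz$. Hence the $\widetilde{\wh{\bbG_m}}$-action descends along the torsor to an action of $\wh{\bbG_m} = \widetilde{\wh{\bbG_m}}/\bbZ_p$ on $\Ig_\Katz$; concretely, one checks by pro-\'etale descent that the induced coaction on $\bbV_\Katz$ lands in $\bbV_\Katz \,\tensorhat\, \calO(\wh{\bbG_m})$, using that $\wh{\bbG_m}(R)$ is a torsion $\bbZ_p$-module for $R \in \Nilp_{\bbZ_p}$ (so that raising to $\bbZ_p$-powers makes sense). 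This is the exact analog of $S^1 = \bbR/\bbZ$ acting on $Y_\gord = \{\Im\tau>1\}/\bbZ$. To identify the derivative with $\tfrac{d}{d\tau}$, I would compute the first-order shear in terms of the Gauss--Manin connection on $\bD(E_\univ[p^\infty])$: an infinitesimal unipotent quasi-isogeny deforms the extension \eqref{equation:intro-igusa-extension}, and under the crystalline--de Rham comparison this deformation is exactly the off-diagonal entry $\nabla(\omega_\can) = u_\can \cdot d\tau$ in the basis $(\omega_\can,u_\can)$; alternatively this can be extracted from Dwork's identity $\tau = \log q$ relating $\tau$ to the Serre--Tate coordinate.

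The semidirect-product structure is then a conjugation computation on $\Ig^1_{\CS}$, where all three actions are literal (quasi-)isogenies. For $z \in \bbZ_p^\times$, the induced automorphism of $\wh{\bbG_m}\oplus\bbQ_p/\bbZ_p$ is the polarization-preserving $\mathrm{diag}(z,z^{-1})$ (the $z^{-1}$ forced by the Weil pairing as above), and $\mathrm{diag}(z,z^{-1})\begin{pmatrix}1&h\\0&1\end{pmatrix}\mathrm{diag}(z,z^{-1})^{-1} = \begin{pmatrix}1 & z^2 h \\ 0 & 1\end{pmatrix}$, so $z\zeta z^{-1} = \zeta^{z^2}$. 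For $g \in \GL_2(\AA_f^{(p)})$, the prime-to-$p$ quasi-isogeny it induces is an isomorphism on $E[p^\infty]$ but rescales the polarization — hence the Weil-pairing normalization of the étale quotient, hence the normalization of the splitting — by the prime-to-$p$ degree $\det_\ur(g)$, giving $g\zeta g^{-1} = \zeta^{\det_\ur(g)}$; combining yields $(z,g)\zeta(z,g)^{-1} = \zeta^{z^2\det_\ur(g)}$, which lies in $\wh{\bbG_m}$ since $z^2\det_\ur(g) \in \bbZ_p$. As $\bbZ_p^\times$ and $\GL_2(\AA_f^{(p)})$ act on disjoint pieces of level data they commute, and the remaining group axioms are routine. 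I expect the main obstacle to be the descent step: constructing $\Ig^1_{\CS}$ and the quasi-isogeny action over a $p$-adic formal (and non-Artinian) base rather than over $\bar{\bbF}_p$, verifying that the formal-group coaction genuinely descends, and pinning down the normalizations of the $\bbZ_p^\times$- and $\GL_2(\AA_f^{(p)})$-actions precisely enough that the exponent comes out as $z^2\det_\ur(g)$ on the nose.
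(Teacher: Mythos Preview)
Your approach is essentially the paper's: descend the $\wt{\mu_{p^\infty}}$-action on the polarized Caraiani--Scholze cover $\Ig_{\CS}^1$ along the $T_p\mu_{p^\infty}$-torsor $\Ig_{\CS}^1 \to \Ig_\Katz$, read off the semidirect-product relation from the conjugation of upper-triangular matrices in $B_p$ (with the diagonal forced to $\mathrm{diag}(z,z^{-1}\det_\ur(g)^{-1})$ by the polarization constraint $\wt{\det}=1$), and identify the derivative via the Messing isomorphism for an infinitesimal unipotent shear. One correction: the torsor $\Ig_{\CS}^1 \to \Ig_\Katz$ is \emph{not} pro-\'etale but only fpqc (pro-finite-flat), since $\mu_{p^n}$ is not \'etale over a base where $p$ is nilpotent; relatedly, $T_p\wh{\bbG_m}$ is not the constant group $\bbZ_p$ over such a base --- the paper handles this by working fpqc-locally throughout (see Lemma~\ref{lemma:torsor}), and your ``alternative'' appeal to Dwork's identity for the derivative is exactly what the paper avoids in order to keep the argument local-expansion-free.
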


\begin{remark} The $\wh{\bbG_m}$-action of Theorem \ref{maintheorem:big-hecke} is uniquely determined by the condition that it integrates $\frac{d}{d\tau}$. We note that $\frac{d}{d\tau}$ acts as the derivation $-\theta=-q\frac{d}{dq}$ on cuspidal $q$-expansions; however, in our proof we construct the action and prove it integrates $\frac{d}{d\tau}$ without using cuspidal (or Serre-Tate) $q$-expansions, which is an important point for future generalizations. 
\end{remark}

The key observation in the construction of this $\wh{\bbG_m}$-action and subsequent computations is that we may work on a very ramified cover, a component of the (big) Igusa formal scheme of Caraiani-Scholze \cite[Section 4]{caraiani-scholze:generic}, where the extension structure (\ref{equation:intro-igusa-extension})  extends to a trivialization of the $p$-divisible group 
\[ \varphi: E[p^\infty] \xrightarrow{\sim} \widehat{\GG}_m \times \bbQ_p/\bbZ_p. \]
At the price of the ramification, life is simplified on this cover: for example, computations with the crystalline connection are reduced to computing the crystalline realization of maps $\bbQ_p/\bbZ_p \rightarrow \wh{\bbG_m}$. Most importantly, the obvious action of automorphisms of $\wh{\bbG_m} \times \bbQ_p/\bbZ_p$ on this cover extends to an action of a much larger group of quasi-isogenies.

This quasi-isogeny group contains a very large unipotent subgroup, the quasi-isogenies from $\bbQ_p/\bbZ_p$ to $\wh{\bbG_m}$, or, the universal cover $\wt{\wh{\bbG_m}}$ in the language of Scholze-Weinstein \cite{scholze-weinstein:p-div}. The action of this large group of quasi-isogenies is the ultimate source of the $\wh{\bbG_m}$-action on $\Ig_\Katz$. Indeed: $\Ig_\Katz$ is the quotient by the subgroup of isomorphisms, i.e. the Tate module $T_p \wh{\bbG_m}$, and thus picks up a residual action of 
\[ \wh{\bbG_m}=\wt{\wh{\bbG_m}} / T_p \wh{\bbG_m}. \]

\begin{remark}
The action of a larger group of quasi-isogenies on this cover is a natural characteristic $p$ analog of the prime-to-characteristic phenomenon where, when full level is added at $l\neq p$, there is an isogeny moduli interpretation that gives an action of $\mathrm{GL}_2(\bbQ_l)$ extending the action of $\mathrm{GL}_2(\bbZ_l)$ in the isomorphism moduli interpretation. Rigidifying in characteristic $p$ using isomorphisms to an ordinary $p$-divisible group provides both more and less structure than when $l \neq p$: on the one hand, the isogeny group is solvable, and thus appears more like the subgroup of upper triangular matrices, but on the other hand the unipotent subgroup has a much richer structure than any groups that appear when $l \neq p$. If we instead rigidified using a height two formal group, we would obtain a super-singular Igusa variety, which has more in common with the $l \neq p$ case (the isogeny action is by the invertible elements of the non-split quaternion algebra over $\bbQ_p$); in \cite{howe:p-adic-jl} we use this structure to compare $p$-adic modular forms and continuous $p$-adic automorphic forms on the quaternion algebra ramified at $p$ and $\infty$. 
\end{remark}

\begin{remark}\label{remark:big-hecke-terminology} In this remark we explain a connection to perfectoid modular curves: The generic fiber of the big Igusa formal scheme is a twist of a component of the perfectoid ordinary locus over $\bbC_p$. This component admits a natural action of the group of upper triangular matrices
\[ \begin{pmatrix} 1 & \bbQ_p \\ 0 & 1 \end{pmatrix} \]
which is identified over $\bbC_p$ with an action of $\bbQ_p(1)$ on the generic fiber of the big Igusa formal scheme.

Using this, the action of the $p$-power roots of unity $\QQ_p(1)/\ZZ_p(1)$, an infinite discrete set inside of the open ball $\wh{\bbG_m}(\calO_{\bbC_p})$, on functions on the generic fiber of $\Ig_{\Katz, \calO_{\bbC_p}}$ can be identified with the action of the natural Hecke operators $\QQ_p/\ZZ_p$ on the invariants under 
\[ \begin{pmatrix} 1 & \ZZ_p \\ 0 & 1 \end{pmatrix} \] in functions on this component of the perfectoid ordinary locus. Thus, the $\wh{\bbG_m}$ action extends the obvious action of $\bbQ_p/\bbZ_p$ to an action of a much larger group. We will not use this connection to perfectoid modular curves in the present work, however, it will play an important role in \cite{howe:completed-kirillov}.     
\end{remark}

\subsubsection{Local expansions}
An important aspect of our proof of Theorem \ref{maintheorem:big-hecke} is that we make no appeal to local expansions at cusps or ordinary points, so that our approach is well-suited for generalization to other PEL Igusa varieties. After proving Theorem~\ref{maintheorem:big-hecke}, however, we also give a direct computation of the action on local expansions: we find that at ordinary points the action is given by multiplication of a Serre-Tate coordinate, and at the cusps it is given by multiplication of the inverse of the standard cuspidal coordinate $q$.

\subsubsection{Dwork's equation.} While developing some of the machinery used to compute the local expansions of the $\wh{\bbG_m}$-action, and using the same philosophy of base change to a very ramified cover, we also give a new proof of Dwork's equation $\tau=\log q$ on the formal deformation space of $\wh{\bbG_m} \times \bbQ_p/\bbZ_p$ over $\overline{\bbF_p}$ which is valid for a larger family of \emph{Kummer} $p$-divisible groups (which include not only the deformations of $\wh{\bbG_m} \times \bbQ_p/\bbZ_p$ over Artinian $\overline{\bbF_p}$-algebras, but also, e.g., the $p$-divisible group of the Tate curve, and other interesting groups when the base is not Artinian). These results can be found in Section \ref{section:extensions}. 

\subsubsection{Other constructions}
This action can be constructed in at least three other ways, two of which have been discussed previously in the literature:
\begin{enumerate}
\item After preparing an earlier version of this article, we learned that Gouvea \cite[III.6.2]{gouvea:arithmetic-of-p-adic-modular-forms} had already some time ago constructed a twisting measure equivalent to our $\wh{\bbG_m}$-action (interpreted as an algebra action via $p$-adic Fourier theory as described in \ref{sss:intro-algebra} below). In \ref{ss:gouvea} we recall Gouvea's construction and explain how it can be rephrased as an alternate construction of the $\wh{\bbG_m}$-action via the exotic isomorphisms of Katz \cite[5.6]{katz:p-adic-interpolation-of-real-analytic-eisenstein-series}.
	Gouvea's construction has the advantage of using only classical ideas, but is conceptually more opaque. In particular, we note that the interaction of the $\wh{\bbG_m}$-action with the prime-to-$p$ group action (equivalently, Hecke operators away from $p$) is considerably clarified by our construction. 

\item While the current version of this article was under review, we learned that our $\wh{\bbG_m}$-action is also a special case of a result of Liu-Zhang-Zhang \cite[Proposition 2.3.5]{liu-zhang-zhang}, who gave a construction of a Lubin-Tate action on more general Shimura curves by using the Baer sum of extensions. There are some issues with the proof as written in loc. cit. because of a mistake in the statement of Serre-Tate theory \cite[Theorem B.1.1]{liu-zhang-zhang} over rings where $p$ is nilpotent (where one must allow for unipotent quasi-isogenies as well as isomorphisms, consider e.g. the base $\bbF_p[x]/x^2$). The connection between these two constructions will be elaborated further in future work of the author constructing actions on $\mu$-ordinary Igusa varieties. 

\item The simplest and most opaque approach is to build the $\wh{\bbG_m}$-action algebraically starting with the differential operator $\theta$ and the $q$-expansion principle; we explain this in Remark \ref{remark:big-hecke-theta-construction} below. 
\end{enumerate}

\subsubsection{The algebra action}\label{sss:intro-algebra}
Via $p$-adic Fourier theory, the action of $\wh{\bbG_m}$ described in Theorem \ref{maintheorem:big-hecke} is equivalent to an action of $\Cont(\bbZ_p, \bbZ_p)$ on $\bbV_\Katz$. This action admits a particularly simple description on cuspidal $q$-expansions: $f \in \Cont(\bbZ_p, \bbZ_p)$ acts as multiplication by $f(n)$ on the coefficient of $q^n$ (cf. Theorem \ref{theorem:algebra-action-q-expansions}). As remarked above, the existence of this algebra action was first established by Gouvea \cite[Corollary III.6.8]{gouvea:arithmetic-of-p-adic-modular-forms}, who interpreted it as a twisting measure. 

From this perspective, the action of the monomial function $z^k$ is by the derivation $\theta^k$ (recall $\theta=q\frac{d}{dq}$), and thus we may view our $\wh{\bbG_m}$-action as interpolating the differential operators $\theta^k$ into an algebra action. In Section \ref{section:eisenstein-measures} we adopt this perspective to reinterpret some results of Katz \cite{katz:p-adic-interpolation-of-real-analytic-eisenstein-series} on two-variable Eisenstein measures.

\begin{remark}\label{remark:big-hecke-theta-construction}
In fact, we can \emph{construct} the $\wh{\bbG_m}$-action by applying the $q$-expansion principle \cite[5.2]{katz:p-adic-interpolation-of-real-analytic-eisenstein-series} to complete the action of polynomials in $\theta$ on $\bbV_\Katz$ to an action of $\Cont(\bbZ_p, \bbZ_p)$. Note that polynomials are not dense $\Cont(\bbZ_p,\bbZ_p)$, so the $q$-expansion principle needed here says not just that the $q$-expansion map is injective, but also that the cokernel is flat over $\bbZ_p$. 

In order to use this method, one must first show that the operator $\theta$ on $q$-expansions preserves the space of $p$-adic modular forms (instead of deducing this by differentiating the $\wh{\bbG_m}$ action). One way this can be done is by showing it is the effect on $q$-expansions of the differential operator dual to the image of $\omega_\can^2$ under the Kodaira-Spencer isomorphism, which can be verified by a computation over $\bbC$, as explained by Katz \cite[5.8]{katz:p-adic-interpolation-of-real-analytic-eisenstein-series}. 
\end{remark}

\subsubsection{Ordinary $p$-adic modular forms}
The action of $\Cont(\bbZ_p, \bbZ_p)$ interacts naturally with the $\bbZ_p^\times$ action on $\bbV_{\Katz}$, and thus  we may view $\bbV_\Katz$ as a $\bbZ_p^\times$-equivariant quasi-coherent sheaf on the profinite set $\bbZ_p$ (viewed as a formal scheme whose ring of functions \emph{is} $\Cont(\bbZ_p,\bbZ_p)$). As $\bbZ_p$ is the space of characters of $\wh{\bbG_m}$, this viewpoint is analogous to thinking of functions on $Y_\gord$ in the global, archimedean setting as Fourier series. 

A straightforward computation with $q$-expansions implies that restriction induces an isomorphism between the fiber at $0 \in \bbZ_p$ of the subsheaf $\bbV_{\Katz,\hol}$ of $p$-adic modular function with $q$-expansion holomorphic at all cusps and the space of ordinary $p$-adic modular forms \`{a} la Hida. Note that the fiber at zero is the maximal trivial quotient for the $\wh{\bbG_m}$-action, and ordinary modular forms are those such that the corresponding $p$-adic Banach representation of $\GL_2(\bbQ_p)$ admits a map to a unitary principal series. Thus, our statement is a local, $p$-adic analog of the global, archimedean statement that the global automorphic representation attached to a classical modular form is globally induced if and only if its Fourier expansion has a non-zero constant term. 

We do not discuss this phenomenon further in the present work, but this characterization of ordinary $p$-adic modular forms will play an important role in our study of functions on $\Ig_\CS$ as a natural space of $p$-adic automorphic forms in \cite{howe:completed-kirillov}. Moreover, this perspective also leads to representation-theoretic proofs of Hida's finiteness and classicality results for ordinary $p$-adic modular forms, as will be explained in \cite{howe:completed-kirillov}.  

\subsection{A remark on notation}
Over a ring in which $p$ is topologically nilpotent, the formal group $\wh{\bbG_m}$ is equivalent to the $p$-divisible group $\mu_{p^\infty}$. In the introduction so far we have only used the notation $\wh{\bbG_m}$, because we wanted to emphasize in our discussion of the action that this is not a torsion group (e.g., the $\bbZ_p$-points are $1+p\bbZ_p$). In the remainder of the article, however, it will be convenient to prefer the notation $\mu_{p^\infty}$ when we are speaking about $p$-divisible groups appearing, e.g., in a moduli problem, and to generally reserve the notation $\wh{\bbG_m}$ for when we are discussing the action on $\Ig_{\Katz}$. This is especially convenient to avoid the oversized notation
\[ \wt{\wh{\bbG_m}} \]
when discussing universal covers! 

\subsection{Outline}
In Section \ref{section:p-divisible-groups} we collect some results on $p$-divisible groups that will be needed in the rest of the paper. In Section \ref{section:extensions} we study extensions of $\bbQ_p/\bbZ_p$ by $\mu_{p^\infty}$; in particular, we introduce \emph{Kummer $p$-divisible groups} (following a construction of Katz-Mazur \cite[8.7]{katz-mazur}) and prove our generalization of Dwork's formula ${\tau=\log q}$. 

In Section \ref{section:moduli-problems} we recall the Katz and Caraiani-Scholze moduli problems over the ordinary locus, and explain the relation between them. 

In Section \ref{sec:the-action} we construct the action of $\wh{\bbG_m}$ and prove Theorem \ref{maintheorem:big-hecke}. In Section~\ref{sec:local-expansions} we compute the action on local expansions, and show that there is no global Serre-Tate coordinate on $\Ig_\Katz$ (dispelling some myths in the literature).  

In Section \ref{sec:algebra-action} we explain how to obtain the algebra action of $\Cont(\bbZ_p, \bbZ_p)$ using $p$-adic Fourier theory, and compare our construction to Gouvea's original construction of this algebra action.  Finally, in Section \ref{section:eisenstein-measures} we explain an application to Eisenstein measures and $p$-adic $L$-functions.

\section{Preliminaries on $p$-divisible groups}\label{section:p-divisible-groups}
\newcommand{\Alg}{\mathrm{Alg}}

In this section we collect some results on $p$-divisible groups that will be useful in our construction. Our principal references are \cite{messing:crystals-associated-to-barsotti-tate} and \cite{scholze-weinstein:p-div}; we also provide some complements. 

For the proof Theorem \ref{maintheorem:big-hecke}, the most important result in this section is Lemma~\ref{lemma:vector-extension-map}. It computes, for $I$ a nilpotent divided powers ideal in a ring $R$ where $p$ is nilpotent, the action of 
\[ \Hom(\bbQ_p/\bbZ_p |_{R/I}, \mu_{p^\infty}|_{R/I}) \]
on the Messing crystals evaluated on $R$.

\subsection{$p$-divisible groups}
Let $R$ be a ring. A $p$-divisible group $G$ of height $h$ over $R$ is, following Tate \cite[2.1]{tate:p-divisible-groups}, an inductive system \[ (G_i, \iota_i), i \geq 0 \]
of finite and locally free group schemes $G_i$ of order $p^{ih}$ over $R$ equipped with closed immersions $\iota_i: G_i \rightarrow G_{i+1}$ identifying $G_i$ with the kernel of multiplication by $p^i$ on $G_{i+1}.$

\begin{example}\hfill
\begin{enumerate}
\item We write $\mu_{p^\infty}$ for the inductive system $(\mu_{p^i})_{i \geq 0}$, where $\mu_{p^n}$ is the kernel of multiplication by $p^n$ on $\bbG_m$, and the inclusion maps are the obvious ones; it is a p-divisible group of height $1$. When $p$ is topologically nilpotent on $R$ we also write $\mu_{p^\infty}=\wh{\bbG_m}$, notation that will be explained below.
\item We write $\bbQ_p/\bbZ_p$ for the inductive system $(1/p^n \bbZ_p / \bbZ_p)$ with the obvious inclusions; it is a $p$-divisible group of height $1$. 
\end{enumerate}
\end{example}

Given a $p$-divisible group, each of the $G_i$ defines a presheaf in abelian groups on $\Alg_R^\op$, and we will also denote by $G$ the presheaf $\colim G_i$ so that 
\begin{equation}\label{equation:p-divisible-group-presheaf}G(S) = \colim G_i(S) \end{equation}
for $S$ an $R$-algebra. With this notation, we have a canonical identification ${G_i=G[p^i]}$. 

\begin{remark}
Note that the maps are injective as maps of presheaves, so that in any faithful topology where the objects of $\Alg_R^\op$ are all quasi-compact (e.g. fppf), (\ref{equation:p-divisible-group-presheaf}) is also the colimit as sheaves by \cite[Lemma 7.17.5]{stacks-project}. In particular, one could instead define a $p$-divisible group as, e.g., an fppf sheaf satisfying certain properties, as is often done in the literature. We prefer the given definition because we will have occasion later on to consider finer topologies. 
\end{remark}

\begin{remark}
We will usually consider $p$-divisible groups over a ring $R$ where $p$ is nilpotent, or over an affine formal scheme $\Spf R$ where $p$ is topologically nilpotent in $R$. In the latter case, there are two natural ways one might try to define $G(S)$ for $S$ a topological $R$-algebra: one could first algebraize to obtain a $p$-divisible group over $\Spec R$, then apply the definition above, or one could take the limit of $G(S/I)$ where $I$ runs over the ideals defining the topology on $R$. The latter is the correct definition for our purpose. For example, if $R=\calO_{\bbC_p}$ with the $p$-adic topology and $G=\wh{\bbG_m} (= \mu_{p^\infty} )$, then, the second, correct, definition gives $\wh{\bbG_m}(R)=1 + \frakm$ where $\frakm$ is the maximal ideal in $\calO_{\bbC_p}$ while the first, incorrect, definition gives only the $p$-power roots of unity.  
\end{remark}

\subsection{Formal neighborhoods and Lie algebras}\label{subsection:formal-neighborhoods-and-lie-algebras}
For $G$ a presheaf in abelian groups on $\Alg_R^\op$, we define the formal neighborhood of the identity $\widehat{G}$ by 
\[ \wh{G}(S) = \ker G(S) \rightarrow G(S^\mathrm{red}) \]
and the Lie algebra $\Lie G$ by
\[ \Lie G(S) = \ker G(S[\epsilon]/\epsilon^2) \rightarrow G(S). \]
Note that, by definition $\Lie G(S) = \Lie \wh{G}(S)$. We have the following important structural result: 
\begin{theorem}\label{theorem:p-divisible-formally-smooth}\cite[Theorems 3.3.13 and 3.3.18]{messing:crystals-associated-to-barsotti-tate} 
If $G$ is a $p$-divisible group over a ring $R$ where $p$ is nilpotent, then $\wh{G}$ is a formal Lie group and $G$ is formally smooth. 
\end{theorem}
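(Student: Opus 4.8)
The plan is to deduce both assertions from the structure theory of $p$-divisible groups over a field of characteristic $p$, bootstrapped up along nilpotent thickenings; the genuine content is the field case, and the rest is flatness-and-limits bookkeeping. For the reductions: since the $G_n := G[p^n]$ are finite locally free, hence finitely presented, $G$ descends to a finitely generated $\ZZ$-subalgebra of $R$, so we may assume $R$ Noetherian. Being a formal Lie group is Zariski-local on $\Spec R$ and, together with formal smoothness of $\wh{G}$, may be checked after completion at each prime; and since any map $\Spec S \to G$ from a quasi-compact scheme factors through some $G_n$, the formal smoothness of the functor $G$ likewise reduces, on the relevant affine test rings, to the complete local case. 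Passing to the limit over the Artinian quotients $R/\mf^N$ we are reduced to $R$ Artinian local with residue field $k$; as $p$ is nilpotent, $p \in \mf$, so $\mathrm{char}\, k = p$.

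Over $k$ one has the connected--\'{e}tale sequence $0 \to G^0 \to G \to G^{\et} \to 0$. The formal completion annihilates the \'{e}tale quotient, so $\wh{G} = \wh{G^0} = G^0$, and everything comes down to the classical fact (Demazure, Messing) that a connected $p$-divisible group over a field of characteristic $p$ is a formal Lie group $\Spf k[[x_1,\dots,x_d]]$ with $d = \dim_k \Lie G$: using the relative Frobenius together with the order identity $\# G_n = p^{nh}$, one shows that the local ring of $G_n$ at the identity section is a truncated power series ring in $d$ variables, and passes to the colimit over $n$. \textbf{This structural input over a field is the main obstacle.} It is the only step genuinely using characteristic-$p$ geometry; everything preceding and following it is formal.

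Next I would bootstrap. Suppose $R \twoheadrightarrow R/I$ is a square-zero extension over which the theorem is known (so eventually $I \cong k^r$ and $R/I$ has smaller length). Each $G_n$ is finite locally free over $R$, so the completion $A_n$ of $\mathcal{O}_{G_n}$ at the identity section is $R$-flat, and by hypothesis $A_n \otimes_R R/I$ is a truncated power series ring in $d := \mathrm{rank}_R \Lie G$ variables. Lifting a regular system of parameters and invoking the local criterion of flatness shows $A_n$ is the quotient of $R[x_1,\dots,x_d]$ by the corresponding truncation ideal, with $d$ independent of $n$ for $n$ large; passing to the colimit yields $\wh{G} \cong \Spf R[[x_1,\dots,x_d]]$, and the formal group law deforms because $A_n \otimes_R A_n$ is again $R$-flat with the expected special fibre. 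Thus $\wh{G}$ is a formal Lie group, and spreading out recovers the statement over general Noetherian, hence over general, $R$.

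Finally, for formal smoothness of $G$: along a square-zero extension $S' \twoheadrightarrow S$ of $R$-algebras with kernel $I$, a point of $\wh{G} = \Spf R[[x_1,\dots,x_d]]$ lifts because one may lift each (automatically nilpotent) coordinate arbitrarily, and the set of such lifts is a torsor under the $S'$-module $\Lie(\wh G)\otimes_R I$, which is trivial over the affine base $\Spec S'$. The quotient $G/\wh{G}$ has $\Lie(G/\wh{G}) = \Lie G/\Lie\wh G = 0$, hence is an \'{e}tale $p$-divisible group, so its $S$-points lift uniquely to $S'$-points. Combining these two facts, the obstruction to lifting a point of $G$ along $S' \twoheadrightarrow S$ is the obstruction to lifting across the extension by $\wh G$, which lies in $H^1$ of $\Spec S'$ with coefficients in the quasi-coherent sheaf $\Lie(G)\otimes_R I$ and therefore vanishes; hence $G$ is formally smooth.
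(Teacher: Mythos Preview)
The paper does not prove this theorem; it is quoted from Messing's monograph and used as a black box. There is therefore no argument in the paper to compare your sketch against.

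That said, your outline is essentially Messing's own strategy, and you have correctly identified the field case as the real content. The one place that needs tightening is the reduction for formal smoothness of $G$. Formal smoothness is quantified over \emph{all} thickenings $S' \twoheadrightarrow S$ of $R$-algebras, so ``checking after completion at each prime of $R$'' is not by itself a reduction: completing $R$ does nothing to the test rings $S,S'$. A cleaner route is to first base-change $G$ to $S'$ (still a $p$-divisible group over a ring in which $p$ is nilpotent), so that the problem becomes lifting along the single thickening $S' \twoheadrightarrow S$ of the ground ring itself. Your connected--\'etale argument is then valid after passing to the Henselian local rings of $S'$, where the sequence $0 \to \wh{G} \to G \to G^{\et} \to 0$ genuinely exists; the resulting local lifts glue because, for square-zero $I$, the set of lifts of a fixed $g \in G(S)$ is a pseudo-torsor under $\ker\bigl(\wh{G}(S')\to\wh{G}(S)\bigr) \cong \Lie(G)\otimes_R I$, a quasi-coherent sheaf with vanishing higher cohomology on affines. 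Relatedly, the assertion that $G/\wh{G}$ is an \'etale $p$-divisible group over a general base presupposes that $\wh{G}[p^n]$ is finite flat and that $\omega_{G/\wh{G}}$ is locally free (so that $\Lie = 0$ forces \'etaleness); both hold, but are themselves part of the package proved in Messing rather than formal consequences. With these adjustments, your argument is sound.
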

For a $p$-divisible group $G$, we will sometimes write $G^\circ$ (the connected component of the identity in $G$) instead of $\wh{G}$ to lighten notation. 

\subsection{Universal covers}\label{subsection:universal-covers}
For any presheaf in abelian groups $G$, we define
\[ \widetilde{G} := \lim G \xleftarrow{p} G \xleftarrow{p} \ldots  \]
and its sub-functor
\[ T_p G := \lim {1} \xleftarrow{p} G[p] \xleftarrow{p} G[p^2] \ldots. \]
For $A \in \Nilp_R^\op$ we will write an element of $\wt{G}(A)$ as a sequence $(g_0, g_1, \ldots )$ such that $p(g_{i+1})=g_i$ for all $i \geq 0$; the elements of $T_p G$ are those such that $g_0=1$. In particular, we have an exact sequence of presheaves
\[ 1 \rightarrow T_p G \rightarrow \widetilde{G} \rightarrow G \]
where the map $\wt{G} \rightarrow G$ is $(g_0, g_1, \ldots) \mapsto g_0$. 

When $G$ is a $p$-divisible group, we call $\widetilde{G}$ the universal cover, following \cite{scholze-weinstein:p-div}. In this case, we have
\begin{lemma}\label{lemma:universal-cover-fpqc-surjection}
If $G$ is a $p$-divisible group,
\begin{equation}\label{equation:universal-cover-exact} 1 \rightarrow T_p G \rightarrow \widetilde{G} \rightarrow G  \rightarrow 1
\end{equation}
is an exact sequence of sheaves in the fpqc topology.
\end{lemma}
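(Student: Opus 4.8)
The plan is to reduce everything to a single fact — that multiplication by $p$ is an epimorphism of fppf sheaves on $G$ — and then bootstrap it to the desired fpqc-surjectivity of $\widetilde{G}\to G$ by chasing a countable tower of covers.

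First I would observe that $T_p G$, $\widetilde{G}$ and $G$ are all fpqc sheaves: each $G[p^i]$ is representable and hence a sheaf, $G=\colim_i G[p^i]$ is a filtered colimit of these along monomorphisms between quasi-compact objects (so it remains a sheaf, as in the remark preceding the lemma), and $\widetilde{G}$ and $T_p G$ are limits of sheaves. Since $T_p G=\ker(\widetilde{G}\to G)$ already as presheaves, and kernels of maps of sheaves are computed sectionwise, exactness at $T_p G$ and at $\widetilde{G}$ is automatic. Thus the only thing to prove is that $\widetilde{G}\to G$ is surjective in the fpqc topology.

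Next, the key step: $[p]\colon G\to G$ is an epimorphism of fppf sheaves. Given an $R$-algebra $S$ and $g\in G(S)=\colim_i G[p^i](S)$, choose $i$ and a lift $\tilde g\in G[p^i](S)$, viewed inside $G[p^{i+1}](S)$. By the structure theory of $p$-divisible groups (see \cite[\S2.2]{tate:p-divisible-groups}), the map $[p]\colon G[p^{i+1}]\to G[p^i]$ is finite and faithfully flat. Hence $S':=S\times_{\tilde g,\,G[p^i],\,[p]}G[p^{i+1}]$ is finite faithfully flat over $S$, so $\{S\to S'\}$ is an fppf cover, and the tautological $S'$-point $h\in G[p^{i+1}](S')$ satisfies $[p]\,h=g|_{S'}$ in $G(S')$.

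Finally I would iterate: set $S_0=S$, $g_0=g$, and given $(S_n,g_n)$ with $g_n\in G(S_n)$, apply the previous step to get an fppf cover $S_n\to S_{n+1}$ and $g_{n+1}\in G(S_{n+1})$ with $p\,g_{n+1}=g_n|_{S_{n+1}}$. Put $S_\infty:=\colim_n S_n$. The images of the $g_n$ in $G(S_\infty)$ assemble into an element $(g_0,g_1,g_2,\dots)\in\widetilde{G}(S_\infty)$ lifting $g|_{S_\infty}$, so it remains only to check that $\{S\to S_\infty\}$ is an fpqc cover — and this is the one point that deserves genuine care. It holds because a filtered colimit of flat $S$-modules is flat, surjectivity on $\Spec$ survives the limit (each fiber is the spectrum of a nonzero ring, $S_n$ being faithfully flat over $S$), and $\Spec S_\infty$ is affine, hence quasi-compact. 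Everything else is formal; one could alternatively invoke \cite{scholze-weinstein:p-div}, but the chase above is self-contained given the basic theory of $p$-divisible groups.
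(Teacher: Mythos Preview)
Your proof is correct and follows essentially the same approach as the paper: both reduce to the fpqc-surjectivity of $\widetilde{G}\to G$ and produce the required cover as a countable tower of finite faithfully flat extensions coming from $[p]\colon G[p^{i+1}]\to G[p^i]$. The only difference is packaging: rather than building $S_0\to S_1\to\cdots$ inductively, the paper observes directly that $\widetilde{G}\times_G G[p^n]\cong\Spec\,\colim_{i\geq n}R_i$ (where $G_i=\Spec R_i$), so your $S_\infty$ is simply the pullback $f^*\widetilde{G}$ along the given point $f\colon\Spec S\to G[p^n]$.
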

\begin{proof}
We must verify that $\wt{G} \rightarrow G$ is surjective as a map of fpqc sheaves. Note that if $G_i = \Spec R_i$, then $\wt{G} \times_G G[p^n]$ is represented by $\Spec\,\colim_{i\geq n} R_i$, and the inclusion $R_n \rightarrow \colim R_i$ is an fpqc cover. Given an $S$-point $f: \Spec S \rightarrow \wt{G}$, which factors through $G[p^n]$ for some $n$, we find $\wt{S}=f^* \wt{G}$ is an fpqc cover of $\Spec S$ such that $f$ is in the image of $\wt{G}(\wt{S})$. 
\end{proof}
\begin{remark} 
Exactness at the right in (\ref{equation:universal-cover-exact}) typically fails in the fppf topology. For example, if $G=\mu_{p^\infty}$ and $R$ is finitely generated of characteristic $p$, then $\widetilde{\mu_{p^\infty}}(R)=1$.  Any fppf cover of such an $R$ is by finitely generated rings of characteristic $p$, thus $\widetilde{\mu_{p^\infty}}$ is the trivial sheaf on the small fppf site of $\Spec R$.  On the other hand, if $R$ contains any nilpotents (e.g. $R=k[\epsilon]/\epsilon^2$), then $\mu_{p^\infty}(R) \neq 1$, and thus the map $\widetilde{\mu_{p^\infty}} \rightarrow \mu_{p^\infty}$ is not surjective in the fppf topology. 
\end{remark}

\subsubsection{Crystalline nature of the universal cover}

Suppose $G_0$ is a $p$-divisible group over a ring $R$ in which $p$ is nilpotent, $R' \rightarrow R$ is a nilpotent thickening, and $G$ is a lift of $G_0$ to $R'$. Then, the reduction map
\[ \wt{G}(R') \rightarrow \wt{G_0}(R) \]
is an isomorphism: the inverse sends $(g_0, g_1, \ldots )$ to
$(g_0', g_1', \ldots)$ where $g_i'$ is defined to be $p^n (\tilde{g}_{i+n})$ for $n$ sufficiently large and any lift $\tilde{g}_{i+n} \in G(S)$ of $g_{i+n}$. These lifts exist by the formal smoothness of Theorem \ref{theorem:p-divisible-formally-smooth}, and the $p^n$th multiple is independent of this choice for $n$ sufficiently large by a lemma of Drinfeld \cite[Lemma 1.1.2]{katz:serre-tate}.

\subsection{The universal vector extension}

For $R$ in which $p$ is nilpotent, and $G/R$ a $p$-divisible group, we denote by $\bE(G)$ the universal vector extension of $G$,
\[ 1 \rightarrow \omega_{G^\vee} \rightarrow \bE(G) \rightarrow G \rightarrow 1. \]
There is a natural map $s_G: \wt{G} \rightarrow \bE(G)$ sending $(g_0, g_1, \ldots) \in \wt{G}(S)$ to $p^n g_n'$ for $n$ sufficiently large and $g_n'$ any lift of $g_n$ to $\bE(G)(S)$; this is well-defined since $\omega_{G^\vee}$ is annihilated by the same power of $p$ that annihilates $R$.  
\begin{remark}
From the construction of the universal vector extension in \cite{messing:crystals-associated-to-barsotti-tate}, we find that $\bE(G)$ is the push-out of the extension (\ref{equation:universal-cover-exact}) by the natural map $T_p G \rightarrow \omega_{G^\vee}$ sending $x$ to $x^* \frac{dt}{t}$ where we think of $x$ as a map from $G^\vee$ to $\wh{{\bbG_m}}$. Note that the map $T_p G \rightarrow \omega_{G^\vee}$ factors through $G[p^n]$ for $n$ sufficiently large (such that $p^n$ annihilates $R$ and thus $\omega_{G^\vee}$), so that $\bE(G)$ can be constructed as an fppf pushout (avoiding issues with fpqc sheafification in showing the pushout exists). These considerations lead to the following question: is there a natural topology suitable for constructions such as in the previous remark involving $T_pG$ and $\wt{G}$, but avoiding the set theoretic issues of the fpqc topology? 
\end{remark}

\subsubsection{Crystalline nature}
If $R' \rightarrow R$ is a nilpotent divided powers thickening, $G_0$ and $H_0$ are $p$-divisible groups over $R$, $G$ and $H$ are lifts of $G_0$ and $H_0$, respectively, to $R'$, and $\varphi: G_0 \rightarrow H_0$ is a morphism, then we obtain a morphism $\bE(\varphi)(R): \bE(G_0) \rightarrow \bE(H_0)$ by the universality of $\bE(G_0)$ (using that $\varphi^* \bE(H_0)$ is a vector extension of $G_0$). Messing \cite[Theorem  IV.2.2]{messing:crystals-associated-to-barsotti-tate} shows that there is a functorial lift
\[ \bE(G) \xrightarrow{\bE(\varphi)(R')} \bE(H). \]
By \cite[Lemma 3.2.2]{scholze-weinstein:p-div}, the following diagram commutes:
\begin{equation}\label{diagram:universal-covers-and-vector-extensions} \xymatrix{ 
\tilde{G}(R') \ar[dd]^{s_G} \ar[r]^{\sim} & \tilde{G_0}(R) \ar[d]^{s_{G_0}} \ar[r]^{\tilde{\varphi}} & \tilde{H_0}(R) \ar[d]^{s_{H_0}} \ar[r]^\sim & \tilde{H}(R') \ar[dd]^{s_H} \\ 
& \bE(G_0)(R) \ar[r]^{\bE(\varphi)(R)} & \bE(H_0)(R) & \\
\bE(G)(R') \ar[ur] \ar[rrr]^{\bE(\varphi)(R')} & & & \ar[ul] \bE(H)(R'). 
} 
\end{equation}

Passing to Lie algebras, we obtain a (nilpotent) crystal in locally free $\calO_{\crys}$-modules $\bD(G_0)$ whose value on a nilpotent divided powers thickening $R' \rightarrow R$ is $ \Lie \bE(G^\vee)$ where $G$ is any lift of $G_0$ to $R'$. This vector bundle is equipped with an integrable connection $\nabla_\crys$, and the assignment $G_0 \rightarrow (\bD(G_0)(R'), \nabla_\crys)$ is a contravariant functor: given $\varphi: G_0 \rightarrow H_0$ we obtain a map $\bD(H_0) \rightarrow \bD(G_0)$ from the construction $\bE(\varphi^\vee)$.  A specific choice of a lift $G$ gives rise to a Hodge filtration
\[ 0 \rightarrow \omega_{G} \rightarrow \Lie \bE(G^\vee) = \bD(G)(R') \rightarrow \Lie G^\vee \rightarrow 0. \]
In the remainder of this article we will usually be working with a fixed lift $G$, thus we avoid the notation $\bD$ and prefer to write $\Lie \bE(G^\vee)$ instead.

\subsection{An important example}
We now explain how to compute the maps in diagram (\ref{diagram:universal-covers-and-vector-extensions}) when $G_0=\bbQ_p/\bbZ_p$ and $H_0=\mu_{p^\infty}$. 

For $G=\bbQ_p / \bbZ_p$, $\wt{G}=\bbQ_p$. Then, $\bE(G) = \bbQ_p \times \bbG_a / \bbZ_p$, where we have identified $G^\vee$ with $\mu_{p^\infty}$ and $\omega_{G^\vee}$ with $\bbG_a$ using the basis $\frac{dt}{t}$, and 
$\bbZ_p$ is included anti-diagonally, i.e. by $z \mapsto (z,-z)$.  Here $s_{G}$ is the map $a \mapsto (a,0)$. 

For $H=\mu_{p^\infty}$, $\bE(H)=H$, and $s_{H}$ is the map 
\[ \wt{\mu_{p^\infty}} \rightarrow \mu_{p^\infty},\; (g_0, g_1, \ldots) \mapsto g_0.\] 

A map from $\QQ_p/\ZZ_p$ to $\mu_{p^\infty}$ over $R$ is an element $(g_0, g_1, \ldots) \in T_p \mu_{p^\infty}(R)$. Because $T_p \mu_{p^\infty}(R) \subset \wt{\mu_{p^\infty}}(R) \isoeq \wt{\mu_{p^\infty}}(R')$ and the latter is a $\bbQ_p$-vector space, it induces a map from $\QQ_p$ to $\wt{\mu_{p^\infty}}$. If we write $(g_0', g_1', \ldots)$ for the element of $\wt{\mu_{p^\infty}}(R')$ lifting $(g_0, g_1, \ldots)$ (i.e. the image of $1 \in \bbQ_p$), then the map 
\[ \bE(\bbQ_p/\bbZ_p)(R') \xrightarrow{\bE(\varphi)(R')} \bE(\mu_{p^\infty})(R')\]
is induced by the map
\[ \bbQ_p \times \bbG_a (R') \rightarrow \mu_{p^\infty}, (z, x) \mapsto \tilde{\varphi}(z)_0 \cdot \mathrm{exp}( x \log(g_0') ). \]
Here we have written $\tilde{\varphi}$ for the composition of the arrows at the top of the diagram (\ref{diagram:universal-covers-and-vector-extensions}) and the subscript $0$ to denote its zeroth component. The exponential and logarithm make sense because $g_0'$ is congruent to $1$ mod the kernel $I$ of $R' \rightarrow R$, which is a nilpotent divided powers ideal. Because $\mathrm{exp}( z \log(g_0') ) = (g_0')^z$ for $z \in \bbZ_p$, we find that the map is zero on the anti-diagonally embedded $\bbZ_p$. In particular, we deduce the following lemma, which we will use in our verification of Theorem \ref{maintheorem:big-hecke}.

\begin{lemma}\label{lemma:vector-extension-map}
Suppose $(g_0, g_1, \ldots)$ is an element of $\wt{\mu_{p^\infty}}(R)$ such that $g_0 \equiv 1 \mod I$ for a nilpotent divided powers ideal $I \subset R$. Then, the induced map
\[ \Lie \bE (\bbQ_p/\bbZ_p) = \bbG_a \cdot \frac{dt}{t} \rightarrow \bbG_a \cdot t \partial_t = \Lie \bE( \mu_{p^\infty})  \]
is multiplication by $\log g_0$. 
\end{lemma}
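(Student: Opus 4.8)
The plan is to read the lemma off by differentiating the explicit description of $\bE(\varphi)(R')$ assembled in the paragraph immediately above; beyond the commuting diagram~(\ref{diagram:universal-covers-and-vector-extensions}) and Messing's functorial lift, which are already built into that description, no further input is needed. First I would fix the morphism whose crystalline realization is at issue. Since $g_0 \equiv 1 \bmod I$, the image of $(g_0, g_1, \ldots)$ in $\widetilde{\mu_{p^\infty}}(R/I)$ has trivial zeroth component, so it lies in $T_p\mu_{p^\infty}(R/I) = \Hom_{R/I}(\bbQ_p/\bbZ_p, \mu_{p^\infty})$; call the corresponding morphism $\varphi$. (The element $\log g_0$, and the homomorphism $x \mapsto \exp(x\log g_0)$, are well-defined here because $I$ is a nilpotent divided powers ideal, as recalled above.) By the crystalline invariance of the universal cover, $(g_0, g_1, \ldots) \in \widetilde{\mu_{p^\infty}}(R)$ is precisely the canonical lift of $\varphi$ appearing in that discussion, so the map in the statement is $\Lie$ applied to the homomorphism of $R$-group schemes
\[ \bE(\bbQ_p/\bbZ_p) = (\bbQ_p \times \bbG_a)/\bbZ_p \longrightarrow \mu_{p^\infty} = \bE(\mu_{p^\infty}), \qquad (z, x) \longmapsto \widetilde{\varphi}(z)_0 \cdot \exp(x\log g_0). \]

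Next I would carry out the passage to Lie algebras in coordinates. Because the summand $\bbQ_p$ and the quotient $\bbQ_p/\bbZ_p$ are \'etale, taking $\Lie$ of the universal vector extension sequence $0 \to \bbG_a \to \bE(\bbQ_p/\bbZ_p) \to \bbQ_p/\bbZ_p \to 0$ identifies $\Lie\bE(\bbQ_p/\bbZ_p)$ with the subgroup $\bbG_a = \omega_{\mu_{p^\infty}}$ trivialized by $\frac{dt}{t}$; hence a section $b\cdot\frac{dt}{t}$ corresponds to the $R[\epsilon]/\epsilon^2$-point $(0, \epsilon b)$. Plugging this into the displayed formula, and using $\widetilde{\varphi}(0) = 1$ together with $(\epsilon\log g_0)^2 = 0$, gives the point $1 + \epsilon\, b\log g_0$ of $\mu_{p^\infty}(R[\epsilon]/\epsilon^2)$; under the identification $\Lie\bE(\mu_{p^\infty}) = \Lie\mu_{p^\infty} = \bbG_a\cdot t\partial_t$ this is the section $(b\log g_0)\cdot t\partial_t$. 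Thus $b\cdot\frac{dt}{t} \mapsto (b\log g_0)\cdot t\partial_t$, that is, the map is multiplication by $\log g_0$, as asserted.

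I do not anticipate a genuine obstacle: the essential content — the compatibility among universal covers, universal vector extensions, and Messing's canonical lifts recorded in~(\ref{diagram:universal-covers-and-vector-extensions}) — has already been spent above in producing the explicit formula for $\bE(\varphi)(R')$. What remains is bookkeeping, and the only things one has to keep straight are the identifications of the two one-dimensional Lie algebras with $\bbG_a$ via the bases $\frac{dt}{t}$ and $t\partial_t$, and the chasing of the outer reduction isomorphisms in~(\ref{diagram:universal-covers-and-vector-extensions}) so that, for $z = 1$, the symbol $\widetilde{\varphi}(z)_0$ really denotes the zeroth component $g_0$.
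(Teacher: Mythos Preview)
Your proposal is correct and follows exactly the paper's approach: the paper presents the lemma as an immediate consequence (``In particular, we deduce the following lemma'') of the explicit formula $(z,x)\mapsto\widetilde{\varphi}(z)_0\cdot\exp(x\log g_0')$ for $\bE(\varphi)(R')$ derived in the preceding paragraph, and what you have written is precisely the routine differentiation of that formula that the paper leaves implicit. Your bookkeeping of the notational shift (the lemma's $R$ and $I$ playing the roles of the preceding discussion's $R'$ and $\ker(R'\to R)$) is accurate.
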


\subsection{Comparing the Gauss-Manin and crystalline connections}\label{ss:comparing-gm-and-crys}\newcommand{\GM}{\mathrm{GM}}
Let $S$ be a scheme where $p$ is locally nilpotent, let $\pi: A \rightarrow S$ be an an abelian scheme, and write $A^\vee$ for the dual abelian scheme. We have the relative de Rham cohomology 
\[ V_\dR: = R^1 \pi_* \Omega_{A/R}^\bullet \]
with Hodge filtration
\[ 0 \rightarrow \omega_{A} \rightarrow V_\dR \rightarrow \Lie A^\vee \rightarrow 0. \]
We also have the universal extension of $\bE(A[p^\infty]^\vee)=\bE(A^\vee[p^\infty])$,
\[ 1 \rightarrow \omega_{A} \rightarrow \bE(A[p^\infty]^\vee) \rightarrow A[p^\infty]^\vee \rightarrow 1 \]
and the induced Hodge filtration on $\Lie EA[p^\infty]^\vee$
\[ 0 \rightarrow \omega_{A} \rightarrow \Lie \bE(A[p^\infty]^\vee)\rightarrow \Lie A^\vee \rightarrow 0 \]
(note we have identified $\omega_{A}$ with $\omega_{A[p^\infty]}$ and $\Lie A^\vee$ with $\Lie A^\vee[p^\infty]=\Lie A[p^\infty]^\vee$ via the natural maps). 

Now, $V_\dR$ is equipped with the Gauss-Manin connection $\nabla_{\GM}$, and $\Lie A^\vee[p^\infty]$ is equipped with a connection $\nabla_{\crys}$ via the crystalline nature of the universal vector extension. The work of Mazur-Messing \cite{mazur-messing} shows
\begin{theorem}\label{theorem:vector-extension-and-gauss-manin}
There is a functorial isomorphism of filtered vector bundles with connection
\[ (\Lie \bE(A[p^\infty]^\vee), \nabla_\crys) \cong (V_\dR, \nabla_\GM). \]
inducing the identity on the associated graded bundles for the Hodge filtrations. 
\end{theorem}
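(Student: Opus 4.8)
The plan is to deduce this from the work of Mazur-Messing \cite{mazur-messing}, organized so as to pass through the $p$-divisible group. I first treat the underlying filtered vector bundles. Mazur-Messing identify $V_\dR = H^1_\dR(A/S)$ canonically with $\Lie\bE(A^\vee)$, the Lie algebra of the universal vector extension of the dual abelian scheme, in such a way that the Hodge filtration $0 \to \omega_A \to V_\dR \to \Lie A^\vee \to 0$ is carried to the filtration $0 \to \omega_A \to \Lie\bE(A^\vee) \to \Lie A^\vee \to 0$ coming from the vector extension, inducing the identity on the associated graded bundles. Since $p$ is locally nilpotent on $S$, the natural map $(A[p^\infty])^\vee = A^\vee[p^\infty] \to A^\vee$ induces, after applying $\bE$ and $\Lie$ and using the canonical identifications $\omega_{A[p^\infty]} = \omega_A$ and $\Lie A^\vee[p^\infty] = \Lie A^\vee$ recorded above, an isomorphism $\Lie\bE(A[p^\infty]^\vee) \xrightarrow{\sim} \Lie\bE(A^\vee)$ of filtered bundles respecting the gradeds. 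Composing yields the filtered-bundle isomorphism of the theorem together with the assertion about associated gradeds.

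It then remains to promote this to an isomorphism of bundles \emph{with connection}. The key point is that each of $\nabla_\crys$ and $\nabla_\GM$ is the connection produced by a crystal upon evaluation at the trivial divided powers thickening: on the left, $\nabla_\crys$ is by construction the connection of the Messing crystal $\bD(A[p^\infty]^\vee)$, whose defining feature is Messing's functorial lifting of universal vector extensions along nilpotent divided powers thickenings \cite[IV.2.2]{messing:crystals-associated-to-barsotti-tate} (cf.\ diagram (\ref{diagram:universal-covers-and-vector-extensions})); on the right, $\nabla_\GM$ is the connection arising from the crystal $R^1\pi_{\crys*}\calO$ under the crystalline--de Rham comparison. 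So it suffices to check that the filtered-bundle isomorphism of the previous paragraph is compatible with the descent data along divided powers thickenings on both sides, i.e.\ is an isomorphism of crystals; this is exactly what Mazur-Messing prove, namely that the universal vector extension of the dual of an abelian scheme, equipped with its canonical lifts, computes the first crystalline cohomology as a crystal. Functoriality of the resulting isomorphism in $A$ is inherited from the functoriality of each of these constructions, as in the discussion preceding diagram (\ref{diagram:universal-covers-and-vector-extensions}).

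The main obstacle is this last compatibility of connections. Gauss-Manin is built from the de Rham complex and the filtration of $\Omega^\bullet_A$ by $\pi^*\Omega^1_S$, whereas $\nabla_\crys$ is built from the functorial lifting of universal vector extensions along divided powers thickenings, and bridging the two requires the crystalline comparison theorem for abelian schemes, that $(V_\dR, \nabla_\GM)$ is the de Rham fiber of the crystal $R^1\pi_{\crys*}\calO$, together with the crystal-theoretic identification of $\bD$ with that crystalline cohomology. Both are theorems of Mazur-Messing (for the reformulation in terms of $p$-divisible groups one may also appeal to the crystalline Dieudonn\'{e} theory of Berthelot-Breen-Messing building on \cite{messing:crystals-associated-to-barsotti-tate}), and since we are quoting the statement from the literature we invoke these results rather than reprove them.
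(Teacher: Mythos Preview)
Your proposal is correct and takes essentially the same approach as the paper: both arguments are reductions to the results of Mazur--Messing \cite{mazur-messing}. The paper's proof is terser, routing the identification explicitly through $\Lie\,\mathrm{Extrig}(A,\bbG_m)$ and citing \cite[I.2.6]{mazur-messing} for the filtered-bundle identification with $\Lie\bE(A[p^\infty]^\vee)$ and \cite[II.1]{mazur-messing} (with functoriality from \cite[II.1.6]{mazur-messing}) for the compatibility with connections, whereas you unpack the same content by first passing through $\Lie\bE(A^\vee)$ and then invoking the crystalline--de Rham comparison; the substance is the same.
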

\begin{proof} The identity between $\Lie \bE(A[p^\infty]^\vee)$ with its Hodge filtration as constructed above and $\Lie \mathrm{Extrig}(A, \bbG_m)$ follows from the discussion of \cite[I.2.6]{mazur-messing}. The stated isomorphism then follows from the results of \cite[II.1]{mazur-messing}; in particular, the functoriality follows from \cite[II.1.6]{mazur-messing}. 
\end{proof}

\subsubsection{Connections and vector fields}\label{sss:connections-and-vector-fields}
In preparation for our application of Theorem \ref{theorem:vector-extension-and-gauss-manin}, we now recall the relation between some different perspectives on connections. We write $D=\Spec \bbZ[\epsilon]/\epsilon^2$, the dual numbers.

Given a vector bundle with connection $(V, \nabla)$ over $S$, and a vector field $t$, viewed as a map 
\[ t: D \times S \rightarrow S, \]
we obtain an isomorphism of vector bundles on $D \times S$
\[ \nabla_t: t^* V_\dR \rightarrow 0^* V_\dR \]
where $0$ is the zero vector field. It will be useful to make this isomorphism explicit when $S=\Spec R$ and $M$ is the $R$-module of sections of $V$ over $\Spec R$. Then the map $t$ is given by 
\begin{align*}
\alpha_t:R &\rightarrow R[\epsilon] \\
r &\mapsto r + dr(t)\epsilon 
\end{align*}
and the zero section is given by
\begin{align*}
\alpha_{0}:R &\rightarrow R[\epsilon] \\
r &\mapsto r  
\end{align*}
The isomorphism $\nabla_t$ is then given in coordinates by
\begin{eqnarray}
\label{equation:connection-isomorphism}
\nabla_t: R[\epsilon] \otimes_{\alpha_t} M \rightarrow R[\epsilon] \otimes_{\alpha_0} M \\
1 \otimes m \mapsto 1 \otimes m + \epsilon \otimes \nabla_t(m). 
\end{eqnarray}
where by abuse of notation we have also written $\nabla_t$ for the derivation $M \rightarrow M$ associated to $t$ by $\nabla$.

\subsection{Serre-Tate lifting theory}\label{ss:serre-tate}
\newcommand{\Ell}{\mathrm{Ell}}
\newcommand{\Def}{\mathrm{Def}}

For $R$ a ring in which $p$ is nilpotent, and ${R_0 = R/I}$ for $I$ a nilpotent ideal, let 
\[ \Def(R,R_0) \]
be the category of triples
\[ (E_0, G, \epsilon) \]
where $E_0/R_0$ is an elliptic curve, $G$ is a $p$-divisible group, and $\epsilon: G|_{R_0} \xrightarrow{\sim} E_0[p^\infty]$ is an isomorphism. 

We denote by $\Ell(R)$ the category of elliptic curves over $R$. There is a natural functor from $\Ell(R)$ to $\Def(R,R_0)$ 
\begin{equation} \label{eqn:DefEq} E \mapsto (E_{R_0}, E[p^\infty], \epsilon_E) \end{equation}
where $\epsilon_E$ is the canonical isomorphism
\[ E[p^\infty]_{R_0} \xrightarrow{\sim} E_{R_0}[p^\infty]. \]

The following result is due to Serre-Tate, cf. \cite[Theorem 1.2.1]{katz:serre-tate}:
\begin{theorem} \label{thm:STL}
The functor \ref{eqn:DefEq} is an equivalence of categories. 
\end{theorem}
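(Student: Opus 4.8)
The plan is to run the standard crystalline deformation-theoretic proof of Serre--Tate, in which the lifting problem for an elliptic curve and the lifting problem for its $p$-divisible group are both seen to be governed by the same filtration datum; the tools are Grothendieck--Messing theory and the comparison of Theorem~\ref{theorem:vector-extension-and-gauss-manin}. First I would reduce to the case $I^2 = 0$. Since $I$ is nilpotent, fix $n$ with $I^n = 0$ and induct on $n$ using the tower $R \to R/I^{n-1} \to R/I$: for $n \geq 2$ the kernel $I^{n-1}$ of the first map is square-zero, while $R/I^{n-1} \to R/I$ has nilpotent kernel of smaller nilpotence index. Granting the theorem for $(R/I^{n-1}, R/I)$, one uses it to expand the $R/I^{n-1}$-elliptic curve appearing in an object of $\Def(R, R/I^{n-1})$ into a triple over $(R/I^{n-1}, R/I)$; a bookkeeping check then identifies $\Def(R, R/I^{n-1})$ with $\Def(R, R/I)$ compatibly with \ref{eqn:DefEq}, reducing everything to the square-zero case.

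Now assume $I^2 = 0$ and set $R_0 = R/I$. Then $I$ carries the trivial (PD-nilpotent) divided power structure, so $R \to R_0$ is a nilpotent PD thickening and the formalism of Section~\ref{section:p-divisible-groups}---the crystals $\bD$, their Hodge filtrations, and Theorem~\ref{theorem:vector-extension-and-gauss-manin}---is available. By Grothendieck--Messing theory for $p$-divisible groups \cite{messing:crystals-associated-to-barsotti-tate}, the category of lifts of $G_0 := E_0[p^\infty]$ to $R$ is equivalent, via $G \mapsto \omega_G$, to the category of submodules of $\bD(G_0)(R)$ that are locally direct summands and reduce modulo $I$ to $\omega_{G_0} \subset \bD(G_0)(R_0)$; by Grothendieck--Messing for abelian schemes, the lifts of $E_0$ to $R$ are classified the same way by such submodules of $H^1_{\dR}(E_0/R_0) \otimes_{R_0} R$ reducing to $\omega_{E_0}$. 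Theorem~\ref{theorem:vector-extension-and-gauss-manin}, applied to $E_0/R_0$ along $R \to R_0$, supplies a canonical isomorphism $H^1_{\dR}(E_0/R_0) \otimes_{R_0} R \cong \bD(G_0)(R)$ identifying the two Hodge filtrations and functorial in $E_0$; so the two classifying categories agree, and unwinding the constructions shows this identification is implemented precisely by $E \mapsto (E_{R_0}, E[p^\infty], \epsilon_E)$. Essential surjectivity and fullness of \ref{eqn:DefEq} follow at once: a triple $(E_0, G, \epsilon)$ is, through $\epsilon$, a lift of $E_0[p^\infty]$, hence a lift of the Hodge filtration, hence a lift $E/R$ of $E_0$ with $E[p^\infty] \cong G$ compatibly with $\epsilon$; and a $\Def(R,R_0)$-morphism between the images of two curves $E, E'$ is a pair $(f_0 \colon E_{R_0} \to E'_{R_0}, \ f_p \colon E[p^\infty] \to E'[p^\infty])$ with $f_p$ lifting $f_0[p^\infty]$, to which the filtration criterion applied to $\bD(f_0)$ associates the required (necessarily unique) lift $f \colon E \to E'$. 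Faithfulness is elementary and separate: $\Hom_R(E, E') \hookrightarrow \Hom_R(E[p^\infty], E'[p^\infty])$, since a nonzero homomorphism of elliptic curves is an isogeny and so has finite kernel, which cannot contain $E[p^n]$ for all $n$.

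I expect the one genuinely substantial ingredient to be Grothendieck--Messing for abelian schemes as used above, i.e., that (first-order, hence by the dévissage all nilpotent) deformations of $E_0$ are classified by lifts of the Hodge filtration in de Rham cohomology. For elliptic curves this black box can be replaced by a direct computation: deformations of $E_0$ along $I$ form a torsor under $H^1(E_0, \mathcal{T}_{E_0/R_0} \otimes_{\mathcal{O}_{E_0}} \pi^\ast I) \cong \Hom_{R_0}(\omega_{E_0}, \Lie E_0^\vee) \otimes_{R_0} I$ (Kodaira--Spencer and Serre duality on $E_0$, with $\pi$ the structure map), while deformations of $E_0[p^\infty]$ form a torsor under the same module $\Hom_{R_0}(\omega_{E_0}, \Lie E_0^\vee) \otimes_{R_0} I$ by the Grothendieck--Messing description recalled above; one then checks the two torsor structures agree under $E \mapsto E[p^\infty]$ using the crystalline nature of the universal vector extension together with Theorem~\ref{theorem:vector-extension-and-gauss-manin}. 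Once the two lifting problems have been matched, the equivalence of categories is a formal consequence, so all the content is concentrated in this one comparison; one may of course instead simply invoke \cite[Theorem~1.2.1]{katz:serre-tate}.
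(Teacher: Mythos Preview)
The paper does not supply a proof; it records the statement and cites \cite[Theorem~1.2.1]{katz:serre-tate}. Your argument is correct, but it follows a different route from that reference. Katz's exposition presents Drinfeld's elementary proof, based on formal smoothness of $p$-divisible groups (Theorem~\ref{theorem:p-divisible-formally-smooth}) and the rigidity lemma that, over a nilpotent thickening, multiplication by a sufficiently large power of $p$ kills the kernel of reduction on $G(R)$; from this one builds an explicit quasi-inverse without any crystalline machinery. Your approach instead matches the two lifting problems through their common classification by lifts of the Hodge filtration, importing Grothendieck--Messing theory and the crystalline--de Rham comparison of Theorem~\ref{theorem:vector-extension-and-gauss-manin}. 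This is heavier but conceptually natural in the context of this paper, since the same crystal-theoretic apparatus is already in play throughout Section~\ref{section:p-divisible-groups}. One notational quibble: what you write as $H^1_{\dR}(E_0/R_0) \otimes_{R_0} R$ is really the value on the thickening $R$ of the de Rham crystal of $E_0$, which is not canonically a base change from $R_0$; the argument is unaffected once the symbol is read that way.
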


\section{Extensions of $\bbQ_p/\bbZ_p$ by $\mu_{p^\infty}$}\label{section:extensions}

In this section we study extension of $\bbQ_p/\bbZ_p$ by $\mu_{p^\infty}$. In particular, we recall a construction from \cite[8.7]{katz-mazur} of extensions which we call \emph{Kummer} $p$-divisible groups, and prove our generalization of Dwork's equation ${\tau=\log q}$ (Theorem \ref{theorem:dworkequation} below).

\subsection{The canonical trivialization}\label{subsection:the-canonical-trivialization}
Suppose given an extension of $p$-divisible groups
\[ \calE: 1 \rightarrow \mu_{p^\infty} \rightarrow G \rightarrow \bbQ_p/\bbZ_p \rightarrow 1 \]
over a scheme $S$ where $p$ is locally nilpotent. The inclusion $\mu_{p^\infty} \rightarrow G$ induces an isomorphism $\omega_G = \omega_{\mu_{p^\infty}}$, and we denote by $\omega_\can$ the image of $\frac{dt}{t}$ in $\omega_G$. The map $G \rightarrow \bbQ_p/\bbZ_p$ induces an injection
\[ \Lie \bE\l((\bbQ_p/\bbZ_p)^\vee\r) = \Lie \bE (\mu_{p^\infty}) = \Lie \mu_{p^\infty} \rightarrow \Lie \bE(G^\vee). \]
The image is the unit root filtration, which splits the Hodge filtration; we write $u_\can$ for the image of $t \partial_t \in \Lie {\mu_{p^\infty}}$ in $\Lie \bE(G^\vee)$.  

We thus obtain a trivialization 
\begin{equation}\label{equation:extension-vb-canonical-basis} \Lie \bE(G^\vee) = \bbG_a \cdot \omega_\can \times \bbG_a \cdot u_\can \end{equation}
where the first term spans the Hodge filtration and the second the unit root filtration. The elements $t \partial_t$ and $\frac{dt}{t}$ are flat for the connections on $\Lie \bE\l((\bbQ_p/\bbZ_p)^\vee\r)$ and $\Lie \bE(\mu_{p^\infty}^\vee)$, respectively, and thus we find that in the basis (\ref{equation:extension-vb-canonical-basis}), $\nabla_\crys$ is lower nilpotent, i.e.
\[ \nabla_\crys(\omega_\can) \in u_\can \cdot \Omega_S, \; \nabla_\crys(t \partial_t) = 0. \]
In particular, the extension $\calE$ determines a differential form
\[ d\tau_{\calE} := \frac{ \nabla_\crys(\omega_\can)}{u_\can} \in \Omega_S. \]
The notation is a slight abuse, as in general there is no function $\tau_{\calE}$ in $\calO(S)$ whose differential is equal to $d\tau_{\calE}$; nevertheless, as we will see below, it is natural to think of this as the differential of Dwork's divided powers coordinate $\tau$. 
  
\subsection{Kummer $p$-divisible groups}\label{subsection:kummer-p-divisible-groups}

For $R$ a ring and $q \in R^\times$, we
will construct an extension of $p$-divisible groups over $\Spec R$, 
\[ \calE_q: 1 \rightarrow \mu_{p^\infty} \rightarrow G_q \rightarrow \bbQ_p/ \bbZ_p \rightarrow 1. \]
We call the extensions $\calE_q$ arising from this construction \emph{Kummer $p$-divisible groups} (for reasons explained below in Remark \ref{remark:kummer-name-explanation}). This construction is due to Katz-Mazur \cite[8.7]{katz-mazur} (who work in the univeral case over $\bbZ[q,q^{-1}]$), but because it will be useful later we give the details and some complements below. 

\newcommand{\Roots}{\mathrm{Roots}}
We first consider the fppf sheaf in groups
\[ \Roots_q \subset \GG_m \times \ZZ[1/p] \]
consisting of pairs $(x, m)$ such that for $k$ sufficiently large, $x^{p^k} = q^{p^k m}$.

Projection to the second component gives a natural map $\Roots_q \rightarrow \bbZ[1/p]$. The kernel is identified with $\mu_{p^\infty}$, and the projection admits a canonical section over $\bbZ$ by $1 \mapsto (q,1)$. We consider the quotient by the image of this section
\[ G_q:=\Roots_q / \ZZ. \]

\begin{lemma} $G_q$ is a $p$-divisible group, and the maps 
\[ \mu_{p^\infty} \rightarrow \Roots_q \textrm{ and } \Roots_q \rightarrow \bbZ[1/p] \]
induce the structure of an extension
\[ \calE_q: 1\rightarrow \mu_{p^\infty} \rightarrow G_q[p^\infty] \rightarrow \bbQ_p/ \bbZ_p \rightarrow 1 \]
\end{lemma}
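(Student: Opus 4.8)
The plan is to verify directly that $G_q$ is a $p$-divisible group and that the displayed sequence is exact, working with the explicit presentation $G_q = \Roots_q/\ZZ$. First I would pin down $\Roots_q$ more concretely: writing $\Roots_q \subset \GG_m \times \ZZ[1/p]$, for each $n \geq 0$ the preimage of $\frac{1}{p^n}\ZZ/\ZZ$-type data is governed by the equation $x^{p^k} = q^{p^k m}$, so that after the quotient by the section $1 \mapsto (q,1)$ the $p^n$-torsion $G_q[p^n]$ is represented by the scheme of pairs $(x,m)$ with $m \in \frac{1}{p^n}\ZZ/\ZZ$ and $x^{p^n} = q^{p^n m}$ (a well-defined element of $R$ since $p^n m \in \ZZ$). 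Concretely $G_q[p^n]$ sits in an exact sequence $1 \to \mu_{p^n} \to G_q[p^n] \to \ZZ/p^n \to 1$, and as a scheme it is a disjoint union, indexed by $m \in \ZZ/p^n$, of $\mu_{p^n}$-torsors (the torsor over the component $m$ being $\{x : x^{p^n} = q^{p^n m}\}$, nonempty because $q^m$ — or rather $q^{p^n m}$, an honest element of $R^\times$ — provides a point after passing to an fppf cover extracting $p^n$-th roots). Hence each $G_q[p^n]$ is finite locally free of order $p^{2n}$ over $R$.

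Next I would check the two structural conditions. Finiteness and local freeness of order $p^{2n}$ follows from the torsor description just given, since an fppf-locally trivial $\mu_{p^n}$-torsor over a base is finite locally free of the same rank as $\mu_{p^n}$, and there are $p^n$ components. For the inductive-system structure: the inclusion $\iota_n : G_q[p^n] \hookrightarrow G_q[p^{n+1}]$ is the evident one coming from $\frac{1}{p^n}\ZZ/\ZZ \subset \frac{1}{p^{n+1}}\ZZ/\ZZ$, and one must check it identifies $G_q[p^n]$ with $\ker(p^n : G_q[p^{n+1}] \to G_q[p^{n+1}])$. This is a diagram chase with the snake lemma applied to multiplication by $p^n$ on the extension $1 \to \mu_{p^{n+1}} \to G_q[p^{n+1}] \to \ZZ/p^{n+1} \to 1$: the kernel of $p^n$ on $\mu_{p^{n+1}}$ is $\mu_{p^n}$, on $\ZZ/p^{n+1}$ is $\frac{1}{p^n}\ZZ[1/p]/\ZZ \cong \ZZ/p^n$ (the image of $p^n$ being the connecting-map term, which vanishes here because the extension of constant groups — the reduction — and more importantly the torsor description shows the sequence is exact on $p^n$-torsion with no further correction), giving exactly $G_q[p^n]$ in the middle by comparing orders. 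So $(G_q[p^n], \iota_n)$ is a $p$-divisible group of height $2$.

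Finally, the extension structure: the map $\mu_{p^\infty} \to \Roots_q$ (kernel of projection to $\ZZ[1/p]$) descends to $\mu_{p^\infty} \to G_q[p^\infty]$ since $\mu_{p^\infty}$ meets the image of the section $\ZZ \hookrightarrow \Roots_q$ only in the identity, and this is a closed immersion because it is so level-by-level. The projection $\Roots_q \to \ZZ[1/p]$ descends to $G_q[p^\infty] \to \ZZ[1/p]/\ZZ = \QQ_p/\ZZ_p$ and is faithfully flat (fppf-locally split, by the torsor triviality above). Exactness of $1 \to \mu_{p^\infty} \to G_q[p^\infty] \to \QQ_p/\ZZ_p \to 1$ as fppf sheaves then follows from the level-$n$ statements established in the previous paragraph by passing to the colimit.

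\textbf{Main obstacle.} The genuinely delicate point is the torsion-level bookkeeping — showing that after quotienting $\Roots_q$ by the $\ZZ$-section the group $G_q[p^n]$ really has order $p^{2n}$ and that the connecting homomorphism in the snake lemma for multiplication by $p^n$ vanishes (equivalently, that the fppf-local section $1 \mapsto (q,1)$ does not accidentally split off extra torsion). Everything else — local freeness, closedness of the sub, flatness of the quotient map — is formal once the torsor picture for each component $\{x^{p^n} = q^{p^n m}\}$ is in hand, since $q \in R^\times$ guarantees these are nonempty fppf-locally and hence genuine $\mu_{p^n}$-torsors.
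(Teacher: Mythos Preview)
Your proof is correct and follows essentially the same route as the paper: both identify $G_q[p^n]$ explicitly as the disjoint union $\bigsqcup_{0 \le a < p^n} \Spec R[x]/(x^{p^n} - q^a)$, from which finite-flatness of order $p^{2n}$ is immediate. The paper obtains this description by first observing that the subset $\Roots_q' = \{(x,m) \in \Roots_q : 0 \le m < 1\}$ furnishes a set-theoretic section of $\Roots_q \to G_q$, giving canonical representatives and making the torsion computation transparent; it then verifies the equivalent axioms ``$G_q = \colim G_q[p^k]$, multiplication by $p$ is an fppf epimorphism, $G_q[p]$ is finite flat'' rather than checking the kernel condition directly. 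Your snake-lemma verification of the kernel condition is fine, but the justification you give for the vanishing of the connecting map is vague: the clean reason is that for a local lift $(x, a/p^n) \in G_q[p^{n+1}]$ one computes $p^n \cdot (x, a/p^n) = x^{p^n} q^{-a}$ in $\mu_{p^{n+1}}$, and this already lies in $\mu_p$ since $(x^{p^n} q^{-a})^p = x^{p^{n+1}} q^{-pa} = 1$.
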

\begin{proof}
If we let $\Roots_q'$ be the subsheaf of $\Roots_q$ of elements $(x,m)$ with $m \in \bbZ[1/p], 0 \leq m <1$, then the group law induces an isomorphism 
\[ \Roots_q' \times \bbZ \rightarrow \Roots_q.\]
Thus, $\Roots_q'$ as a sheaf of sets is isomorphic to $\Roots_q / \bbZ$, and for $A$ an $R$-algebra with $\Spec A$ connected,
\[ G_q(A)=\Roots_q(A) / (q,1)^{\bbZ} \]
and any element of $G_q(A)$ has a unique representative of the form $(x, m) \in \Roots_q(A)$ with $0 \leq m < 1$. Such an element is $p^k$-torsion if and only if $m \in 1/p^k \bbZ$ and $x^{p^k}=q^{p^k m}$. In particular, we find that $G_q = \colim G_q[p^k]$. Moreover, multiplication by $p$ is an epimorphism because taking a $p$th root of $x$ gives an fppf cover. Thus, to see that $G_q$ is a $p$-divisible group, it remains only to see that $G_q[p]$ is a finite flat group scheme. In fact, for any $k$, our description of elements shows that $G_q[p^k]$ is represented by
\[  \sqcup_{0\leq a \leq p^{k}-1} \Spec  R[x]/(x^{p^k} - q^a), \]
with multiplication given by ``carrying," i.e. for $x_1$ a root of $q^{a_1}$ and $x_2$ a root of $q^{a_2}$, in the group structure
\[ x_1 \cdot x_2 = \begin{cases} x_1 x_2  &\textrm{ as a root of $q^{a_1+a_2}$ if $a_1 + a_2 < p^k$} \\
x_1 x_2 / q & \textrm{ as a root of $q^{a_1+a_2 - p^k}$ if $a_1 + a_2 \geq p^k$}. \end{cases} \]
This is a finite flat group scheme.

Finally, the extension structure is clear from definition. 
\end{proof}

\begin{remark}\label{remark:pairing}
Let $\Roots_{q,k} \subset \Roots_q$ be the elements $(x,m)$ such that $p^k m \in \bbZ$ and $x^{p^k}=q^{p^k m}$, so that $\Roots_{q,k}/\bbZ = G_q[p^k].$ We have a natural pairing
\[ \Roots_{q,k} \times \Roots_{q^{-1},k} \rightarrow \mu_{p^k} \]
given by $\langle (g,a),\, (h,b) \rangle = g^{p^k b} h^{p^k a}$, which induces a perfect pairing
\[ G_q[p^k] \times G_{q^{-1}}[p^k] \rightarrow \mu_{p^k}. \]
It realizes an isomorphism of extensions
\[ \calE_q^\vee \xrightarrow{\sim} \calE_{q^{-1}} \]
Note that at the level of groups $G_q \isoeq G_{q^{-1}}$; the extension structures $\calE_{q}$ and $\calE_{q^{-1}}$  differ by composition with an inverse on either $\bbQ_p/\bbZ_p$ or $\mu_{p^\infty}$.   
\end{remark}

\begin{example}\label{example:kummer-groups} The following three examples will be useful later on:
\begin{enumerate}
\item For the Tate curve $\Tate(q)$ over $\ZZ((q))$, $\Tate(q)[p^\infty]=G_q[p^\infty]$ (\cite[8.8]{katz-mazur}). 
\item For $A$ an Artin local ring with perfect residue field $k$ of characteristic $p$, any lift of the trivial extension $\mu_{p^\infty} \times \QQ_p/\ZZ_p$ over $k$ to $A$ is uniquely isomorphic to $\calE_q$ for a unique $q \in \widehat{\GG}_m(A)$, and $q^{-1}$ is the Serre-Tate coordinate of the lift (cf. Remark \ref{remark:alternate-construction} below). 
\item 
The formation of $\calE_q$ commutes with base change. In particular, there is a universal Kummer $p$-divisible group,
\[ \calE_{q_\univ} / \bbG_{m,\bbZ} = \Spec \bbZ[q_\univ^{\pm 1}], \]
so that for any $q \in R^\times$, $\calE_q / \Spec R$ is given via pullback of $\calE_{q_\univ}$ through the map $\Spec R \rightarrow \bbG_m$ given by $q \in R^\times = \bbG_m(R)$. 

\end{enumerate}
\end{example}

\begin{remark}\label{remark:kummer-extensions-funny}
Over a general $R$, not every extension of $\bbQ_p/\bbZ_p$ by $\mu_{p^\infty}$ is a Kummer $p$-divisible group, and for those which are, there may not be a canonical choice of $q$ as in the Artin local case. In particular, the extension given by the $p$-divisible group of the universal trivialized elliptic curve over $\VV_{\Katz, \bbF_p}$ is not a Kummer $p$-divisible group, as we explain in \ref{subsec:not-kummer} below. 
\end{remark}

\begin{remark}\label{remark:kummer-name-explanation}
For any $k \geq 0$, consider the Kummer sequence
\[ 1 \rightarrow \mu_{p^k} \rightarrow \bbG_m \xrightarrow{x \mapsto x^{p^k}} \bbG_m \rightarrow 1 \]
We may take the pull-back by 
\[ \ZZ \mapsto \bbG_m, \; 1 \mapsto q \]
to obtain an extension
\[ 1 \rightarrow \mu_{p^k} \rightarrow p^k-\Roots_{q} \rightarrow \bbZ \rightarrow 1. \]
Equivalently, this extension is the image of $q$ under the coboundary map
\[ \bbG_m(R) \rightarrow H^1_{\fppf}(\Spec R, \mu_{p^k})=\Ext^1(\bbZ, \mu_{p^k}). \]
There is a natural map 
\[ p^k-\Roots_{q} \rightarrow \Roots_q. \]
Indeed, an element of $p^k-\Roots_{q}$ is a pair $(x, a) \in \bbG_m \times \bbZ$ such that $x^{p^k}=q^a$, and this is mapped to the pair
\[ (x, a/p^k) \in \bbG_m \times \bbZ[1/p] \]
which lies in $\Roots_q$. This is an isomorphism of $p^k-\Roots_{q}$ onto its image, which consists of all $(x,m)$ such that $m \in \frac{1}{p^k} \bbZ$ and $x^{p^k} = q^{p^k m}$ -- this is what we denoted by $\Roots_{q,k}$ in Remark \ref{remark:pairing}. In particular, the map $\Roots_q \rightarrow G_q$ induces an isomorphism
\[ p^k-\Roots_q / (q,p^k)^\bbZ \xrightarrow{\sim} G_q[p^k]. \]   
It is for this reason that we refer to $\calE_q$ as a Kummer $p$-divisible group. 

Note that there are also natural maps between the Kummer sequences as $k$ varies inducing the obvious inclusions as sub-functors of $\Roots_q$, and we find 
\[ \Roots_q = \colim_k p^k-\Roots_{q}. \]
To construct $G_q$ we can also take the colimit already at the level of the Kummer sequences. If we do so, we obtain the (exact) exponential sequence 
\begin{equation*}\label{eqn:exponential}
\calE_{\mathrm{exp}}: \; 1 \rightarrow
\mu_{p^\infty}\rightarrow \bbG_m \rightarrow \colim \left( \bbG_m \xrightarrow{x \mapsto x^p} \bbG_m \xrightarrow{x \mapsto x^p} \ldots \right) \rightarrow 1.  
\end{equation*}
There is a map 
\[ \alpha: \ZZ \rightarrow \bbG_m \]
sending $1$ to $q$ which extends uniquely to a map
\[ \alpha_{1/p}: \ZZ[1/p] \rightarrow \colim \left( \bbG_m \xrightarrow{x \mapsto x^p} \bbG_m \xrightarrow{x \mapsto x^p} \ldots \right). \] 
Then, essentially by definition, $\alpha_{1/p}^* \calE_{\mathrm{exp}}$ is the extension
\[ 1 \rightarrow \mu_{p^\infty} \rightarrow \Roots_q \rightarrow \bbZ[1/p] \rightarrow 1. \]
The map $\alpha \times \Id: \bbZ \rightarrow \bbG_m \times \bbZ[1/p]$ factors through $\alpha_{1/p}^* \calE_{\mathrm{exp}}$ and we find 
\[ \calE_q = \alpha_{1/p}^* \calE_{\mathrm{exp}} / \alpha \times \Id (\bbZ). \]

\end{remark}

\begin{remark}\label{remark:alternate-construction}
In this remark we explain a third construction of $G_q$ and the connection to Serre-Tate coordinates:  
Consider the extension 
\begin{equation}\label{eqn:integers-p-extension} 1 \rightarrow \ZZ \rightarrow \ZZ[1/p] \rightarrow \QQ_p/\ZZ_p \rightarrow 1\end{equation}
We obtain an extension of $\QQ_p/\ZZ_p$ by $\GG_m$, $A_q$, as the push-out of (\ref{eqn:integers-p-extension}) by 
\begin{equation}\label{eqn:pushout-map}
\ZZ \rightarrow \GG_m, \;\;
1 \mapsto q^{-1}. 
\end{equation}
We claim there is a natural isomorphism
$G_q \isoeq A_q[p^\infty]$
respecting the extension structure. 
To see this, note that the push-out $A_q$ is constructed as the quotient of $\GG_m \times \ZZ[1/p]$ by the subgroup generated by $(q,1)$. Then, the $p^\infty$-torsion is just the image of $\Roots_q$ in $A_q$, as desired. 

We note that if $q \in \widehat{\bbG_m}(R)$, then taking the push-out and passing to $p^\infty$ torsion is equivalent to just taking the pushout under (\ref{eqn:pushout-map}) viewed as a map to $\widehat{\bbG_m}$. Thus, when restricted to $q \in \widehat{\bbG_m}(R)$ for Artin local $R$ with perfect residue field, our construction gives the extension of $\bbQ_p/\bbZ_p$ by $\mu_{p^\infty}$ with Serre-Tate coordinate $q^{-1}$ (cf. \cite[Appendix 2.4-2.5]{messing:crystals-associated-to-barsotti-tate}). 
\end{remark}

We will need the following result on maps between Kummer $p$-divisible groups:

\begin{lemma}\label{lemma:maps-between-kummer} Isomorphisms  $\calE_q \xrightarrow{\sim} \calE_{q'}$ are identified with the fiber above $q'/q$ for the map
\[ \widetilde{\GG_m} \rightarrow \GG_m \]
sending $(x_0, x_1, \ldots)$ to $x_0$. 
\end{lemma}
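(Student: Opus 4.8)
The plan is to reduce the statement to an explicit computation of isomorphisms at the level of the sheaves $\Roots$. Recall from the construction that $G_q = \Roots_q/\ZZ$ where $\Roots_q \subset \GG_m \times \ZZ[1/p]$ is the fppf sheaf of pairs $(x,m)$ with $x^{p^k} = q^{p^k m}$ for $k \gg 0$, and similarly for $q'$; the extension structure records the surjection to $\ZZ[1/p]$, hence after quotienting, to $\bbQ_p/\bbZ_p$. An isomorphism of extensions $\calE_q \xrightarrow{\sim} \calE_{q'}$ must be compatible with the identifications of the sub $\mu_{p^\infty}$ and the quotient $\bbQ_p/\bbZ_p$; I will first pull it back to an isomorphism $\Roots_q \xrightarrow{\sim} \Roots_{q'}$ compatible with the projections to $\ZZ[1/p]$ and the inclusions of $\mu_{p^\infty}$ (this lifting is automatic since $\Roots_q \to G_q$ is the quotient by the canonical copy of $\ZZ$ generated by $(q,1)$, and the isomorphism must carry $(q,1)$ to $(q',1)$ up to $\mu_{p^\infty}$-torsion — in fact exactly, by compatibility with the projection to $\bbZ[1/p]$ landing $1 \mapsto 1$).

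The key step is then to show that such an isomorphism $\psi: \Roots_q \xrightarrow{\sim} \Roots_{q'}$ is determined by, and freely determined by, where it sends the canonical section value $(q,1)$. Since $\psi$ must be the identity on $\mu_{p^\infty}$ and compatible with projection to $\ZZ[1/p]$, for any local section $(x,m)$ we have $\psi(x,m) = (x \cdot c^{?}, m)$ where the ambiguity is controlled by the value on $(q,1)$: writing $\psi(q,1) = (q \cdot \lambda_0, 1)$ for some $\lambda_0$ — no wait, more precisely, $\psi$ sends $(q,1)$ to some element $(y, 1) \in \Roots_{q'}$, and then for a general $(x,m)$ with $p^k m \in \ZZ$, choosing a representation, $\psi$ is forced by multiplicativity and $\mu_{p^\infty}$-equivariance. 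The point is that an element $(y,1) \in \Roots_{q'}$ satisfies $y^{p^k} = (q')^{p^k}$ for $k \gg 0$, i.e. $y/q'$ is killed by some $p^k$ in $\GG_m$... this is not quite enough; rather one checks $\psi$ must send $(x, 1/p^j)$ to a $p^j$-th root of $\psi(q,1)^{?}$. The cleanest formulation: an isomorphism $\calE_q \cong \calE_{q'}$ is the same as a compatible system $(x_0, x_1, \dots)$ with $x_0 = q'/q$ and $x_{i}^p = x_{i-1}$, i.e. a point of $\widetilde{\GG_m}$ over $q'/q$. One direction builds $\psi$ from such a system by $(x, m) \mapsto (x \cdot x_j^{\,p^j m}, m)$ for $m \in \tfrac{1}{p^j}\ZZ$ (well-defined by the compatibility $x_j^p = x_{j-1}$); the other extracts the system as the images of the canonical sections $(q^{1/p^j}, 1/p^j)$-type elements, or more robustly by noting $\Isom(\calE_q, \calE_{q'})$ is a torsor under $\Isom(\calE_1, \calE_1)$, and the latter group of automorphisms of the trivial-type extension is exactly $T_p\mu_{p^\infty} = \widetilde{\GG_m} \times_{\GG_m} \{1\}$, matching Lemma~\ref{lemma:vector-extension-map}-style identifications; translating the torsor to the fiber of $\widetilde{\GG_m} \to \GG_m$ over $q'/q$ finishes it.

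The main obstacle I anticipate is the well-definedness and exhaustiveness of the correspondence between isomorphisms and compatible root systems: one must verify that the formula $(x,m) \mapsto (x \cdot x_j^{p^j m}, m)$ genuinely lands in $\Roots_{q'}$ (requires $(x x_j^{p^j m})^{p^k} = (q')^{p^k m}$, which follows from $x^{p^k} = q^{p^k m}$, $x_0 = q'/q$, and chasing the tower), that it's a homomorphism of sheaves respecting the $\ZZ$-quotients (so that it descends to $G_q \to G_{q'}$), and — conversely — that every isomorphism of extensions arises this way, with the compatible system recovered canonically so the bijection is functorial in the base $R$. Since $\widetilde{\GG_m} \to \GG_m$ is an fpqc surjection with fiber a $T_p\mu_{p^\infty}$-torsor (cf. Lemma~\ref{lemma:universal-cover-fpqc-surjection}), and $\Isom(\calE_q,\calE_{q'})$ is visibly a pseudo-torsor under $\Hom(\bbQ_p/\bbZ_p, \mu_{p^\infty}) = T_p\mu_{p^\infty}$ that is non-empty fpqc-locally, matching the two torsor structures is the crux; once the map is exhibited on canonical sections this compatibility is forced.
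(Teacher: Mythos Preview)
Your proposal is correct and follows essentially the same approach as the paper: construct the isomorphism from a compatible system of $p$-power roots of $t=q'/q$ via $(a,k/p^n)\mapsto(a\cdot t^{k/p^n},k/p^n)$, and recover the roots from a given isomorphism. The paper works directly with the unique representatives in $G_q[p^n]$ rather than lifting to $\Roots_q$, and for the converse simply reads off the $p^n$th root of $t$ as $a'/a$ where $\psi(a,1/p^n)=(a',1/p^n)$ (noting this is independent of $a$ since two choices differ by $\mu_{p^n}$), rather than invoking the $T_p\mu_{p^\infty}$-torsor structure---but these are cosmetic differences, and your torsor framing is a perfectly good alternative for the converse.
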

\begin{proof}
Let $t=q'/q$. Suppose given a compatible system of roots $t^{1/p^n}$ of $t$. We obtain an isomorphism between $G_q[p^n]$ and $G_q'[p^n]$ respecting the extension structure by sending an element $(a, k/p^n)$ to $(a t^{k/p^n}, k/p^n)$, and these are compatible for varying $n$. 

Conversely, given an isomorphism $\psi:G_q[p^\infty] \rightarrow G_{q'}[p^\infty]$ compatible with the extension structures, if we restrict to $\psi_n:G_q[p^n]\rightarrow G_{q'}[p^n]$, then for any $(a,1/p^n)\in G_q[p^n]$, $\psi (a,1/p^n) = (a', 1/p^n)$ for $a'$ such that $a'^{p^n}=q'$, and $a'/a$ is $p^n$th root of $t$ that is \emph{independent} of $a$ because two choices of $a$ differ by an element of $\mu_{p^n}$; it thus comes from an element of $R^\times$, and the isomorphism at level $p^n$ is as above; the roots of $t$ chosen by varying the level then must also be compatible, giving an element of $\tilde{\GG_m}$ mapping to $t$. 
\end{proof}

\subsection{Dwork's equation $\tau=\log q$}
The universal deformation of $\mu_{p^\infty} \times \bbQ_p/\bbZ_p$ over $\overline{\bbF_p}$, $G_{\univ}/\Spf W(\overline{\bbF}_p)[[t]]$, is canonically an extension 
\[\calE: 1 \rightarrow \mu_{p^\infty} \rightarrow{G_{\univ}} \rightarrow \bbQ_p/\bbZ_p \rightarrow 1. \]
Because $W(\overline{\bbF}_p)[[t]]$ is pro-Artin local, $\calE = \calE_q$, for a unique $q \equiv 1 \mod (t,p)$, and $q^{-1}$ is the Serre-Tate coordinate (cf. Example \ref{example:kummer-groups}-(2) and Remark \ref{remark:alternate-construction}). The $W(\overline{\bbF_p})$ point $x_\can$ with $q=1$ parameterizes the unique split lift to $W(\overline{\bbF}_p)$, the canonical lifting, and we can extend the canonical basis $\omega_\can|_{x_\can}, u_\can|_{x_\can}$ of $\bE(G)|_{x_\can}$ at this point to a flat basis over the divided powers envelope of $x_\can$ (the extension of $u_\can|_{x_\can}$ is just $u_\can$ itself, but $\omega_\can$ is not flat so the flat extension of $\omega_\can|_{x_\can}$ is not equal to $\omega_\can$). The position of the Hodge filtration with respect to this basis then defines a divided powers function $\tau$, and a conjecture of Dwork proven by Katz \cite{katz:serre-tate} states\footnote{Recall from Remark \ref{remark:alternate-construction} that the Serre-Tate coordinate of $\calE_q$ is $q^{-1}$!}
\[ \tau = \log q^{-1}. \]

As observed by Katz \cite{katz:l-via-moduli}, this is equivalent to computing, in the language of \ref{subsection:the-canonical-trivialization}, 
\[ d\tau_{\calE_q}=d\log q^{-1}. \]
We now give a simple proof of this result by using a very ramified base-change to split $\calE_q$. The result is valid for any Kummer $p$-divisible group:

\begin{theorem}\label{theorem:dworkequation}
For $S$ a scheme on which $p$ is locally nilpotent and 
\[ q\in \bbG_m(S) = \calO(S)^\times, \]
we have 
\[ d\tau_{\calE_q} = - d\log q = d \log q^{-1} = -\frac{dq}{q}. \]
\end{theorem}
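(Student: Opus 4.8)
The plan is to reduce to the universal case over $\bbZ[q^{\pm 1}]$ (using that the formation of $\calE_q$ and of $d\tau_{\calE_q}$ commutes with base change, as noted in Example~\ref{example:kummer-groups}-(3)), and then to compute $d\tau_{\calE_q}$ by pulling back along a ramified cover on which the extension $\calE_q$ becomes split. Concretely, I would work over the base $S' = \Spec \bbZ[q^{\pm 1/p^\infty}]$ (or rather a $p$-nilpotent truncation thereof, so that the crystalline formalism of Section~\ref{section:p-divisible-groups} applies), on which there is a canonical point $\tilde q = (q, q^{1/p}, q^{1/p^2}, \ldots) \in \wt{\GG_m}(S')$ lifting $q$. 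By Lemma~\ref{lemma:maps-between-kummer}, $\tilde q$ (viewed as lying above $q = q/1$) furnishes an isomorphism of extensions $\calE_1 \xrightarrow{\sim} \calE_q$ over $S'$ — that is, it \emph{splits} the pulled-back extension, identifying $G_q|_{S'}$ with the trivial extension $\mu_{p^\infty} \times \bbQ_p/\bbZ_p$.

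The point of splitting is that on $\calE_1$ the de Rham realization is computed directly: the splitting identifies $\Lie \bE(G^\vee)$ with $\Lie\bE(\mu_{p^\infty}) \times \Lie\bE((\bbQ_p/\bbZ_p)^\vee)$, and under this identification $u_\can = t\partial_t$ is flat and the Hodge filtration is spanned by the flat section $\frac{dt}{t}$, so $d\tau_{\calE_1} = 0$. The content is then to track how $\omega_\can$ and $u_\can$ transform under the isomorphism $\calE_1 \xrightarrow{\sim}\calE_q$ at the level of $\Lie\bE$ with its crystalline connection. This is exactly the situation governed by diagram~(\ref{diagram:universal-covers-and-vector-extensions}) and computed by Lemma~\ref{lemma:vector-extension-map}: the isomorphism is built from the map $\bbQ_p/\bbZ_p \to \mu_{p^\infty}$ (over a divided-powers thickening) encoded by the element $\tilde q$, whose effect on $\Lie\bE(\bbQ_p/\bbZ_p) \to \Lie\bE(\mu_{p^\infty})$ is multiplication by $\log q_0 = \log q$. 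Thus the change-of-basis matrix between the canonical bases of $\calE_1$ and $\calE_q$ is unipotent lower-triangular with off-diagonal entry $\pm\log q$, and feeding this into the formula $d\tau_{\calE} = \nabla_\crys(\omega_\can)/u_\can$ and using that $d\tau_{\calE_1} = 0$ produces $d\tau_{\calE_q} = \pm d(\log q) = -\frac{dq}{q}$, with the sign to be pinned down by the conventions in Remark~\ref{remark:pairing}/Remark~\ref{remark:alternate-construction} (the Serre-Tate coordinate of $\calE_q$ being $q^{-1}$ is the sanity check that fixes the sign).

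To make this rigorous one must be slightly careful that $\log q$ does not literally make sense on $\bbZ[q^{\pm 1/p^\infty}]$ — the logarithm in Lemma~\ref{lemma:vector-extension-map} requires a nilpotent divided-powers ideal containing $q_0 - 1$. I would handle this by the usual dévissage: compute $d\tau_{\calE_q}$ after pulling back to each infinitesimal neighborhood, i.e. base change along $\bbZ[q^{\pm 1/p^\infty}] \to \bbZ[q^{\pm1/p^\infty}]/J^n$ for suitable ideals $J$ (centered so that the relevant unit is $\equiv 1$), apply Lemma~\ref{lemma:vector-extension-map} there, and note that $d\log q$ is characterized compatibly across these neighborhoods; then conclude the identity of differential forms on $S'$, and finally descend from $S'$ back to $\Spec\bbZ[q^{\pm1}]$ (and hence to an arbitrary $S$) using faithful flatness of the cover, since $d\tau_{\calE_q}$ is already defined on $S$ and pulls back to $d\tau$ on $S'$ while $-\frac{dq}{q}$ visibly does too. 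The main obstacle is the bookkeeping in this last paragraph: organizing the divided-powers neighborhoods so that Lemma~\ref{lemma:vector-extension-map} applies and the local computations glue, and getting the sign conventions in diagram~(\ref{diagram:universal-covers-and-vector-extensions}) to match the stated $-\frac{dq}{q}$; the conceptual step (split via a ramified cover, read off the unipotent twist from Lemma~\ref{lemma:vector-extension-map}) is short.
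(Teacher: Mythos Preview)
There is a genuine gap. Your core move—split $\calE_q$ over the perfected cover $S'=\Spec\bbZ/p^n[q^{\pm 1/p^\infty}]$ and then descend—cannot work, for two related reasons.

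First, the splitting $\calE_1\xrightarrow{\sim}\calE_q|_{S'}$ furnished by $\tilde q$ is an isomorphism \emph{of extensions}, so by construction it carries $\omega_\can$ to $\omega_\can$ and $u_\can$ to $u_\can$. There is no nontrivial unipotent change of basis with entry $\log q$: the canonical bases already match, and Lemma~\ref{lemma:vector-extension-map} does not apply to $\tilde q$ in the way you suggest (its $g_0=q$ is not $\equiv 1$ modulo any nilpotent divided-powers ideal of $S'$). What you would actually compute is $d\tau_{\calE_q|_{S'}}=d\tau_{\calE_1}=0$.

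Second, this is consistent with the claimed formula but vacuous: on $S'$ one has $q=(q^{1/p^k})^{p^k}$, hence $dq=p^k\cdot(\ldots)\cdot d(q^{1/p^k})=0$ in $\Omega_{S'}$ once $p^k=0$. So $-dq/q$ also pulls back to $0$. The map $f^*\Omega_S\to\Omega_{S'}$ is \emph{not} injective, and faithful flatness of $S'\to S$ does not let you descend identities of differential forms. Your proposed d\'evissage through $\bbZ[q^{\pm 1/p^\infty}]/J^n$ only makes this worse.

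The paper's proof avoids this by first contracting with the vector field $q\partial_q$, reducing to the identity $d\tau_{\calE_q}(q\partial_q)=-1$ of \emph{functions}, which \emph{can} be checked after flat base change. Concretely, $q\partial_q$ corresponds to comparing $\calE_q$ and $\calE_{(1+\epsilon)q}$ over $R[\epsilon]/\epsilon^2$; after adjoining $q^{1/p^\infty}$ and $(1+\epsilon)^{1/p^\infty}$ both extensions split, and the reduction-mod-$\epsilon$ identification becomes a unipotent automorphism of $\mu_{p^\infty}\times\bbQ_p/\bbZ_p$ with off-diagonal entry $((1+\epsilon)^{-1},(1+\epsilon)^{-1/p},\ldots)$. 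Now Lemma~\ref{lemma:vector-extension-map} applies with $g_0=(1+\epsilon)^{-1}$, giving the entry $\log(1+\epsilon)^{-1}=-\epsilon$. The ramified cover still plays the role you envisioned, but the key quantity that gets ``logged'' is $1+\epsilon$, not $q$.
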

\begin{proof}
By reduction to the universal case, it suffices to prove this for $\calE_q$ over 
\[ S=\bbG_{m, \bbZ/p^n\bbZ}=\Spec \bbZ/p^n\bbZ [q^{\pm 1}]. \]
In this case, $\Omega_S$ is free with basis $d\log q=\frac{dq}{q}$, thus it suffices to show that 
\[ \nabla_{\crys, q \partial_q}(d\tau_{\calE_q})=-1. \]
The vector field $q \partial q$, thought of as a map
\[ t: D\times S \rightarrow S \]
is given by the map of rings
\[ R \rightarrow R[\epsilon]/\epsilon^2, \, q \mapsto (1+\epsilon)q,  \]
and we can compute the isomorphism 
\[ t^* \Lie \bE(G_q^\vee) \rightarrow  0^* \Lie \bE(G_q^\vee) \]
induced by $\nabla_\crys$ as follows:

First, we observe that $t^* \calE_{q} = \calE_{(1+\epsilon)q}$ and ${0^* \calE_{q} = \calE_{q}}$, where $q$ is thought of an element of $R[\epsilon]$, and under these identifications the isomorphism 
\[ 0^* \calE_{q} \mod \epsilon \xrightarrow{\sim} t^*\calE_{q} \mod \epsilon \]
is identified with the canonical isomorphism 
\begin{equation} \label{equation:canonical-reduction-kummer-iso} \calE_{q} \mod \epsilon = \calE_{(1+\epsilon)q} \mod \epsilon  \end{equation}
given by $(1+\epsilon) q=q \mod \epsilon$.

Thus, using the description of \ref{sss:connections-and-vector-fields}, it suffices to show that the induced map 
\begin{equation}\label{equation:induced-map} \nabla_{\crys, q\partial_q}: \Lie \bE(G_{(1+\epsilon)q}^\vee) \rightarrow  \Lie \bE(G_{q}^\vee) \end{equation}
is given in the canonical bases by
\begin{equation}
\label{equation:desired-matrix} \begin{pmatrix} 1 & 0 \\ 1-\epsilon & 1 \end{pmatrix}. \end{equation}
It suffices to verify this after flat base change, so we may adjoin roots $q^{1/p^\infty}$ and $(1+\epsilon)^{1/p^\infty}$ to obtain a ring $R_\infty / (R[\epsilon]/\epsilon^2)$. 

Over $R_\infty$, the maps $1/p^n \rightarrow q^{1/p^n}$ and $1/p^n \rightarrow (1+\epsilon)^{1/p^n}q^{1/p^n}$ split $\calE_q$ and $\calE_{(1+\epsilon)q}$. In these trivializations, the canonical isomorphism (\ref{equation:canonical-reduction-kummer-iso}) is identified with the map 
\[ \mu_{p^\infty} \times \bbQ_p/\bbZ_p \rightarrow \mu_{p^\infty} \times \bbQ_p/\bbZ_p \]
given by
\[ \begin{pmatrix} 1 & ( (1+\epsilon)^{-1}, (1+\epsilon)^{-1/p}, \ldots) ) \\ 0 & 1 \end{pmatrix} \mod \epsilon. \]
The transpose map
\[ G_{(1+\epsilon) q}^\vee \rightarrow G_q^\vee \]
is identified with 
\[ \begin{pmatrix} 1 & 0 \\
 ((1+\epsilon)^{-1}, (1+\epsilon)^{-1/p}, \ldots ) & 1 \end{pmatrix} \mod \epsilon, \]
and using Theorem \ref{theorem:computation-of-action}, we concluded that over $R_\infty$, in the canonical bases the map (\ref{equation:induced-map}) is given by (\ref{equation:desired-matrix}), as desired.
\end{proof}

\section{Moduli problems for ordinary elliptic curves}\label{section:moduli-problems}
In this section, we discuss various moduli problems for ordinary elliptic curves over a base $S$ where $p$ is locally nilpotent. 

\subsection{Level structures}

\subsubsection{Prime-to-$p$ level structure} 
\newcommand{\Sch}{\mathrm{Sch}}
\newcommand{\Zar}{\mathrm{Zar}}
For $T$ a topological space, we write $\underline{T}$ for the functor on $\Sch$ sending $S$ to $\Cont(|S|, T)$, where $|S|$ denotes the topological space underlying $S$. 

Given an elliptic curve $E/S$ over a scheme $S$, we define the prime-to-$p$ Tate module 
\[ T_{\wh{\bbZ}^{(p)}} E:= \lim_{(n,p)=1} E[n], \]
as a functor on $\Sch/ S$, where the transition map from $E[n']$ to $E[n]$ for $n|n'$ is multiplication by $n' / n$. The transition maps are affine, so the prime-to-$p$ Tate module is representable. We define the adelic prime-to-$p$ Tate module as the sheaf on $S_\Zar$
\[ V_{\AA_f^{(p)}} E := T_{\wh{\bbZ}^{(p)}} E \otimes_\bbZ \QQ.  \]
The prime-to-$p$ Tate module is functorial for quasi-$p$-isogenies, and the prime-to-$p$ adelic Tate module is functorial for quasi-isogenies. 

An integral prime-to-$p$ infinite level structure on $E$ is a trivialization 
\[ \varphi_{\wh{\bbZ}^{(p)}}: T_{\wh{\bbZ}^{(p)}} E \xrightarrow{\sim} \underline{\left(\wh{\bbZ}^{(p)}\right)^2}. \]

An rational prime-to-$p$ infinite level structure on $E$ is a trivialization
\[ \varphi_{\bbA_f^{(p)}}:  V_{\AA_f^{(p)}} E \xrightarrow{\sim} \underline{\left(\bbA_f^{(p)}\right)^2}. \]
The degree of a rational prime-to-$p$ infinite level structure is the index 
\[ \left[ \varphi_{\bbA_f^{(p)}}\l(T_{\wh{\bbZ}^{(p)}} E\r) :  \left(\wh{\bbZ}^{(p)}\right)^2 \right] .\]

\subsubsection{Structures at $p$}
If $R$ is a ring in which $p$ is nilpotent, and $E/\Spec R$ is an elliptic curve, we will consider the following presheaves on $\Nilp_R^\op$:
\begin{enumerate}
\item The $p$-divisible group 
\[ E[p^\infty] := \colim E[p^n] \]
\item The formal group $\wh{E}=E[p^\infty]^\circ$ (as defined already in \ref{subsection:formal-neighborhoods-and-lie-algebras}),
\item The universal cover of $E[p^\infty]$,
\[ \wt{E[p^\infty]} := \lim_{p} E[p^\infty] \]
(as defined already in \ref{subsection:universal-covers}.)
\end{enumerate}
The formal group and $p$-divisible group of $E$ are functorial with respect to quasi-prime-to-$p$-isogenies of $E$, and the universal cover of $E[p^\infty]$ is functorial with respect to quasi-isogenies of $E$.

\subsubsection{Katz level structure at $p$}

A Katz level structure on $E[p^\infty]$ is a trivialization
\[ \wh{\varphi}_p: E[p^\infty]^\circ \xrightarrow{\sim} \mu_{p^\infty}. \]

\subsubsection{Infinite level structure at $p$}

An integral ordinary infinite level structure on $E[p^\infty]$ is a trivialization 
\[ \varphi_p: E[p^\infty] \xrightarrow{\sim} \mu_{p^\infty} \times \bbQ_p/\bbZ_p. \]
A rational ordinary infinite level structure on $E[p^\infty]$ is a trivialization
\[ \varphi_p: \wt{E[p^\infty]} \xrightarrow{\sim} \wt{\mu_{p^\infty}} \times \bbQ_p \; \left( = \wt{ \mu_{p^\infty} \times \bbQ_p/\bbZ_p} \right). \]
The degree of a rational ordinary infinite level structure is the degree of the corresponding quasi-isogeny $E[p^\infty] \rightarrow \mu_{p^\infty} \times \bbQ_p/\bbZ_p$.

\subsection{Polarization and the Weil pairing}
Our moduli problems will need to take into account a polarization, so we first recall some notation. For $R$ a ring in which $p$ is nilpotent and $E/\Spec R$ an elliptic curve, the $p^n$-Weil pairing is a perfect antisymmetric pairing
\[ e_{p^n, E}: E[p^n] \times E[p^n] \rightarrow \mu_{p^n}. \]
It induces an anti-symmetric $\bbQ_p$-bilinear pairing
\[ \wt{e}_E: \wt{E[p^\infty]} \times \wt{E[p^\infty]} \rightarrow \wt{\mu_{p^\infty}} \]
given by 
\[ \wt{e}_E ( (a_k), (b_k) ) = (c_k) \]
where
\[ c_k = \left( e_{p^t, E}(a_i, b_j) \right)^{p^s} \]
for $i+j= s + t + k$ and $t$ large enough that $a_i, b_j \in E[p^t]$ so that the right-hand side is defined. 

\begin{lemma}\label{lemma:isogeny-action-weil-pairing}
If $f: E\rightarrow E'$ is an isogeny or quasi-prime-to-$p$ isogeny, then
\[ f^* e_{p^n, E'} = e_{p^n, E}^{\deg f}. \]
If $f$ is a quasi-isogeny,
\[ f^* \wt{e}_{E'} = \wt{e}_E^{\deg f}. \]
\end{lemma}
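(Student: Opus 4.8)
The plan is to reduce everything to the classical compatibility of the Weil pairing with duality for honest isogenies, and then to propagate it to quasi-prime-to-$p$ isogenies and to quasi-isogenies of universal covers by purely formal divisibility arguments. For an honest isogeny $f : E \to E'$, I would use the canonical principal polarizations of $E$ and $E'$ to realize $e_{p^n, E}$ as obtained from the tautological Weil pairing $E[p^n] \times E^\vee[p^n] \to \mu_{p^n}$ between the elliptic curve and its dual. That pairing is adjoint for duality, $\langle f x, y' \rangle_{E'} = \langle x, f^\vee y' \rangle_E$, and transporting $f^\vee$ through the polarizations produces the dual isogeny $\widehat f : E' \to E$ with $\widehat f \circ f = [\deg f]$; combined with $\ZZ$-bilinearity of the pairing this gives
\[ e_{p^n, E'}(fx, fy) = e_{p^n, E}(x, \widehat f f y) = e_{p^n, E}(x, [\deg f]y) = e_{p^n, E}(x,y)^{\deg f}. \]
This is a standard property of the Weil pairing, so I expect no trouble here beyond keeping track of the identifications.

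Next, for a quasi-prime-to-$p$ isogeny I would write $f = [1/N] \circ g$ with $g : E \to E'$ an honest isogeny and $N \geq 1$ prime to $p$, so that $\deg f = (\deg g)/N^2$. On $E[p^n]$ the multiplication $[N]$ is an automorphism and $[N]^* e_{p^n} = e_{p^n}^{N^2}$ by bilinearity, hence $[1/N]^* e_{p^n, E'} = e_{p^n, E'}^{1/N^2}$, the exponent being a unit in $\ZZ_{(p)}$ which acts on $\mu_{p^n}$. Composing with the honest-isogeny case applied to $g$ then yields $f^* e_{p^n, E'} = e_{p^n, E}^{(\deg g)/N^2} = e_{p^n, E}^{\deg f}$.

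Finally, for the statement on universal covers I would first observe that the ``carrying'' formula defining $\widetilde e_E$ is term-by-term functorial: applying the previous two cases to each factor $e_{p^t, E}(a_i, b_j)$ immediately gives $\widetilde e_{E'}(\widetilde f a, \widetilde f b) = \widetilde e_E(a,b)^{\deg f}$ whenever $f$ is an isogeny or a quasi-prime-to-$p$ isogeny. For a general quasi-isogeny $f$, pick $N \geq 1$ with $g := [N] \circ f$ an honest isogeny; then $\widetilde g = N \cdot \widetilde f$ and $\deg g = N^2 \deg f$, and since $\widetilde{\mu_{p^\infty}}$ is a $\QQ_p$-vector space the pairing $\widetilde e$ is $\QQ_p$-bilinear, so
\[ \widetilde e_{E'}(\widetilde f a, \widetilde f b)^{N^2} = \widetilde e_{E'}(\widetilde g a, \widetilde g b) = \widetilde e_E(a,b)^{\deg g} = \widetilde e_E(a,b)^{N^2 \deg f}, \]
and cancelling $N^2$ finishes. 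The main (and only) point requiring care is interpreting ``raising to the power $\deg f$'': the exponent lies in $\ZZ_{(p)}^\times$ in the quasi-prime-to-$p$ case and in $\QQ_p^\times$ (in fact in $p^{\ZZ}$) in the quasi-isogeny case, and the concluding cancellation relies on $\widetilde{\mu_{p^\infty}}$ being torsion-free and uniquely divisible, not merely $p$-divisible. I do not anticipate a conceptual obstacle; the only real bookkeeping is fixing the polarizations and dual isogenies in the first step.
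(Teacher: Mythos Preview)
Your proposal is correct and matches the paper's approach exactly: cite the classical isogeny formula for $e_{p^n}$, pass termwise to $\wt{e}$, and then extend to quasi-(prime-to-$p$) isogenies using that the relevant powers are invertible on $\mu_{p^n}$ resp.\ $\wt{\mu_{p^\infty}}$. Two small inaccuracies in your final paragraph (which do not affect the argument): $\deg f$ for a quasi-prime-to-$p$ isogeny lies in $\ZZ_{(p)}$, not $\ZZ_{(p)}^\times$, and for a general quasi-isogeny it lies in $\QQ_{>0}$, not $p^{\ZZ}$.
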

\begin{proof}
The first equation for isogenies is a well-known property of the Weil pairing, and the second equation for isogenies is then immediate from the definition of $\wt{e}$. Once the isogeny statements are established, the quasi-isogeny statements follow as raising to a prime-to-$p$ integer power is invertible on $\mu_{p^n}$ and raising to any integer power is invertible on $\wt{\mu_{p^\infty}}$. 
\end{proof}

In particular, we note that the $p^n$ Weil pairings $e_{p^n}$ are functorial in degree one quasi-prime-to-$p$-isogenies of $E$, and the universal cover Weil pairing $\wt{e}$ is functorial in degree one quasi-isogenies of $E$. 

Below we will also consider the standard pairing 
\begin{equation}\label{eqn:standard-pairing} \langle, \rangle_{\std}: (\wt{\mu_{p^\infty}} \times \bbQ_p)^2 \rightarrow \wt{\mu_{p^\infty}}, \; \langle (x_1, x_2) , (y_1,y_2) \rangle_\std = x_1^{y_2} y_1^{-x_2}. \end{equation}

\subsection{The Igusa moduli problem of Caraiani-Scholze}
The Igusa moduli problem of Caraiani-Scholze \cite{caraiani-scholze:generic} classifies, for $\Spec R \in \Nilp_{\bbZ_p}^{\op}$ the set of triples
\[ (E, \varphi_p, \varphi_{\bbA_f^{(p)}}) \]
where $E/R$ is an elliptic curve up to isogeny, $\varphi_p$ is a rational ordinary infinite level structure on $E[p^\infty]$, and $\varphi_{\bbA_f^{(p)}}$ is a rational infinite\footnote{Caraiani-Scholze work with finite prime-to-$p$ level structure but passing to infinite level away from $p$ poses no serious difficulties.} prime-to-$p$ level structure. 

\subsubsection{Representability} By work of Caraiani-Scholze \cite{caraiani-scholze:generic}, this moduli problem is represented by an affine $p$-adic formal scheme over $\bbZ_p$, 
\[ \Ig_\CS = \Spf \bbV_{\CS}. \]
The ring $\bbV_{\CS}$ is flat over $\bbZ_p$; indeed, it is the Witt vectors of a perfect ring over $\bbF_p$. For the finite level variant this follows from  \cite[p.718]{caraiani-scholze:generic} (who work over $W(\overline{\bbF_p})$ instead of $\bbZ_p$, but this is not necessary here). Their argument applies equally well to infinite level at $p$ by taking the Witt lift of the perfect ring representing the corresponding mod $p$ moduli problem, which is just the colimit of the perfect rings representing the finite level moduli problems (equivalently, one takes the colimit of the Witt lifts of these and then $p$-adically completes). 

\begin{remark}
We will explain the construction of $\bbV_{\CS}$ in more detail from a classical perspective below. 
\end{remark}

\subsubsection{A polarized variant}
For our purposes, we will also need the polarized variant of this moduli problem. To state it, we first observe that any triple $(E, \varphi_p, \varphi_{\bbA_f^{(p)}})$ as above, we can choose a representative for the isogeny class of $E$ such that $\varphi_p$ and $\varphi_{\bbA_f^{(p)}}$ are both degree one, and that such a representative is determined up to degree one quasi-isogeny. Because the Weil pairing of an elliptic curve is preserved under degree one quasi-isogeny, we obtain a well-defined Weil pairing $\wt{e}_{(E, \varphi_p, \varphi_{\bbA_f^{(p)}})}$ on $\wt{E[p^\infty]}.$ The polarized moduli problem then parameterizes triples as above where we additionally require that
\[ (\varphi_p^{-1})^* \wt{e}_{(E, \varphi_p, \varphi_{\bbA_f^{(p)}})} = \langle, \rangle_\std \]
where $\langle,\rangle_\std$ is the pairing defined in (\ref{eqn:standard-pairing}). The polarized moduli problem is represented by a closed formal subscheme $\Ig_{\CS}^1 = \Spf \bbV_{\CS}^1 \subset \Ig_{\CS}$. 

\subsubsection{A $p$-integral version}\label{sss:p-integral-cs}
The corresponding $p$-integral moduli problem where $E$ is an elliptic curve up to quasi-prime-to-$p$-isogeny and $\varphi_p$ is an integral ordinary infinite level structure is equivalent to the rational moduli problem via the natural inclusion, and thus is also represented by $\Ig_{\CS}$ (cf., e.g., \cite[Lemma 4.3.10]{caraiani-scholze:generic}). Similarly, there is an equivalent $p$-integral formulation of the polarized moduli problem.  

\subsection{The Igusa moduli problem of Katz}
The Igusa moduli problem of Katz  classifies, for $\Spec R \in \Nilp_{\bbZ_p}^{\op}$ the set of triples
\[ (E, \wh{\varphi}_p, \varphi_{\bbA_f^{(p)}}) \]
where $E/R$ is an elliptic curve up to quasi-prime-to-$p$-isogeny, 
\[ \wh{\varphi}_p :  E[p^\infty]^\circ \xrightarrow{\sim}  \mu_{p^\infty} \;\; (\textrm{equivalently}, \wh{\varphi}_p: \wh{E} \xrightarrow{\sim}  \wh{\bbG_m})  \]
is a Katz level structure, and $\varphi_{\bbA_f^{(p)}}$ is a rational infinite\footnote{Katz works with specific finite prime-to-$p$ level structures, but passing to infinite level away from $p$ poses no serious difficulties.} prime-to-$p$ level structure. 

\subsubsection{Representatiblity}
By work of Katz \cite{katz:l-via-moduli} this moduli problem is represented by an affine $p$-adic formal scheme over $\bbZ_p$
\[ \Ig_{\Katz} = \Spf \bbV_{\Katz} \]
The ring $\bbV_{\Katz}$ is flat over $\bbZ_p$. It is the $p$-adic completion of the colimit of $p$-adically complete rings representing the moduli problem parameterizing arithmetic $\Gamma_1(p^n)$ structure at $p$ and finite level structure away from $p$ (over all $n$ and all finite levels). 

\subsubsection{A polarized lift}\label{sss:polarized-lift-Katz}
Given a triple  $(E, \wh{\varphi}_p, \varphi_{\bbA_f^{(p)}})$ as above, we may choose a representative for $E$ such that $\varphi_{\bbA_f^{(p)}}$ is of degree one, and such a triple is unique up to degree one quasi-prime-to-$p$-isogeny of $E$. Thus we obtain for each $n$ a well-defined Weil pairing $e_{(E, \varphi_{\bbA_f^{(p)}}), n}$ on $E[p^n]$, and this induces a pairing
\[ (,)_{(E, \varphi_{\bbA_f^{(p)}}), n}: E[p^n]^\circ \times E[p^n]/E[p^n]^\circ \rightarrow \mu_{p^n}. \]
Using this pairing, $\wh{\varphi}_p$ induces an isomorphism
\[ \varphi_p^\et: E[p^\infty]/E[p^\infty]^\circ \xrightarrow{\sim} \bbQ_p/\bbZ_p \]
uniquely determined by the condition that, for each $n$, 
\[ \left( \left(\wh{\varphi}_p\right)^{-1} (\bullet) , \left(\varphi_p^\et\right)^{-1}(1/p^n) \right)_{(E, \varphi_{\bbA_f^{(p)}}),n} = \Id_{\mu_{p^n}}. \]

We can rephrase this by saying that $\Ig_{\Katz}$ also represents the moduli problem classifying quadruples
\[ (E, \wh{\varphi}_p, \varphi_p^\et, \varphi_{\bbA_f^{(p)}}) \]
where $E$ and $\varphi_{\bbA_f^{(p)}}$ are as above, 
\[ \wh{\varphi}_p: E[p^\infty]^\circ \xrightarrow{\sim} \mu_{p^\infty} \textrm{ and }  \varphi_p^\et: E[p^\infty]/E[p^\infty]^\circ \xrightarrow{\sim} \bbQ_p/\bbZ_p \]
are isomorphisms, and the pairing 
\[ \wt{\mu_{p^\infty}} \times \bbQ_p \rightarrow \wt{\mu_{p^\infty}} \]
induced by $\tilde{e}_{(E, \varphi_{\bbA_f^{(p)}})}$, $\wh{\varphi}_p$, and $\varphi^\et_p$ is given by
\[ (a, b) \mapsto a^b.\] 

\begin{remark}\label{remark:igusa-data-extension} We note that to give the data $\wh{\varphi}_p$ and $\varphi_p^\et$ is equivalent to equipping $E[p^\infty]$ with the structure of an extension
\begin{equation}\label{equation:igusa-data-extension} \calE_{E[p^\infty], \wh{\varphi}_p, \varphi_p^\et}: 1 \rightarrow \mu_{p^\infty} \rightarrow E[p^\infty] \rightarrow \bbQ_p/\bbZ_p \rightarrow 1. 
\end{equation}
\end{remark}

\subsection{Group actions}
\renewcommand{\l}{\left}
\renewcommand{\r}{\right}
\newcommand{\ul}[1]{\underline{#1}}
\subsubsection{Automorphism groups at $p$}
We consider the \emph{twisted Borel} $B_p$, the presheaf on $\Nilp_{\bbZ_p}^\op$ defined by  
\[ B_p(R)  := \Aut\l( \l(\wt{\mu_{p^\infty}} \times \bbQ_p\r)_R \r). \]
Because there are no non-zero maps from $\wt{\mu_{p^\infty}}$ to $\bbQ_p$ and $\Hom(\bbQ_p, \wt{\mu_{p^\infty}})=\wt{\mu_{p^\infty}}$, we can write this as the matrix group
\[ B_p = \begin{pmatrix} \ul{\bbQ_p^\times} & \wt{\mu_p^\infty} \\ 0 & \ul{\bbQ_p^\times} \end{pmatrix} \]
We write $M_p$ for the diagonal subgroup, and $U_p=\wt{\mu_{p^\infty}}$ for the unipotent subgroup. We write $\det: B_p \rightarrow \ul{\bbQ_p^\times}$ for the product of the diagonal entries. 

We also consider the integral variants
\begin{align*}
 B_p^\circ & := \Aut\l(\mu_{p^\infty} \times \bbQ_p/\bbZ_p \r) = \begin{pmatrix} \ul{\bbZ_p^\times} & T_p \mu_{p^\infty} \\ 0 & \ul{\bbZ_p^\times} \end{pmatrix} \subset B_p\\
 M_p^\circ & := \Aut(\mu_{p^\infty}) \times \Aut(\bbQ_p/\bbZ_p) \subset  M_p \\
 U_p^\circ &:= \Hom(\bbQ_p/\bbZ_p, \mu_{p^\infty}) = T_p \mu_{p^\infty} \subset U_p.
\end{align*}

\subsubsection{Moduli action on $\Ig_{\CS}$ and $\Ig_{\CS}^1$}
We write $G^{(p)} = \underline{\GL_2(\bbA_f^{(p)})}$. Composition with the level structures gives an action of $B_p \times G^{(p)}$ on $\Ig_{\CS}$.

We would like to understand the subgroup preserving $\Ig_{\CS}^1$: First, it is a straightforward computation to check that the Weil pairing constructed above transforms via the character 
\[ \det_\ur: B_p \times G^{(p)} \rightarrow \ul{\bbQ_p^\times}, (b_p) \times (g_\ell)_{\ell \neq p} \mapsto  |\det(b_p)|_p \cdot \prod_{\ell \neq p} |\det(g_\ell)|_\ell. \]
On the other hand, the standard pairing $\langle, \rangle_\std$ on $\wt{\mu_{p^\infty}} \times \bbQ_p$ transforms via
\[ \det_p: B_p \times G^{(p)} \rightarrow \ul{\bbQ_p^\times}, (b_p) \times (g_\ell)_{\ell \neq p} \mapsto  \det (b_p). \]
Thus, writing $\wt{\det}=\det_{\ur} \cdot \det_p$ (whose image is contained in $\ul{\bbZ_p^\times}$), we find that the group 
\[ \l( B_p \times G^{(p)} \r)^1 := \ker \wt{\det} \]
is the stabilizer of $\Ig_{\CS}^1$.  

\subsubsection{$p$-integal moduli action of the unipotent subgroup}\label{sss:explicit-description-unipotent}
It will be useful for computations with the crystalline connection for us to have a more explicit description of the action of $U_p$ on the $p$-integral moduli problem represented by $\Ig_{\CS}$.  We give such a description now; the key point is that any \emph{unipotent} automorphism of the universal cover lifts a unipotent automorphism of the $p$-divisible group $\mu_{p^\infty} \times \bbQ_p/\bbZ_p$ modulo a nilpotent ideal.  

Let $u \in U_p(R) = \wt{\mu_{p^\infty}}(R)$,  and $x \in \Ig_{\CS}(R)$. Write $u=(\zeta_k) \in \wt{\mu_{p^\infty}}(R)$ and let $I$ be any nilpotent ideal of $R$ containing $\zeta_0 - 1$. Then, 
\[ u \mod I = (1, \zeta_1 \mod I, \zeta_2 \mod I, \ldots ) \]
is an element of $U_p^\circ(R/I)$. Now, if $(E, \varphi_p, \varphi_{\bbA_f^{(p)}} )$ is a triple corresponding to $x$ under the $p$-integral moduli interpretation then
\[ u \cdot x = \l(E', \varphi'_p, \varphi'_{\bbA_f^{(p)}} \r) \]
where $E'$ is the Serre-Tate lift from $R/I$ to $R$ of $E_{R/I}$ determined by the isomorphism 
\[ (u \mod I) \circ (\varphi_p)_{R/I} : E_{R/I}[p^\infty] \xrightarrow{\sim} (\mu_{p^\infty} \times \bbQ_p/\bbZ_p)_{R/I},  \]
$\varphi'_p$ is the natural isomorphism 
\[ \varphi'_p: E'[p^\infty]  \xrightarrow{\sim} \mu_{p^\infty} \times \bbQ_p/\bbZ_p \]
and $\varphi'_{\bbA_f^{(p)}}$ is the unique lift of $\varphi_{\bbA_f^{(p)}}|_{R/I}$ from $E_{R/I}=E'_{R/I}$ to $E'$. 

\subsubsection{Moduli action on $\Ig_{\Katz}$}\label{sss:moduli-ig-katz}
The first moduli interpretation of $\Ig_{\Katz}$ leads to an action of $\ul{\bbZ_p^\times} \times G^{(p)}$, where here $\bbZ_p^\times = \Aut(\mu_{p^\infty})$. The second moduli interpretation leads to an action of
\[ \l( M_p^\circ \times G^{(p)} \r)^1 := \ker \wt{\det}|_{M_p^\circ \times G^{(p)}}. \]
These two actions are identified through the isomorphism 
\[ \ul{\bbZ_p^\times} \times G^{(p)} \xrightarrow{\sim} \l( M_p^\circ \times G^{(p)} \r)^1 \]
given by the assignment
\[ a \times (g_\ell)_{\ell \neq p}  \mapsto \begin{pmatrix} a & 0 \\ 0 & a^{-1} \prod_{\ell \neq p} |\det(g_\ell)|_\ell^{-1} \end{pmatrix} \times (g_\ell)_{\ell \neq p}. \] 
 
\subsubsection{Moduli problems with finite prime-to-$p$ level} 
If $K^p \subset \GL_2(\bbA_f^{(p)})$ is a compact open, then we may choose a lattice $\calL$ stabilized by $K^p$, and $N$ such that 
\[ K^p \subset K_N := \ker \GL(\calL) \rightarrow \GL(\calL/N\calL). \]

We may then consider the moduli problems where the prime-to-$p$ a $K^p$ orbit of trivializations $\varphi_{\bbA_f^{(p)}}$, or, what is equivalent (and easier to define precisely), a $K^p/K_N$-orbit of trivializations 
\[ \varphi_{\calL/N}: \underline{\calL/N} \xrightarrow{\sim} E[N]. \]

These finite prime-to-$p$ level moduli problems are also representable, and the infinite prime-to-$p$ level moduli problems are the inverse limit over these. Because the corresponding covers are finite \'{e}tale, we find that the Katz (resp. Caraiani-Scholze, resp. polarized Caraiani-Scholze) moduli problem with finite prime-to-$p$ level $K^p$ is represented by the ring $\bbV_{\Katz}^{K^p}$ (resp. $\bbV_{\CS}^{K^p}$, resp. $\l(\bbV_{\CS}^1\r)^{K^p}$). 

\begin{example} If $\Gamma_1(N)$ denotes the subgroup of $\GL_2(\wh{\bbZ}^{(p)})$ congruent to 
\[ \begin{pmatrix} 1 & * \\ 0 & * \end{pmatrix} \mod N  \]
then we find $\Spf \bbV_{\Katz}^{\Gamma_1(N)}$ represents the moduli problem classifying over $R$ the triples $(E, \wh{\varphi}, P)$ where $E$ is an elliptic curve over $R$, 
$\wh{\varphi}: \wh{E} \xrightarrow{\sim} \wh{\bbG_m}$, and $P \in E[N](R)$ is a point of (fiberwise) exact order $N$. This is the moduli problem most commonly considered in the literature on $p$-adic modular forms. 
\end{example}

\subsection{First presentation of $\Ig_{\Katz}$ as a quotient.}\label{ss.first-pres}
Recall from above that $\Ig_{\CS}$ has a $p$-integral moduli interpretation as parameterizing triples 
\[ (E, \varphi_p, \varphi_{\bbA_f^{(p)}} ) \]
where $E$ is an elliptic curve up to quasi-prime-to-$p$-isogeny, $\varphi_p$ is integral ordinary infinite level structure on $E[p^\infty]$, and $\varphi_{\bbA_f^{(p)}}$ is rational infinite prime-to-$p$ level structure. Then there is a natural projection map to $\Ig_{\Katz}$ given by
\[ (E, \varphi_p, \varphi_{\bbA_f^{(p)}} ) \rightarrow (E, \wh{\varphi}_p, \varphi_{\bbA_f^{(p)}}). \]
where $\wh{\varphi}_p$ is the isomorphism induced by $\varphi_p$ after restriction: 
\[ \wh{E[p^\infty]} \xrightarrow{\varphi_p|_{\wh{E[p^\infty]}}} \wh{\left(\mu_p^{\infty} \times \bbQ_p/\bbZ_p\right)} = \mu_{p^\infty}. \]

It can be verified that this map is an fpqc torsor for the action of 
\[ \begin{pmatrix} 1 & T_p \mu_{p^\infty}  \\ 0 & \ul{\bbZ_p^\times} \end{pmatrix} \subset B^\circ_p \]
on $\Ig_{\CS}$ (cf Lemma \ref{lemma:torsor} below). Thus, $\Ig_{\Katz}$ is an fpqc quotient of $\Ig_{\CS}$ for this group action, and we obtain a residual action of the quotient of the normalizer of this group in $B_p \times G^p$ by the group itself. This induces the action of $\bbZ_p^\times \times G^{(p)}$ given by the first moduli interpretation of $\Ig_{\Katz}$, but nothing further.  

The surprising observation that allows us to construct our $\wh{\bbG_m}$-action on $\Ig_\Katz$ is that, if we restrict this projection map to $\Ig_{\CS}^1$, then the normalizer grows, and we obtain a non-trivial residual action from the unipotent part $\wt{U}_p$. We describe this in the next section. 

\section{The $\wh{\bbG_m}$-action}\label{sec:the-action}

\subsection{Second presentation of $\Ig_{\Katz}$ as a quotient}

If we restrict the projection to $\Ig_{\CS}^1$, then in terms of the second moduli interpretation of $\Ig_{\Katz}$, it is given by
\[ (E, \varphi_p, \varphi_{\bbA_f^{(p)}}) \mapsto (E, \wh{\varphi}_p, \varphi_p^\et, \varphi_{\bbA_f^{(p)}}) \]
where $\wh{\varphi}_p$ is as described above in \ref{ss.first-pres} and $\varphi_p^\et$ is induced by
\[ E[p^\infty]/E[p^\infty]^\circ \xrightarrow{ \varphi_p } \l( \mu_{p^\infty} \times \bbQ_p/\bbZ_p \r) / \mu_{p^\infty} = \bbQ_p/\bbZ_p. \]
The contents of this statement is simply that the pairings used to define $\Ig_{\CS}^1$ and the polarized lifting of $\ref{sss:polarized-lift-Katz}$ are compatible. We then have
\begin{lemma}\label{lemma:torsor}
The projection map $\pi: \Ig_{\CS}^1 \rightarrow \Ig_{\Katz}$ is an fpqc torsor for the action of $U^\circ_p$ on $\Ig_{\CS}^1$.  
\end{lemma}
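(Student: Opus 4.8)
The plan is to verify the two defining properties of a $U^\circ_p$-torsor for $\pi$. The key preliminary step is to identify, for $\Spec R \in \Nilp_{\bbZ_p}^{\op}$ and a point $x$ of $\Ig_{\Katz}(R)$ --- equivalently a quadruple $(E, \wh{\varphi}_p, \varphi_p^\et, \varphi_{\bbA_f^{(p)}})$ as in \S\ref{sss:polarized-lift-Katz} --- the fibre functor $\pi^{-1}(x)$ on $R$-algebras with the functor sending $R'$ to the set of splittings over $R'$ of the rigidified extension $\calE_x \colon 1 \to \mu_{p^\infty} \to E[p^\infty] \to \bbQ_p/\bbZ_p \to 1$ attached to $x$ (Remark~\ref{remark:igusa-data-extension}). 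Indeed, lifting $x$ to $\Ig_{\CS}^1(R')$ amounts to giving a trivialization $\varphi_p \colon E[p^\infty]_{R'} \xrightarrow{\sim} \mu_{p^\infty} \times \bbQ_p/\bbZ_p$ restricting to $\wh{\varphi}_p$ on the formal group and inducing $\varphi_p^\et$ on the \'etale quotient, which is exactly the datum of a splitting of $\calE_x$; and --- this is the content of the compatibility of pairings recalled just before the statement --- such a $\varphi_p$ automatically satisfies the polarization condition $(\varphi_p^{-1})^* \wt{e} = \langle\,,\rangle_{\std}$, both sides being the alternating $\bbQ_p$-bilinear form on $\wt{\mu_{p^\infty}} \times \bbQ_p$ having $\wt{\mu_{p^\infty}}$ isotropic and induced pairing $\wt{\mu_{p^\infty}} \times \bbQ_p \to \wt{\mu_{p^\infty}}$, $(a,b) \mapsto a^b$.

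Granting this identification, the first torsor axiom --- that the map $U^\circ_p \times \Ig_{\CS}^1 \to \Ig_{\CS}^1 \times_{\Ig_{\Katz}} \Ig_{\CS}^1$ is an isomorphism --- is immediate: any two splittings of $\calE_x$ differ by a unique homomorphism in $\Hom(\bbQ_p/\bbZ_p, \mu_{p^\infty}) = U^\circ_p$, so the fibres of $\pi$ are pseudo-torsors under $U^\circ_p$, and under the identification above this simply transitive action is the restriction of the $U^\circ_p$-action on $\Ig_{\CS}^1$ (composition of $\varphi_p$ with $\begin{pmatrix} 1 & u \\ 0 & 1 \end{pmatrix}$ fixes $\wh{\varphi}_p$ and $\varphi_p^\et$, so the action is $\pi$-equivariant, and one checks directly that it translates splittings as expected; compare the Serre--Tate description of \S\ref{sss:explicit-description-unipotent}). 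It therefore remains to show $\pi$ is an fpqc cover, and since $U^\circ_p = T_p\mu_{p^\infty}$ is affine and faithfully flat over the base, this follows by fpqc descent of ``affine and faithfully flat'' once we know $\pi$ admits a section after a faithfully flat base change --- equivalently, that the universal extension $\calE_{\univ}$ over $\bbV_{\Katz}/p^n$ splits fpqc-locally for every $n$.

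A splitting of $\calE_{\univ}$ is precisely a lift of the constant section $1$ along the morphism of $p$-adic Tate-module sheaves $T_p E_{\univ}[p^\infty] \to T_p(\bbQ_p/\bbZ_p) = \underline{\bbZ_p}$, whose kernel is $T_p\mu_{p^\infty} = U^\circ_p$; so the remaining point --- and the step I expect to be the main obstacle --- is to show this morphism is surjective as a map of fpqc sheaves. One proves this by climbing the tower: for each $m$ the multiplication-by-$p$ map $E_{\univ}[p^{m+1}] \to E_{\univ}[p^m]$ and the \'etale-quotient map $E_{\univ}[p^m] \to \underline{\bbZ/p^m}$ are finite faithfully flat (with kernels $E_{\univ}[p]$ and $\mu_{p^m}$ respectively), so starting from a lift of $1$ at level $1$ over a finite flat cover one inductively lifts to level $m+1$ over a further finite flat cover; the colimit of the coordinate rings of these covers is a faithfully flat $\bbV_{\Katz}/p^n$-algebra over which $(1)_m$ lifts to an element of $T_p E_{\univ}[p^\infty]$, i.e.\ over which $\calE_{\univ}$ splits. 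Combining this with the previous paragraph shows that $\pi$ is an fpqc $U^\circ_p$-torsor.
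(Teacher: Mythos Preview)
Your proof is correct and follows essentially the same approach as the paper: both identify the fibre of $\pi$ over a point with the set of splittings of the associated extension $\calE_x$, observe that two splittings differ by a unique element of $U_p^\circ = T_p\mu_{p^\infty}$, and establish fpqc-local existence of a splitting by climbing the tower of fibres of $E[p^n] \to \underline{\bbZ/p^n}$ over $1/p^n$ (the paper phrases this as the fibre of $T_pE \to \underline{\bbZ_p}$ over $1$ being pro-finite-flat). Your only organisational difference is in deducing that $\pi$ is an fpqc cover by descent of ``affine and faithfully flat'' from the trivialised torsor, whereas the paper simply notes directly that the constructed cover is fpqc; this is a cosmetic distinction.
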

\begin{proof}
We first show the map is surjective: given 
\[ (E, \wh{\varphi}_p, \varphi_p^\et, \varphi_{\bbA_f^{(p)}}) \in \Ig_{\Katz}(R) \]
consider the induced extension 
\[ 1 \rightarrow \mu_{p^\infty} \rightarrow E[p^\infty] \rightarrow \bbQ_p/\bbZ_p \rightarrow 1. \]
We must find an fpqc extension of $R$ where this extension is split. This is given by the fiber of 
\[ T_p E \rightarrow T_p \l(\bbQ_p/\bbZ_p\r) = \bbZ_p \]
over $1 \in \bbZ_p$, where there is a canonical splitting (recall $T_p E = \Hom(\bbQ_p/\bbZ_p, E[p^\infty])$). Indeed, this fiber is the inverse limit of the fibers of $E[p^n]$ over $1/p^n \in \bbQ_p/\bbZ_p$, each of which is finite flat over $R$ (as a clopen subscheme of $E[p^n]$ that surjects onto $R$). The inverse limit is pro-finite-flat, and thus, in particular, an fpqc cover. 

To see that the map is a torsor, we first observe that from the definition of the action it is clear that $U^\circ_p$ preserves the projection. Moreover, any two pre-images correspond to two different splittings, and thus differ by an element of $U^\circ_p$. Finally, the action of $U^\circ_p$ is faithful because the data away from $p$ already rigidifies the moduli problem. 

\end{proof}

\subsection{Residual action on the quotient}
We write $N(U_p^\circ)$ for the normalizer of $U_p^\circ$ in $\l(B_p \times G^{(p)}\r)^1$. As a consequence of Lemma \ref{lemma:torsor}, which presents $\Ig_{\Katz}$ as a quotient of $\Ig_\CS^1$ by $U_p^\circ$, we find that $N(U_p^\circ)/U_p^\circ$ acts on $\Ig_\Katz$.

Writing out the matrix presentation, we see that $N(U_p^\circ)$ is the subgroup of elements 
\[ g=\begin{pmatrix} a & b \\ 0 & c \end{pmatrix} \times g^{(p)} \in B_p \times G^{(p)}\]
such that $|a|_p=|c|_p$ and $\wt{\det}(g)=1$. This subgroup is isomorphic to
\[  \wt{\mu_{p^\infty}}  \rtimes \l( \ul{\bbZ_p^\times} \times \ul{p^\bbZ} \times G^{(p)} \r) \]
via the map 
\[ u \rtimes \l( (a, p^k) \times (g_\ell)_{\ell \neq p} \r) \mapsto \begin{pmatrix} a p^k & u \\ 0 & a^{-1}p^k \prod_{\ell \neq p} |\det(g_\ell)|_\ell^{-1} \end{pmatrix} \times (g_\ell)_{\ell \neq p}. \]
Thus, 
\[ N(U_p^\circ)/U_p^\circ = \wh{\bbG_m} \rtimes \l( \ul{\bbZ_p^\times} \times \ul{p^\bbZ} \times G^{(p)} \r)   \]
where here we have used that $\wt{\mu_{p^\infty}}/T_p \mu_{p^\infty} = \mu_{p^\infty} = \wh{\bbG_m}.$ 

In particular, we obtain an action of the subgroup $\wh{\bbG_m}$, that is the titular ``unipotent circle action" on $\Ig_\Katz$. 
Comparing with \ref{sss:moduli-ig-katz}, we see that the action of the subgroup $\ul{\bbZ_p^\times} \times G^{(p)}$ agrees with the standard moduli action on $\Ig_{\Katz}$, and thus we immediately obtain the compatibility between the $\wh{\bbG_m}$-action and the standard moduli action described in Theorem \ref{maintheorem:big-hecke}. 

\begin{remark}\newcommand{\diag}{\mathrm{diag}}\label{remark:frobenius}
The subgroup $p^\bbZ$ acts as powers of the classical diamond operator at $p$. Similarly, we obtain the Hecke operator $U_p$ and the canonical Frobenius lift from the Hecke action of $\diag(p,1)$ and $\diag(1,p)$. 
\end{remark}

\subsection{Differentiating the action}\label{subsection:differentiation}
We have now constructed the $\wh{\bbG_m}$-action and proved the desired compatibility with the standard moduli action on $\Ig_{\Katz}.$ To complete the proof of Theorem \ref{maintheorem:big-hecke}, we must prove a claim about the derivative of the $\wh{\bbG_m}$-action. We recall the setup now:

We consider the action map
\[ \wh{\bbG_m} \times \Ig_{\Katz} \rightarrow \Ig_{\Katz}, \]
To differentiate it, we compose with the tangent vector $t\partial_t$ at the identity in $\wh{\bbG_m}$. The latter is given by a map $D \rightarrow \bbG_m$ which in coordinates is
\[ \bbZ_p[t^{\pm 1}] \rightarrow \bbZ_p[\epsilon]/\epsilon^2,  \; t \mapsto 1 + \epsilon . \] 
Thus, the composition of the action map with $t \partial_t$ gives a vector field on $\Ig_\Katz$ described as a map
\[ t_{\bighecke}: D \times \Ig_\Katz \rightarrow \Ig_\Katz. \] 

On the other hand, we have the universal extension 
\[ \calE_{E_\univ[p^\infty], \wh{\varphi}_\univ, \varphi^\et_\univ}: 1 \rightarrow \mu_{p^\infty} \rightarrow E_\univ[p^\infty] \rightarrow \bbQ_p/ \bbZ_p \rightarrow 1 \]
(cf. (\ref{equation:igusa-data-extension})), and, as explained in \ref{subsection:the-canonical-trivialization}, this extension gives rise to a differential ${d\tau \in \Omega_{\Ig_\Katz}.}$ To complete our proof of Theorem \ref{maintheorem:big-hecke}, we show
\begin{theorem}\label{theorem:differentiation-action-on-igusa}
Notation as above,
\[ d\tau(t_\bighecke) = 1 \]
\end{theorem}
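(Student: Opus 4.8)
The plan is to verify $d\tau(t_{\bighecke}) = 1$ in $\calO(\Ig_{\Katz})$ fibrewise, after pulling back along the faithfully flat cover $\pi\colon\Ig_{\CS}^1\to\Ig_{\Katz}$ of Lemma~\ref{lemma:torsor}. So I would fix a point $x\in\Ig_{\CS}^1(B)$ with image $\bar x\in\Ig_{\Katz}(B)$ and, shrinking $B$ by an fpqc cover if necessary, a corresponding triple $(E,\varphi_p,\varphi_{\bbA_f^{(p)}})$; over $B$ the extension $\calE$ is then split by $\varphi_p$, giving the canonical basis $\omega_{\can},u_{\can}$ of $\Lie\bE(E[p^\infty]^\vee)$. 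Unwinding \ref{sss:connections-and-vector-fields}, the number $d\tau(t_{\bighecke})|_{\bar x}\in B$ is the $u_{\can}$-component of the image of $\omega_{\can}$ under the crystalline parallel transport over the square-zero (hence divided-power) thickening $B[\epsilon]/\epsilon^2\to B$ attached to the two maps to $\Ig_{\Katz}$ given by the section $\bar x$ and the section $t_{\bighecke}|_{\bar x} = (1+\epsilon)\cdot\bar x$. By the crystalline description of $\Lie\bE(-^\vee)$ recorded in diagram~(\ref{diagram:universal-covers-and-vector-extensions}), this parallel transport is the Messing comparison between the trivial deformation $E[p^\infty]_{B[\epsilon]/\epsilon^2}$ of $E[p^\infty]_B$ and the deformation $G'$ underlying the translated point $(1+\epsilon)\cdot\bar x$.

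The next step, which I expect to be the main conceptual obstacle, is to identify $G'$. After a further fpqc localization I would lift $1+\epsilon\in\wh{\bbG_m}(B[\epsilon]/\epsilon^2)$ to $\tilde u = (1+\epsilon,u_1,u_2,\dots)\in\wt{\mu_{p^\infty}}(B[\epsilon]/\epsilon^2)$, so that $(1+\epsilon)\cdot\bar x = \pi(\tilde u\cdot x)$, and then invoke the explicit description \ref{sss:explicit-description-unipotent} of the $U_p$-action with the nilpotent divided-power ideal $I = (\epsilon)$: the reduction $v := \tilde u\bmod(\epsilon) = (1,u_1\bmod\epsilon,u_2\bmod\epsilon,\dots)$ lies in $T_p\mu_{p^\infty}(B) = \Hom(\bbQ_p/\bbZ_p,\mu_{p^\infty})(B)$, and $\tilde u\cdot x$ corresponds to the Serre--Tate lift whose $p$-divisible group $G'$ is the lift of $E[p^\infty]_B$ along the twisted trivialization $\begin{pmatrix}1&v\\0&1\end{pmatrix}\circ\varphi_p$. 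Because $\begin{pmatrix}1&v\\0&1\end{pmatrix}$ acts trivially on the connected part and the quotient, the identity map of $\mu_{p^\infty}\times\bbQ_p/\bbZ_p$ realizes an isomorphism $G'\xrightarrow{\sim}E[p^\infty]_{B[\epsilon]/\epsilon^2}$ lifting the unipotent automorphism $\psi = \mathrm{id} + j\circ v\circ\pi^{\et}$ of $E[p^\infty]_B$, where $j\colon\mu_{p^\infty} = E[p^\infty]^\circ\hookrightarrow E[p^\infty]$ and $\pi^{\et}\colon E[p^\infty]\to E[p^\infty]/E[p^\infty]^\circ = \bbQ_p/\bbZ_p$ are the canonical maps; moreover this isomorphism intertwines the canonical bases of the two sides.

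From here I would argue exactly as in the proof of Theorem~\ref{theorem:dworkequation}. By the uniqueness in Messing's theorem the comparison on $\Lie\bE(-^\vee)$ over $B[\epsilon]/\epsilon^2$ differs from the (basis-preserving) identity isomorphism by the induced crystal automorphism $\bD(\psi)$, and by contravariant additivity $\bD(\psi) = \mathrm{id} + \bD(\pi^{\et})\circ\bD(v)\circ\bD(j)$. Now $\bD(j)(\omega_{\can}) = \tfrac{dt}{t}$ and $\bD(j)(u_{\can}) = 0$ (since $\pi^{\et}\circ j = 0$); $\bD(\pi^{\et})(t\partial_t) = u_{\can}$ by the definition of $u_{\can}$ in \ref{subsection:the-canonical-trivialization}; and, crucially, $\bD(v)\colon\bbG_a\cdot\tfrac{dt}{t}\to\bbG_a\cdot t\partial_t$ is multiplication by $\log(1+\epsilon) = \epsilon$ by Lemma~\ref{lemma:vector-extension-map} applied with the ideal $I = (\epsilon)$. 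Composing, $\bD(\psi)(\omega_{\can}) = \omega_{\can} + \epsilon\,u_{\can}$ and $\bD(\psi)(u_{\can}) = u_{\can}$, so the parallel transport is $\begin{pmatrix}1&0\\\epsilon&1\end{pmatrix}$ in the canonical basis, which, matched with the normalization of \ref{sss:connections-and-vector-fields}, is exactly the assertion $d\tau(t_{\bighecke}) = 1$.

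The remaining points requiring care are routine: the fpqc-descent set-up (reducing to a single point, lifting $1+\epsilon$ to $\wt{\mu_{p^\infty}}$, and using Lemma~\ref{lemma:torsor} with \ref{sss:explicit-description-unipotent} to see that $(1+\epsilon)\cdot\bar x$ is genuinely described by the Serre--Tate lift $G'$), and the bookkeeping of dualities and of the orientation of the Messing comparison relative to $\nabla_t$ — in particular confirming that the relevant universal-cover element has zeroth component $1+\epsilon$, so that the off-diagonal term is $+\epsilon$, rather than the $(1+\epsilon)^{-1}$ producing the minus sign in the proof of Theorem~\ref{theorem:dworkequation}.
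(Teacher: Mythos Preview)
Your proposal is correct and follows essentially the same route as the paper: pass to the flat cover $\Ig_{\CS}^1$ (the paper further adjoins $(1+\epsilon)^{1/p^\infty}$ where you abstractly fpqc-lift $1+\epsilon$ to $\wt{\mu_{p^\infty}}$), use the explicit unipotent action of \ref{sss:explicit-description-unipotent} to identify the deformed $p$-divisible group, and then compute the Messing comparison via Lemma~\ref{lemma:vector-extension-map} to obtain the matrix $\begin{pmatrix}1&0\\\epsilon&1\end{pmatrix}$ in the canonical basis. Your decomposition $\bD(\psi)=\Id+\bD(\pi^{\et})\circ\bD(v)\circ\bD(j)$ is a slightly more functorial phrasing of what the paper writes as ``the map induced by $g_\epsilon^t$'', but the content is identical.
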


\begin{proof}
It suffices to work over $\bbZ/p^n$ for arbitrary $n$. We abbreviate $S=\Ig_{\Katz}|_{\bbZ/p^n}$ and $R=\bbV_\Katz/p^n$ so that $S=\Spec R$. We write $\pi: E \rightarrow S$ for the universal elliptic curve up to prime-to-$p$-isogeny over $S$ and $\wh{\varphi}$, $\varphi^\et$ for the universal trivializations of $\wh{E}$ and $E[p^\infty]/\wh{E}$. 

We recall the definition of $d\tau$ (mod $p^n$): we have the canonical extension 
\[ \calE_{E[p^\infty], \wh{\varphi}, \varphi^\et}: 1 \rightarrow \mu_{p^\infty} \rightarrow E[p^\infty] \rightarrow \bbQ_p/\bbZ_p \rightarrow 1 \]
and the induced trivialization $\omega_\can$, $u_\can$ of the vector bundle $\Lie \bE(E[p^\infty])$ with connection $\nabla_\crys$. Then $d\tau$ is defined by the equation
\[ \nabla_\crys (\omega_\can) = u_\can d\tau.\]

As in \ref{sss:connections-and-vector-fields}, we write $\nabla_{\crys, t_\bighecke}$ for the isomorphism 
\[ t_\bighecke^* \Lie \bE(E[p^\infty]) \xrightarrow{\sim} 0^* \Lie \bE(E[p^\infty]) \]
over $D\times S$ induced by $\nabla_\crys$. In light of (\ref{equation:connection-isomorphism}), it suffices to show that 
\begin{equation}\label{equation:suffices-to-show-differentiating} \nabla_{\crys, t_\bighecke}(\omega_\can)= \omega_\can + \epsilon \cdot u_\can. \end{equation}

Now, we have that ${t_\bighecke}=(1+\epsilon) \cdot 0 $ via the $\wh{\bbG_m}$ action, where we view the tangent vectors ${t_\bighecke}$ and $0$ as $R[\epsilon]$-points of $S$ and $(1+\epsilon)$ as an $R[\epsilon]$-point of $\widehat{\bbG_m}$. 
Writing
\[ (E_{0}, \wh{\varphi}_0, \varphi^\et_0, \alpha_0) \]
for the quadruple classified by $0$ and 
\[ (E_{\bighecke}, \wh{\varphi}_\bighecke, \varphi^\et_\bighecke, \alpha_{\bighecke}) \]
for the quadruple classified by $t_\bighecke$, we have
\begin{equation}\label{equation:multiplication-big-hecke} (1+\epsilon) \cdot (E_{0}, \wh{\varphi}_0, \varphi^\et_0, \alpha_0) = (E_{\bighecke}, \wh{\varphi}_\bighecke, \varphi^\et_\bighecke, \alpha_{\bighecke}) \end{equation}

In particular, $\nabla_{\crys, t_\bighecke}$ is identified with the Messing isomorphism
\[ \Lie \bE( E_\bighecke[p^\infty]^\vee )\xrightarrow{\sim}  \Lie \bE(E_0[p^\infty]^\vee) \]
induced by the isomorphism
\[ E_\bighecke \mod \epsilon = E_0 \mod \epsilon \]
given by $1+\epsilon=1 \mod \epsilon$ and (\ref{equation:multiplication-big-hecke}). 

To compute this, we pass to the flat cover 
\[ S_\infty = \Spec \l(\VV_{\CS}^1/p^n\r)[\epsilon, (1+\epsilon)^{1/p^\infty}]/\epsilon^2. \]
Over $\l(\bbV_{CS}^1/p^n\r)[\epsilon]$ and thus over $S_\infty$, we have a canonical splitting of $\calE_{E_0[p^\infty], \wh{\varphi}_0, \varphi^\et}$ which gives an isomorphism
\[ \varphi_0: E_0[p^\infty]_{R_\infty} \xrightarrow{\sim} \widehat{\GG_m} \times \QQ_p/\ZZ_p. \]

If we let 
\[  g_\epsilon := \begin{pmatrix} 1 & (1+\epsilon, (1+\epsilon)^{1/p}, \ldots) \\ 0 & 1 \end{pmatrix} \in U_p(S_\infty), \]
our description of the unipotent action in \ref{sss:explicit-description-unipotent} then shows that over $S_\infty$, $E_\bighecke$ is the Serre-Tate lift to $S_\infty$ corresponding to the isomorphism 
\[  g_\epsilon \circ \varphi_0: E_0|_{S_\infty/\epsilon}[p^\infty] \rightarrow \wh{\bbG_m} \times \bbQ_p/\bbZ_p. \]
Thus, the Messing isomorphism in the canonical basis is identified over $S_\infty$ with the map
\[ \Lie \bE (\bbQ_p/\bbZ_p \times \wh{\bbG_m}) \rightarrow \Lie \bE(  \bbQ_p/\bbZ_p \times \wh{\bbG_m} ) \]
induced by $g_{\epsilon}^{t}$. If we write this in the canonical basis we get a  map
\[  \bbG_a \frac{dt}{t} \times \bbG_a t \partial_t \rightarrow \bbG_a \frac{dt}{t} \times \bbG_a t \partial_t,\]
and, by Lemma \ref{lemma:vector-extension-map}, it is given by 
\[ \begin{pmatrix} 1 & 0 \\ \epsilon & 1\end{pmatrix}. \]
By construction, these bases are identified with the bases $\omega_\can, u_\can$, and thus we obtain equation (\ref{equation:suffices-to-show-differentiating}), concluding the proof. 

\end{proof}

\section{Local expansions}\label{sec:local-expansions}
In this section we compute the $\wh{\bbG}_m$-action on Serre-Tate ordinary and cuspidal $q$-expansions (i.e. on the formal neighborhood of an $\overline{\bbF_p}$-point of $\Ig_{\Katz}$ and on the punctured formal neighborhood of a cusp). In both cases, the action is given by a simple multiplication of the canonical coordinate (cf. Corollaries \ref{corollary:action-on-Tate} and \ref{corollary:action-on-Serre-Tate} below for precise statements). 

In fact, both computations are special cases of a more general statement, Theorem \ref{theorem:computation-of-action} below, which computes the action on a point whenever the associated $p$-divisible group is a Kummer $p$-divisible group (as defined in \ref{subsection:kummer-p-divisible-groups}). 

The statement of Theorem \ref{theorem:computation-of-action} begs the question: is the $p$-divisible group of the universal elliptic curve over $\Ig_\Katz$ (with its extension structure) a Kummer $p$-divisible group? Indeed, one can find claims that the local Serre-Tate coordinates extend to a function $q$ on $\Ig_{\Katz}$, which would imply this group was Kummer. However, these claims are flawed, and indeed the universal extension is not Kummer; we take a brief detour in \ref{subsec:not-kummer} to dispel these myths.

\subsection{Another moduli interpretation for $\Ig_\Katz$}\label{ss:another-moduli-katz}
To state our general computation cleanly, we introduce a third moduli interpretation for $\Ig_{\Katz}$ that puts the emphasis on $p$-divisible group and its extension structure (as in Remark \ref{remark:igusa-data-extension}). 

Let $R$ be a $p$-adically complete ring and let $\pi$ be topologically nilpotent for the $p$-adic topology on $R$. Combining the second moduli interpretation of $\Ig_{\Katz}$ (cf. \ref{sss:moduli-ig-katz}) and Serre-Tate lifting theory (cf. \ref{ss:serre-tate}), we obtain an identification
\[ \Ig_{\Katz}(R) = \{ (E_0, \calE, \psi, \varphi_{\bbA_f^{(p)}} )\} / \sim \]
where $E_0$ is an elliptic curve up to quasi-prime-to-$p$-isogeny over $R/\pi$, $\calE$ is an extension of $p$-divisible groups over $R$
\[ \calE: 1 \rightarrow \mu_{p^\infty} \rightarrow G_{\calE} \rightarrow \bbQ_p/\bbZ_p \rightarrow 1, \]
$\psi: E_0[p^\infty] \xrightarrow{\sim} G_{\calE}|_{R/\pi}$ is an isomorphism, and $\varphi_{\bbA_f^{(p)}}$ is infinite prime-to-$p$ level structure on $E_0$, all subject to a compatibility with the Weil pairing as in the second moduli interpretation of $\Ig_{\Katz}$.

\subsection{Computing the action on Kummer extensions}\label{ss:computing-the-action}
Recall from \ref{subsection:kummer-p-divisible-groups} that there is a Kummer construction which, given $q \in \GG_m(R)$ produces an extension of $p$-divisible groups over $R$
\[ \calE_q: 1 \rightarrow \mu_{p^\infty} \rightarrow G_q \rightarrow \QQ_p/\ZZ_p \rightarrow 1. \] 

Using the explicit description of the $\wh{\bbG_m}$-action in \ref{sss:explicit-description-unipotent}, we find
\begin{theorem}\label{theorem:computation-of-action}
Suppose $\zeta \in \widehat{\GG_m}(R)$ and $\pi\in R$ is such that $\zeta \equiv 1 \mod \pi$, and $x \in \Ig_{\Katz}(R)$ is represented by the quadruple $(E_0, \calE_q, \psi, \varphi_{\bbA_f^{(p)}})$ (in the sense of \ref{ss:another-moduli-katz})  
for some $q \in \GG_m(R)$.  Then 
\[ \zeta \cdot x = (E_0, \calE_{\zeta^{-1} q}, \psi', \varphi_{\bbA_f^{(p)}}) \]
where $\psi'$ is the composition of $\psi$ with the canonical identification 
\[ \calE_q|_{R/\pi}=\calE_{\zeta^{-1}q}|_{R/\pi} \]
coming from $q \equiv \zeta^{-1}q \mod \pi$. 
\end{theorem}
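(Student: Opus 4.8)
The plan is to reduce, by faithfully flat descent, to a computation after base change along an fpqc cover $R \to R''$ over which both $q$ and $\zeta$ acquire compatible systems of $p$-power roots --- for instance one may take $R'' = R[q^{1/p^\infty}, \zeta^{1/p^\infty}]$, a filtered colimit of finite free $R$-algebras, so that $R \to R''$ is faithfully flat, hence injective. Since $\zeta \cdot x$ and the claimed description $(E_0, \calE_{\zeta^{-1}q}, \psi', \varphi_{\bbA_f^{(p)}})$ are both honest points of $\Ig_{\Katz}(R)$, it suffices to show they agree after base change to $R''$.

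First I would produce a lift of $x_{R''}$ along the torsor $\Ig_{\CS}^1 \to \Ig_{\Katz}$ of Lemma \ref{lemma:torsor}. Over $R''$, the Kummer section $1/p^n \mapsto (q^{1/p^n}, 1/p^n)$ splits $\calE_q$; composing the resulting trivialization $G_q|_{R''} \cong \mu_{p^\infty} \times \bbQ_p/\bbZ_p$ with the identification $E_0[p^\infty] \cong G_q$ furnished by $\psi$ and Serre--Tate gives an integral ordinary infinite level structure on the elliptic curve $E_{R''}$ underlying $x$, hence a point $\tilde{x} \in \Ig_{\CS}^1(R'')$ over $x_{R''}$. (That $\tilde{x}$ lands in the polarized $\Ig_{\CS}^1$, and not merely $\Ig_{\CS}$, is exactly the compatibility of polarization conditions recorded just above Lemma \ref{lemma:torsor}.) On the other side, the roots $\zeta^{1/p^n}$ --- which still lie in $\wh{\bbG_m}$, since $\zeta \equiv 1 \bmod \pi$ --- assemble to $u = (\zeta, \zeta^{1/p}, \zeta^{1/p^2}, \dots) \in U_p(R'') = \wt{\mu_{p^\infty}}(R'')$, whose image in $U_p/U_p^\circ = \wh{\bbG_m}$ is $\zeta$. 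Since $U_p$ is abelian it lies in $N(U_p^\circ)$, and by the construction of the residual $\wh{\bbG_m}$-action on $\Ig_{\Katz} = \Ig_{\CS}^1/U_p^\circ$, the point $\zeta \cdot x_{R''}$ is the image in $\Ig_{\Katz}(R'')$ of $u \cdot \tilde{x}$.

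It remains to compute $u \cdot \tilde{x}$, and here I would use the explicit description of the unipotent moduli action in \ref{sss:explicit-description-unipotent}. Working modulo $p^n$ and choosing a nilpotent ideal $I \subset R''$ with $\zeta - 1 \in I$ (possible since $\zeta \equiv 1 \bmod \pi$), the reduction $u \bmod I$ is the automorphism of $(\mu_{p^\infty} \times \bbQ_p/\bbZ_p)_{R''/I}$ attached to the homomorphism $1/p^n \mapsto \zeta^{1/p^n} \bmod I$ in $U_p^\circ(R''/I) = \Hom(\bbQ_p/\bbZ_p, \mu_{p^\infty})(R''/I)$, and $u \cdot \tilde{x}$ has underlying curve the Serre--Tate lift $E'$ determined, as in \ref{sss:explicit-description-unipotent}, by composing the reduced level structure at $p$ of $\tilde{x}$ with the automorphism $u \bmod I$. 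The crux is to identify $E'[p^\infty]$ together with its extension structure. For this I would apply Lemma \ref{lemma:maps-between-kummer} to the compatible roots $\zeta^{-1/p^n}$ of $\zeta^{-1} = (\zeta^{-1}q)/q$, getting an isomorphism of extensions $\calE_q \xrightarrow{\sim} \calE_{\zeta^{-1}q}$ over $R''$; a direct comparison of the two Kummer trivializations then shows that, transported through the canonical identification $\calE_q|_{R''/I} = \calE_{\zeta^{-1}q}|_{R''/I}$ (valid since $\zeta \equiv 1 \bmod I$), the trivialization of $\calE_{\zeta^{-1}q}$ differs from that of $\calE_q$ by left-composition with precisely the automorphism $u \bmod I$. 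By Serre--Tate lifting theory (Theorem \ref{thm:STL}) this forces $E'[p^\infty] \cong G_{\zeta^{-1}q}$ as an extension, compatibly with the reduction $\psi' = (\text{canonical identification}) \circ \psi$; passing to $\Ig_{\Katz}$ and matching the data induced on the formal group and on the étale quotient with the canonical sub- and quotient-groups of $G_{\zeta^{-1}q}$ identifies the image of $u \cdot \tilde{x}$ with $(E_0, \calE_{\zeta^{-1}q}, \psi', \varphi_{\bbA_f^{(p)}})$ over $R''$. Descent along $R \to R''$ then finishes.

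The hard part will be the bookkeeping in this last step: one must keep straight the direction of each Serre--Tate lift and of the identification $E[p^\infty] \cong G_q$ extending $\psi$, the Weil-pairing/polarization compatibilities ensuring $\tilde{x} \in \Ig_{\CS}^1$, and --- most delicately --- the appearance of $\zeta^{-1}$ rather than $\zeta$ in the answer, which traces back to the inverse implicit in Lemma \ref{lemma:maps-between-kummer} when matching the roots of $\zeta^{-1}$ (hence the change of parameter $q \rightsquigarrow \zeta^{-1}q$) against the unipotent twist by $u$ with $u_0 = \zeta$. Granting the facts already in place --- the descent formalism, the structure of the residual action, the explicit unipotent action of \ref{sss:explicit-description-unipotent}, and the Kummer machinery of Section \ref{section:extensions} --- the remaining verifications are formal.
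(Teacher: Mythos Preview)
Your proposal is correct and follows essentially the same route as the paper's proof: pass to the faithfully flat cover $R_\infty = R[q^{1/p^\infty},\zeta^{1/p^\infty}]$, lift $x$ to $\Ig_{\CS}^1(R_\infty)$ using the Kummer splitting $1/p^n \mapsto (q^{1/p^n},1/p^n)$ of $\calE_q$, lift $\zeta$ to $\tilde{\zeta}=(\zeta^{1/p^n})_n \in U_p(R_\infty)$, and compare. The only organizational difference is that the paper writes down \emph{both} lifts $\tilde{x}$ and $\tilde{y}$ at once (the latter via the splitting $1/p^n \mapsto (\zeta^{-1/p^n}q^{1/p^n},1/p^n)$ of $\calE_{\zeta^{-1}q}$) and then checks $\tilde{\zeta}\cdot\tilde{x}=\tilde{y}$ by a single commutative square mod $(\zeta-1)$, whereas you compute $u\cdot\tilde{x}$ via the Serre--Tate description of \ref{sss:explicit-description-unipotent} and then identify the result; this is the same verification unwound, and your invocation of Lemma~\ref{lemma:maps-between-kummer} is exactly what makes that square commute.
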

\begin{proof}
If we write  $y$ for the point represented by $(E_0, \calE_{\zeta^{-1} q}, \psi', \varphi_{\bbA_f^{(p)}})$ then it suffices to show that over the extension $R_\infty=R[q^{1/p^\infty}, \zeta^{1/p^\infty}]$, there are lifts $\tilde{x}$ and $\tilde{y}$ of $x$ and $y$ to $\Ig_{\CS}^1(R_\infty)$ and a lift $\tilde{\zeta}$ of $\zeta$ in $\wt{\mu_{p^\infty}}(R_\infty)$ such that $\tilde{\zeta}\cdot \tilde{x}=\tilde{y}$. The desired lifts are given by the splittings $1/p^n \mapsto (q^{1/p^n}, 1/p^n)$ and $1/p^n \mapsto \zeta^{-1/p^n} q^{1/p^n}$ of $\calE_q$ and $\calE_{\zeta^{-1}q}$, respectively, and $\tilde{\zeta}=(\zeta^{1/p^n})_n$. That $\tilde{\zeta} \cdot \tilde{x}_1 = \tilde{x}_2$ then follows from commutativity of the following diagram mod $\zeta - 1$:
\[ \xymatrix{
G_q \ar[r]^{=} & G_{\zeta^{-1}q} \\
 & \\
\mu_{p^\infty} \times \bbQ_p/\bbZ_p \ar[uu]^{1/p^n \mapsto (q^{1/p^n}, 1/p^n)} \ar[r]_{\left(\begin{smallmatrix}1 & \tilde{\zeta} \\ 0 & 1 \end{smallmatrix}\right)} & \mu_{p^\infty} \times \bbQ_p/\bbZ_p \ar[uu]_{1/p^n \mapsto (\zeta^{-1/p^n}q^{1/p^n}, 1/p^n)}
}
\]
\end{proof}

\subsection{Action on Serre-Tate coordinates}
We now compute the local expansion in the formal neighborhood of an $\overline{\bbF_p}$-point of $\Ig_\Katz$. So, fix
\[ (E_0, \wh{\varphi}_0, \varphi^\et_0, \varphi_{\bbA_f^{(p)},0}) \in \Ig_{\Katz}(\overline{\bbF_p}). \]

It follows from the work of Serre-Tate (as described in \cite{katz:serre-tate}) that the formal neighborhood of this point is $\Spf R$ for $R$ a smooth complete 2-dimensional local ring over $W(\overline{\bbF_p})$. The data attached to the induced $\Spf R \rightarrow \Ig_{\Katz}$
\[ (E, \wh{\varphi}, \varphi^\et, \varphi_{\bbA_f^{(p)}}) \]
identifies $E$ with the universal deformation of $E_0$, and the level structures with their unique deformations. 

Moreover, the Serre-Tate coordinate $q \in 1 + \frakm_R = \wh{\bbG_m}(R)$ of the induced extension
\[ \calE_{E[p^\infty], \wh{\varphi}, \varphi^\et, \varphi_{\bbA_f^{(p)}}}: 1 \rightarrow \mu_{p^\infty} \rightarrow E[p^\infty] \rightarrow \bbQ_p/\bbZ_p \rightarrow 1 \]
gives an isomorphism
\[ q: \Spf R \xrightarrow{\sim} \wh{\bbG_m}. \]
Recall from Remark \ref{remark:alternate-construction} that the extension $\calE_{E[p^\infty], \wh{\varphi}, \varphi^\et, \varphi_{\bbA_f^{(p)}}}$ is equal to the Kummer extension $\calE_{q^{-1}}$. Thus, invoking Theorem \ref{theorem:computation-of-action}, we deduce 
\begin{corollary}\label{corollary:action-on-Serre-Tate} The $\wh{\bbG_m}$-action on $\Ig_\Katz$ preserves $\Spf R \subset \Ig_{\Katz}$, and the Serre-Tate coordinate
\[ q: \Spf R \xrightarrow{\sim} \wh{\bbG_m} \]
identifies the $\wh{\bbG_m}$-action on $\Spf R$ with multiplication on $\wh{\bbG_m}$. 
\end{corollary}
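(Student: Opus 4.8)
The plan is to deduce Corollary~\ref{corollary:action-on-Serre-Tate} directly from Theorem~\ref{theorem:computation-of-action} together with the identification of the formal deformation neighborhood coming from Serre-Tate theory. Concretely, the setup already in place gives us a point $x \in \Ig_{\Katz}(R)$, where $R$ is the completed local ring at the chosen $\overline{\bbF_p}$-point, classified (in the sense of \ref{ss:another-moduli-katz}) by a quadruple $(E_0, \calE, \psi, \varphi_{\bbA_f^{(p)}})$ in which $E_0$ is fixed (the chosen point mod $p$), $\varphi_{\bbA_f^{(p)}}$ is its unique deformation, and $\calE$ is the extension $\calE_{E[p^\infty], \wh{\varphi}, \varphi^\et}$. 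By Remark~\ref{remark:alternate-construction} (and Example~\ref{example:kummer-groups}-(2)) this extension is the Kummer extension $\calE_{q^{-1}}$ for the Serre-Tate coordinate $q \in 1 + \frakm_R$, and $q$ defines an isomorphism $\Spf R \xrightarrow{\sim} \wh{\bbG_m}$.

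The key point is then the following: for $\zeta \in \wh{\bbG_m}(R) = 1 + \frakm_R$, we want to identify $\zeta \cdot x$ as a point of $\Ig_{\Katz}(R)$ and show it corresponds, under the coordinate $q$, to the point $\zeta q$. First I would note that $\zeta \equiv 1$ modulo the ideal $\frakm_R$ (or any ideal of definition containing $\zeta - 1$; since $R$ is complete local, $\zeta-1 \in \frakm_R$ and each $R/\frakm_R^k$ is a base over which the hypothesis of Theorem~\ref{theorem:computation-of-action} applies, and $\Spf R$ is the formal spectrum so it suffices to work modulo powers of $\frakm_R$ and pass to the limit). Applying Theorem~\ref{theorem:computation-of-action} with this $\zeta$ and with "$q$" there equal to our $q^{-1}$, we get that $\zeta \cdot x$ is represented by $(E_0, \calE_{\zeta^{-1} q^{-1}}, \psi', \varphi_{\bbA_f^{(p)}})$, with $\psi'$ the evident identification modulo $\frakm_R$. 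Since $\zeta^{-1} q^{-1} = (\zeta q)^{-1}$, this is exactly the quadruple classified by the point of $\Spf R$ whose Serre-Tate coordinate is $\zeta q$ — i.e. the image of $x$ under multiplication by $\zeta$ on $\wh{\bbG_m}$ via the identification $q$. In particular $\zeta \cdot x$ again lies in $\Spf R \subset \Ig_{\Katz}$, which gives the asserted stability, and the coordinate description is precisely that $q(\zeta \cdot x) = \zeta \cdot q(x)$, i.e. the action is translation.

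I would present this as: (i) recall that $\Spf R$ is the formal neighborhood and that the universal object over it is the universal deformation with its unique deformed level structures; (ii) invoke Remark~\ref{remark:alternate-construction} to write the extension as $\calE_{q^{-1}}$; (iii) fix $\zeta \in \wh{\bbG_m}(R)$, observe $\zeta - 1 \in \frakm_R$, work modulo $\frakm_R^k$, apply Theorem~\ref{theorem:computation-of-action}; (iv) simplify $\zeta^{-1}q^{-1} = (\zeta q)^{-1}$ and match with the quadruple having Serre-Tate coordinate $\zeta q$; (v) pass to the limit over $k$ to conclude the identity of $R$-points, hence stability of $\Spf R$ and the translation description. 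There is not really a serious obstacle here — the content is entirely in Theorem~\ref{theorem:computation-of-action} and the Serre-Tate identification, both already established. The only mild subtlety worth a sentence is making sure the hypothesis "$\zeta \equiv 1 \bmod \pi$ with $\pi$ topologically nilpotent" of Theorem~\ref{theorem:computation-of-action} is legitimately met: here one takes $R$ itself with the $\frakm_R$-adic (equivalently $p$-adic, since $R$ is a complete Noetherian local $W(\overline{\bbF_p})$-algebra with $\frakm_R \supset (p)$ after adjoining... ) — more carefully, one reduces modulo $\frakm_R^k$ for each $k$, where $\zeta - 1$ is nilpotent, applies the theorem there, and takes the inverse limit, which is harmless because $\Spf R = \varinjlim \Spec R/\frakm_R^k$ and both sides of the claimed identity are determined by their reductions.
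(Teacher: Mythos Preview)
Your proposal is correct and follows essentially the same route as the paper: identify the universal extension over $\Spf R$ as the Kummer extension $\calE_{q^{-1}}$ via Remark~\ref{remark:alternate-construction}, then apply Theorem~\ref{theorem:computation-of-action} to obtain $\zeta\cdot x$ with extension $\calE_{\zeta^{-1}q^{-1}}=\calE_{(\zeta q)^{-1}}$, hence Serre-Tate coordinate $\zeta q$. The paper states this in a single sentence without your extra care about reducing modulo $\frakm_R^k$, but that is a harmless elaboration of the same argument.
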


\begin{remark} The choice of a basis $x \in T_p E_0(\overline{\bbF_p})$ is equivalent the to the choice $\varphi^\et_0$, and via the Weil pairing, gives a corresponding choice of $\wh{\varphi}_0$, 
\[ ( \bullet , x): \wh{E_0} \rightarrow \mu_{p^\infty}. \]
This is how the data is presented, e.g., in \cite{katz:serre-tate}. 
\end{remark}

\subsection{The universal extension over $\Ig_\Katz$ is not Kummer}\label{subsec:not-kummer}
There are some instances in the literature where it is claimed that the local Serre-Tate coordinates $q$ extend to a \emph{function}\footnote{As explained by Chai \cite{chai:canonical}, there is a natural map of \'{e}tale sheaves that interpolates the local Serre-Tate coordinates.} on all of $\Ig_{\Katz}$ (or a rigid analytic incarnation). These claims are based on a misapplication of a theorem of Serre-Tate (cf. Katz \cite{katz:serre-tate}) classifying extensions of $\bbQ_p/\bbZ_p$ by $\wh{\bbG_m}$ over Artin local rings to the more general setting of rings where $p$ is nilpotent, where the classification no longer holds. 

In fact, the claim itself is incorrect, as we now explain. Already in characteristic $p$ the existence of such a global coordinate boils down to the claim that for $E_\univ$ the universal curve over $\Ig_{\Katz, \overline{\bbF_p}}$, the canonical extension 
\[ \calE_{E_\univ, \wh{\varphi}_\univ, \varphi_{\bbA_f^{(p)}}}: 1 \rightarrow \mu_{p^\infty} \rightarrow E[p^\infty] \rightarrow \bbQ_p/\bbZ_p \rightarrow 1 \] 
is a Kummer $p$-divisible group (as defined in \ref{subsection:kummer-p-divisible-groups}).

This is not the case: if it were, then $\calE_{E_\univ, \wh{\varphi}_\univ, \varphi_{\bbA_f^{(p)}}}$ would be of the form $\calE_{q_\univ}$ for some $q_\univ \in \bbV_{\Katz, \overline{\bbF_p}}^\times$, and thus would only depend on $\wh{\varphi}|_{\wh{E}[p^n]}$ for some sufficiently large $n$ -- indeed, $\bbV_{\Katz, \overline{\bbF_p}}$ is the colimit of the rings representing the moduli problem where the level structure at $p$ is an isomorphism of $\wh{E}[p^n]$ with $\mu_{p^n}$. In particular, $q_\univ$ and thus the isomorphism class of $\calE_{E_\univ, \wh{\varphi}_\univ, \varphi_{\bbA_f^{(p)}}}$ would be preserved under the action of $1+p^n \in \bbZ_p^\times$. 

This gives a contradiction: Fix a point in $x \in \Ig_{\Katz}(\overline{\bbF_p})$ as in the previous section, and consider its orbit $O=\bbZ_p^\times \cdot x$ (which is isomorphic to the profinite set $\bbZ_p^\times$ as a scheme over $\overline{\bbF_p}$ since the action is free on $\Ig_{\Katz}(\overline{\bbF_p})$). Without using $q_\univ$, we find that the Serre-Tate coordinate \emph{does} exist on a formal neighborhood $O^\vee$ of $O$ in $\Ig_{\Katz, \overline{\bbF_p}}$ as a map 
\begin{equation}\label{eqn:extended-serre-tate}
 O^\vee \rightarrow \wh{\bbG_m}, \end{equation}
sending a point in an Artin local $\overline{\bbF_p}$-algebra to the Serre-Tate coordinate of the extension $\calE_{E_\univ, \wh{\varphi}_\univ, \varphi_{\bbA_f^{(p)}}}$ at that point. A short computation from the definition of this extension shows the map (\ref{eqn:extended-serre-tate}) is $\bbZ_p^\times$-equivariant, where the action of $z \in \bbZ_p^\times$ on the right is by the automorphism $z^2$ of $\wh{\bbG_m}$. In particular, $1+p^n$ acts non-trivially on the right-hand side.  On the other hand, since our assumption that the extension was Kummer implies (as argued above) that the element $1+p^n$ preserves the isomorphism class of $\calE_{E_\univ, \wh{\varphi}_\univ, \varphi_{\bbA_f^{(p)}}}$, the action of $1+p^n$ must also preserve the map (\ref{eqn:extended-serre-tate}), a contradiction. 

\begin{remark}
One can argue similarly using the Tate curve instead of a Serre-Tate ordinary expansion. 
\end{remark}

\subsection{Action on the Tate curve and $q$-expansions} \label{ss:cusps}
Let 
\[ R = \left( \colim_{(N,p)=1}  W(\overline{\bbF_p})((q^{1/N})) \right)^\wedge \]
and consider the Tate curve $\Tate(q)$  over $R$. We have the canonical trivialization 
\[ \varphi_{\can}: \wh{\Tate(q)} \xrightarrow{\sim} \wh{\bbG_m}, \]
and, if we fix a compatible system of prime-to-$p$ roots of unity $\zeta_N$, we obtain a basis $(\zeta_N)_N, (q^{1/N})_N$ for $T_{\wh{\bbZ}^{(p)}} \Tate(q)$ over $R$ and thus a trivialization $\varphi_{\bbA_f^{(p)}}$ of the prime-to-$p$ adelic Tate module.

The \emph{cusps} of $\Ig_\Katz$ are the $R$-points in the $\GL_2(\wh{\bbZ}^{(p)})$-orbit of 
\[ \l( \Tate(q), \wh{\varphi}_\can, \varphi_{\bbA_f^{(p)}} \r). \]
For $g \in \bbV_{\Katz,A}$ and a cusp $c$, we call the element $c(g) \in A \tensorhat R$ the $q$-expansion of $g$ at $c$. We find

\begin{corollary}\label{corollary:action-on-Tate}
If $c$ is a cusp of $\Ig_{\Katz}$ and $g \in \bbV_{\Katz, A}$ has $q$-expansion at $c$ 
\[ \sum_{k \in \bbZ_{(p)}} a_{k} q^k ,\]
then, for $\zeta \in \wh{\bbG_m}(A)$, 
\[ \zeta \cdot g := (\zeta^{-1} \cdot)^* g \]
has $q$-expansion at $c$
\begin{equation}\label{equation:q-expansion-action} \sum_{k \in \bbZ_{(p)}} a_{k} (\zeta q)^k = \sum_{k \in \bbZ_{(p)}} \zeta^k a_{k}  q^k\end{equation}
(where the powers $\zeta^k$ make sense because $k \in \bbZ_{(p)} \subset \bbZ_p$). 
\end{corollary}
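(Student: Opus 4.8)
The plan is to reduce the statement about $q$-expansions to the already-proven Theorem~\ref{theorem:computation-of-action} by identifying the $p$-divisible group of the Tate curve with a Kummer $p$-divisible group. First I would recall from Example~\ref{example:kummer-groups}-(1) that $\Tate(q)[p^\infty] = G_q[p^\infty]$, and check that under the canonical trivialization $\varphi_\can$ and the induced \'{e}tale trivialization, the extension structure $\calE_{\Tate(q)[p^\infty], \wh{\varphi}_\can, \varphi^\et_\can}$ is precisely the Kummer extension $\calE_q$ (matching the conventions of Remark~\ref{remark:pairing} for the Weil pairing, so that the polarization compatibility holds). Thus the cusp $c$, viewed as a point of $\Ig_\Katz(R)$, is represented in the sense of \ref{ss:another-moduli-katz} by a quadruple $(E_0, \calE_q, \psi, \varphi_{\bbA_f^{(p)}})$ where $E_0$ is the reduction of $\Tate(q)$ mod $q$ (the nodal cubic, or rather its smooth part's $p$-divisible group) and $\psi$ is the canonical identification; a general cusp in the $\GL_2(\wh{\bbZ}^{(p)})$-orbit differs only in $\varphi_{\bbA_f^{(p)}}$, which does not affect the argument since the $\wh{\bbG_m}$-action commutes with the $\GL_2(\AA_f^{(p)})$-action in the relevant way.

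Next I would apply Theorem~\ref{theorem:computation-of-action}: for $\zeta \in \wh{\bbG_m}(A)$ and working over $A \tensorhat R$ (after enlarging by a nilpotent-free thickening is not needed here — rather one uses that $\zeta \equiv 1$ modulo the ideal generated by $\zeta - 1$ in $A\tensorhat R$, which must be checked to be an admissible ideal in the relevant sense; more carefully one works modulo $p^n$ and over the subring where things are genuinely nilpotent, but since the final identity is an identity of $q$-expansions it suffices to verify it after such reductions and then pass to the limit). The theorem tells us that $\zeta \cdot c$ is represented by $(E_0, \calE_{\zeta^{-1}q}, \psi', \varphi_{\bbA_f^{(p)}})$. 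Now the key point is that $\calE_{\zeta^{-1}q}$ is again the Kummer extension attached to the Tate curve, but now with parameter $\zeta^{-1}q$ in place of $q$ — in other words, $\zeta \cdot c$ is obtained from $c$ by the substitution $q \mapsto \zeta^{-1} q$ on the base ring $R$, compatibly with all the trivializations. Concretely, there is an automorphism of (a suitable completion of) $R$ sending $q \mapsto \zeta^{-1} q$, and $\zeta \cdot c = c \circ (q \mapsto \zeta^{-1}q)$ as $A\tensorhat R$-points.

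Finally, to extract the stated formula I would unwind the definition $\zeta \cdot g := (\zeta^{-1}\cdot)^* g$. Evaluating at the cusp $c$ gives $c((\zeta^{-1}\cdot)^* g) = (\zeta \cdot c)(g)$, which by the previous paragraph equals $c(g)$ with $q$ replaced by $\zeta q$ — note the inversion: the action $(\zeta^{-1}\cdot)^*$ on functions corresponds to $(\zeta^{-1})^{-1} = \zeta$ acting on points, i.e. to the substitution $q \mapsto \zeta q$. Since $c(g) = \sum_{k} a_k q^k$, we get $\sum_k a_k (\zeta q)^k = \sum_k \zeta^k a_k q^k$, which is (\ref{equation:q-expansion-action}). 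The powers $\zeta^k$ for $k \in \bbZ_{(p)} \subset \bbZ_p$ make sense because $\zeta \in \wh{\bbG_m}(A) = 1 + (\text{topologically nilpotent ideal})$, so $\zeta^s$ is defined for any $s \in \bbZ_p$ via the binomial expansion.

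The main obstacle I anticipate is the bookkeeping with the two levels of inversions and sign conventions — between the definition of the $\wh{\bbG_m}$-action on functions versus points, the relation $\calE_q^\vee \cong \calE_{q^{-1}}$ from Remark~\ref{remark:pairing}, and the fact (Remark~\ref{remark:alternate-construction}) that the Serre-Tate coordinate of $\calE_q$ is $q^{-1}$ — and making sure these are tracked consistently so that the net effect is multiplication by $\zeta q$ rather than $\zeta^{-1} q$. A secondary technical point is justifying that Theorem~\ref{theorem:computation-of-action} applies over the non-nilpotent base $A\tensorhat R$; this is handled by reducing mod $p^n$, noting the $\wh{\bbG_m}$-action and the Kummer construction are compatible with these reductions and with the passage to the ($q$-adically, then $p$-adically) completed colimit, and that an identity of $q$-expansions can be checked after all such reductions.
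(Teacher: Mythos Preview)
Your approach is essentially the paper's: identify $\Tate(q)[p^\infty]$ with the Kummer extension $\calE_q$, reduce to the standard cusp via commutativity with $\GL_2(\wh{\bbZ}^{(p)})$ (note: not all of $\GL_2(\bbA_f^{(p)})$ --- the paper uses that $\GL_2(\wh{\bbZ}^{(p)}) \subset \ker \det_\ur$, which is what makes it actually commute with the $\wh{\bbG_m}$-action), apply Theorem~\ref{theorem:computation-of-action}, and read off the substitution. However, two points in your execution are off.

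First, the inversion bookkeeping is wrong, exactly as you feared. Pullback satisfies $c\l((\zeta^{-1}\cdot)^* g\r) = g(\zeta^{-1}\cdot c)$, not $g(\zeta\cdot c)$. Your ``previous paragraph'' computed $\zeta\cdot c$, which corresponds to $q\mapsto \zeta^{-1}q$; plugging that into your displayed chain would give $\sum a_k(\zeta^{-1}q)^k$, the wrong answer. The correct chain is: $c(\zeta\cdot g) = g(\zeta^{-1}\cdot c)$, and by Theorem~\ref{theorem:computation-of-action} applied with $\zeta^{-1}$ in place of $\zeta$, the point $\zeta^{-1}\cdot c$ carries $\calE_{\zeta q}$, i.e.\ is the base change of $c$ through $q\mapsto \zeta q$. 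You reached the right formula only because two sign errors cancelled.

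Second, in the Serre--Tate reformulation of \S\ref{ss:another-moduli-katz} one reduces modulo a $p$-adically topologically nilpotent $\pi$, not modulo $q$. The Tate curve reduced mod $q$ is degenerate and does not live in the moduli problem; rather, one works mod $p^n$ (where $\zeta-1$ becomes genuinely nilpotent, since $\zeta\in\wh{\bbG_m}(A/p^n)$ means $\zeta-1$ lies in the nilradical) and $E_0$ is the Tate curve over $\overline{\bbF_p}((q^{1/N}))$, an honest ordinary elliptic curve. Your parenthetical about the nodal cubic should be deleted.
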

\begin{proof}
The $q$-expansion in \ref{equation:q-expansion-action} is the image of $c(g)$ under the the map 
\begin{align*} \gamma_{\zeta}: R \tensorhat{A} & \rightarrow R\tensorhat{A} \\
 q^k & \mapsto (\zeta q)^k. \end{align*} 
Thus, because the action of $\wh{\bbG_m}$ commutes with the action of $\GL_2(\wh{\bbZ}^{(p)})$ (because the latter is in the kernel of $\det_\ur$), we may assume our cusp is given by the triple 
\[ \l( \Tate(q), \wh{\varphi}_\can, \varphi_{(\zeta_N)_N, (q^{1/N})_N} \r). \]
Then, it follows from Theorem \ref{theorem:computation-of-action} that 
\[ \zeta^{-1} \cdot \l( \Tate(q), \wh{\varphi}_\can, \varphi_{(\zeta_N, q^{1/N})} \r) = \l( \Tate(\zeta q), \wh{\varphi}_\can, \varphi_{(\zeta_N, \zeta^{1/N}q^{1/n})} \r). \]
This is the base change of 
\[ \l( \Tate(q), \wh{\varphi}_\can, \varphi_{(\zeta_N)_N, (q^{1/N})_N} \r) \]
through $\gamma_\zeta$, and thus we conclude. 
\end{proof}

\begin{remark}
Using Corollary \ref{corollary:action-on-Tate} over $A=\bbZ_p[\epsilon]$, we find that if we differentiate the $\wh{\bbG_m}$-action along $t \partial_t$ in the sense of \ref{subsection:differentiation}, the induced operator on $q$-expansions is $-q\partial_q = - \theta$ (we get a minus sign because to get the derivation in \ref{subsection:differentiation} we did not compose with an inverse as we have to obtain the natural left action on functions). 
\end{remark}

\section{Fourier transform and the algebra action}\label{sec:algebra-action}

We now explain how the $\wh{\bbG_m}$-action induces an action of $\Cont(\bbZ_p, \bbZ_p)$ on $\bbV_{\Katz}$ via $p$-adic Fourier theory. We then explain the relation between our approach and a classical construction of the algebra action (which is in fact equivalent to the $\wh{\bbG_m}$-action) due to Gouvea \cite{gouvea:arithmetic-of-p-adic-modular-forms} in terms of the exotic isomorphisms of Katz \cite{katz:p-adic-interpolation-of-real-analytic-eisenstein-series}. 

\subsection{The action of $\Cont(\bbZ_p, R)$}\label{ss:algebra-action}
For any $p$-adically complete ring $R$, the action map for the $\wh{\bbG_m}$-action is described by a continuous map
\[ a^*: \VV_{\Katz, R} \rightarrow R[[T]] \tensorhat_{R} \VV_{\Katz, R}. \]
The left action of $\wh{\bbG_m}(R)$ on $\bbV_{\Katz,R}$ is by $\zeta \cdot g = (\zeta^{-1} \cdot)^*g$, and we can express this via the action map: if we view $\zeta \in \wh{\bbG_m}(R)$ as the map $R[[T]] \rightarrow R$ given by $T \mapsto \zeta$, and write $\iota$ for the inverse map $\wh{\bbG_m} \rightarrow \wh{\bbG_m}$, then $\zeta \cdot g$ is the image of 
\[ (\iota\times \Id)^* a^* g \]
under the induced map 
\[ \zeta_{\bbV_{\Katz, R}}: R[[T]] \tensorhat_R \bbV_{\Katz, R} \rightarrow \bbV_{\Katz, R}. \]

More generally, if we identify $R[[T]]$ with the continuous $R$-linear dual of $\Cont(\ZZ_p, R)$ via the Amice transform, we obtain an $R$-linear map 
\begin{align*} \Cont(\ZZ_p, R) \times \bbV_{\Katz, R} & \rightarrow \bbV_{\Katz, R} \\
(f, g) &\mapsto f \cdot g := \langle f, (\iota\times \Id)^* a^* g \rangle. 
\end{align*}
If we let $\chi_\zeta$ be the $R$-valued character of $\bbZ_p$ given by $\chi_\zeta(a) = \zeta^a$, viewed as an element of $\Cont(\bbZ_p, R)$, we find
\[ \chi_\zeta \cdot g = \zeta \cdot g. \]
That we have an action of $\widehat{\GG_m}$ to begin with is equivalent to this being an algebra action of $\Cont(\ZZ_p, R)$ on $\bbV_{\Katz, R}.$  

More generally, we find (using the notation for cusps as in \ref{ss:cusps} above):

\begin{theorem}\label{theorem:algebra-action-q-expansions}
If $g \in \bbV_{\Katz, R}$ has $q$-expansion at a cusp $c$
\[ c(g)=\sum_{k \in \bbZ_{(p)}} a_{k} q^k, \]
then the $q$-expansion of $f \cdot g$ is
\[ c(f \cdot g) = \sum_{k \in \bbZ_{(p)}} f(k) a_{k} q^k. \]
\end{theorem}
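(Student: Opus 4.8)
The plan is to reduce everything to Corollary~\ref{corollary:action-on-Tate}, together with the compatibility $\chi_\zeta\cdot(-)=\zeta\cdot(-)$ established in \ref{ss:algebra-action} (which is the defining property of the Amice transform: pairing a measure in $R[[T]]$ against the character $\chi_\zeta$ equals evaluation of that measure, viewed as a function on $\wh{\GG_m}$, at the point $\zeta$).

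First I would fix $g\in\bbV_{\Katz,R}$ with $q$-expansion $c(g)=\sum_{k\in\bbZ_{(p)}}a_k q^k$ at the cusp $c$, and apply the operation ``$q$-expansion at $c$'' to the $\bbV_{\Katz,R}$-factor of
\[ (\iota\times\Id)^* a^* g \;\in\; R[[T]]\tensorhat_R \bbV_{\Katz,R}. \]
This produces an element that may be written $\sum_{k\in\bbZ_{(p)}}\mu_k\,q^k$ with each $\mu_k\in R[[T]]$, i.e.\ a measure on $\bbZ_p$. Since taking $q$-expansions is $R$-linear and continuous while the pairing against $f$ only touches the $R[[T]]$-slot, this gives $c(f\cdot g)=\sum_k\langle f,\mu_k\rangle\,q^k$. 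Hence the theorem is equivalent to the identity of measures $\mu_k=a_k\,\delta_k$, where $\delta_k$ denotes the Dirac mass at $k\in\bbZ_p$, characterized by $\langle h,\delta_k\rangle=h(k)$ for $h\in\Cont(\bbZ_p,R)$; equivalently, under the Amice identification of $R[[T]]$ with functions on $\wh{\GG_m}$, $\delta_k$ is the function $\zeta\mapsto\zeta^k$ (which makes sense because points of $\wh{\GG_m}$ lie in $1+(\text{topologically nilpotent})$).

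Next I would identify $\mu_k$ by evaluating at points. For a $p$-adically complete $R$-algebra $R'$ and $\zeta\in\wh{\GG_m}(R')$, specializing $R[[T]]$ at $\zeta$ sends $\sum_k\mu_k q^k$ to $\sum_k\mu_k(\zeta)\,q^k$; by the compatibility recalled above, $\mu_k(\zeta)=\langle\chi_\zeta,\mu_k\rangle$, and since $\chi_\zeta\cdot g=\zeta\cdot g$ this specialization is exactly $c(\zeta\cdot g)$. On the other hand, Corollary~\ref{corollary:action-on-Tate} gives $c(\zeta\cdot g)=\sum_k\zeta^k a_k q^k$. Comparing coefficients of $q^k$ yields $\mu_k(\zeta)=\zeta^k a_k$ for every such $\zeta$; taking $R'=R[[T]]$ (with the $(p,T)$-adic topology) and $\zeta$ the tautological point then identifies $\mu_k$, as a function on $\wh{\GG_m}$, with $a_k$ times $\zeta\mapsto\zeta^k$, i.e.\ $\mu_k=a_k\delta_k$. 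Consequently $c(f\cdot g)=\sum_k\langle f,a_k\delta_k\rangle q^k=\sum_k f(k)\,a_k\,q^k$, as claimed.

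I do not expect any single step to be a serious obstacle once Corollary~\ref{corollary:action-on-Tate} is in hand; the only points requiring care are formal. One is checking that ``$q$-expansion at $c$'' commutes with specialization of $R[[T]]$ and with the pairing against $f$ — this uses that $c$ arises from evaluation on the Tate curve and hence is a continuous map of $p$-adically complete $R$-algebras compatible with base change, and that all tensor products above are completed. The other is pinning down the normalization of the Amice transform so that $\langle\chi_\zeta,-\rangle$ really is evaluation at $\zeta$ and $\delta_k$ corresponds to $\zeta\mapsto\zeta^k$, which is what makes the comparison with Corollary~\ref{corollary:action-on-Tate} match on the nose. (One could instead avoid Corollary~\ref{corollary:action-on-Tate} and redo the Tate-curve computation of Theorem~\ref{theorem:computation-of-action} directly for the universal character, but routing through the corollary is cleaner.)
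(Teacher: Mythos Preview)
Your argument is correct and reaches the same conclusion from the same key input (Corollary~\ref{corollary:action-on-Tate}), but the route is genuinely different from the paper's. The paper reduces to $R=\bbZ_p$, base-changes to $\calO_{\bbC_p}$, and then argues by density: locally constant functions are dense in $\Cont(\bbZ_p,\bbZ_p)$, over $\calO_{\bbC_p}$ a scalar multiple of each locally constant function is a finite sum of characters $\chi_\zeta$, and for characters the claim is Corollary~\ref{corollary:action-on-Tate}; injectivity of the total $q$-expansion map together with torsion-freeness then removes the scalar. Your approach instead identifies the universal element $(\iota\times\Id)^*a^*g$ directly: after applying $c$ in the second factor you get coefficients $\mu_k\in R[[T]]$, and you recognize $\mu_k=a_k(1+T)^k$ by a Yoneda argument (the two functions on $\wh{\bbG_m}$ agree on all points by Corollary~\ref{corollary:action-on-Tate}, hence are equal).

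What each approach buys: the paper's density argument is elementary and avoids any bookkeeping with completed tensor products or topologies on $R[[T]]$, at the cost of the slightly ad hoc passage to $\calO_{\bbC_p}$ and the ``up to a scalar multiple'' maneuver. Your functor-of-points argument is cleaner and works uniformly over any base $R$, but you must be careful (as you note) that the Fubini-type commutation of $c$ with $\langle f,-\rangle$ holds on the completed tensor product, and that when you invoke the tautological point you are implicitly using the $(p,T)$-adic topology on $R[[T]]$ so that $1+T\in\wh{\bbG_m}(R[[T]])$; Corollary~\ref{corollary:action-on-Tate} (via Theorem~\ref{theorem:computation-of-action} with $\pi=T$) does apply in that setting. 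One small correction: the $\mu_k$ really live in $R[[T]]\tensorhat W(\overline{\bbF_p})$ rather than $R[[T]]$, since the $a_k$ themselves lie in $R\tensorhat W(\overline{\bbF_p})$, but this changes nothing in the argument.
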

\begin{proof}
Because for a general $R$, 
\[ \bbV_{\Katz, R} = \bbV_{\Katz, \bbZ_p} \tensorhat R \textrm{ and } \Cont(\bbZ_p, R) = \Cont(\bbZ_p, \bbZ_p) \tensorhat R ,\]
it suffices to verify this for $R=\bbZ_p$. Then, because the base change is injective on the target ring for $q$-expansions, it suffices work over $R=\calO_{\bbC_p}$. Moreover it suffices to verify the identity for the action of locally constant functions on $\bbZ_p$, which are dense in continuous functions. But for any locally constant function, some $\calO_{\bbC_p}$-multiple can be written as a linear combination of characters, and thus Corollary \ref{corollary:action-on-Tate} gives the result for a multiple of each locally constant function. Since the total $q$-expansion map (i.e. the product of the $q$-expansion maps over all cusps) is injective and the target ring is torsion-free over $\calO_{\bbC_p}$, we find that the result holds for all locally constant functions.
\end{proof}

\subsection{The Frobenius and Gouvea's construction}\label{ss:gouvea}

\newcommand{\diag}{\mathrm{diag}}
As noted in Remark \ref{remark:frobenius}, the diagonal quasi-isogeny $\diag(p^n,1) \subset M_p \subset B_p$ acting on the moduli problem $\Ig^1_\CS$ induces a map $\Ig_\Katz \rightarrow \Ig_\Katz$ because it conjugates $U_p^\circ $ to $p^n  U_p^\circ$. In particular this map  factors as 
\[ \Ig_{\Katz} = U_p^\circ\bs \Ig_{\CS} \xrightarrow{\sim} p^n U_p^\circ \bs \Ig_{\CS}  \rightarrow U_p^\circ \bs \Ig_\CS  = \Ig_\Katz \]
where the first map is the isomorphism induced by $\diag(p^n, 1)$ and the second map is the projection. 

In fact, this map is the canonical Frobenius lift (sending $E$ to $E/\mu_{p^n}$ with the induced level structures), and the isomorphism that factors it is one of Katz's exotic isomorphisms (cf. \cite[Lemma 5.6.3]{katz:p-adic-interpolation-of-real-analytic-eisenstein-series}). The action of $\mu_{p^n} \subset \wh{\bbG_m}$ on $\Ig_{\Katz}$ is identified through the exotic isomorphism with the action of $\mu_{p^n} = U_p^\circ / p^n U_p^\circ$ on $p^n U_p^\circ \bs \Ig_{\CS}$. The latter space has a moduli interpretation relative to $\Ig_\Katz$ as parameterizing splittings of the canonical extension 
\[ 1 \rightarrow \mu_{p^n} \rightarrow E[p^n] \rightarrow 1/p^n\bbZ_p / \bbZ_p \rightarrow 1, \]
and the action of $\mu_{p^n}$ is just by changing the splitting. In particular, one can compute the action on $q$-expansions using this relative moduli interpretation to recover the action of all finite order characters, which, combined with the $q$-expansion principle, is enough to produce the full algebra action of Theorem \ref{theorem:algebra-action-q-expansions}. This is essentially what is done by Gouvea \cite[III.6.2]{gouvea:arithmetic-of-p-adic-modular-forms}, who first constructed this action (which he thought of as a twisting measure, much in the spirit of our application in the next section).

\begin{remark}
The moduli problem $\Ig_{\CS}^1$ is the inverse limit of the moduli problems parameterizing splittings at level $p^n$ over $\Ig_\Katz$, and thus is the inverse limit along the canonical Frobenius lift of $\Ig_\Katz$. As observed by Caraiani-Scholze \cite{caraiani-scholze:generic}, this implies that 
$\bbV_{\CS}^1$ is isomorphic to the Witt vectors of the perfection of $\bbV_{\Katz, \bbF_p}.$
\end{remark}

\section{Eisenstein measures}\label{section:eisenstein-measures}
In a series of papers (\cite{katz:l-via-moduli, katz:eisenstein-measure-and-p-adic-interpolation, katz:p-adic-interpolation-of-real-analytic-eisenstein-series}), Katz introduced increasingly general \emph{Eisenstein measures} with values in $\VV_\Katz$ interpolating Eisenstein series. These Eisenstein measures specialize at the cusps and ordinary CM points to $p$-adic L-functions interpolating $L$-values attached to characters of the id\`{e}le class group of $\bbQ$ or an imaginary quadratic extension.  

The papers \cite{katz:l-via-moduli,katz:eisenstein-measure-and-p-adic-interpolation} are concerned with single variable $p$-adic $L$-functions, whereas \cite{katz:p-adic-interpolation-of-real-analytic-eisenstein-series} gives two variable $L$-functions by interpolating not just holomorphic Eisenstein series but also certain real analytic Eisenstein series. 

In this section, we explain how ``half" of the two-variable measure can be produced by a type of convolution of the single-variable measure with the $\widehat{\GG_m}$-action. To keep the exposition clear, we work at level $K^p=\GL_2 (\widehat{\ZZ}^{(p)})$ away from $p$. 

\begin{remark}
The real analytic Eisenstein series are related to the holomorphic Eisenstein series by iterated application of the differential operator $\theta$ -- thus, we can summarize the difference between our approach and that of Katz by saying that instead of applying $\theta$ and then interpolating, we have first interpolated $\theta$ and then applied this interpolated operator to the holomorphic Eisenstein measure. 
\end{remark}

\subsection{Measures}
For $R$ a $p$-adically complete $\ZZ_p$-algebra and $X$ a profinite set, an $R$-valued measure on $X$ is an element 
\[ \mu \in \Hom_{\ZZ_p}(\Cont(X, \ZZ_p), R). \]
Note that such a $\mu$ is automatically continuous for the $p$-adic topology on $\Cont(X, \ZZ_p)$ and $R$. In fact, the stronger basic congruence property holds: if $f \equiv g \mod p^n$, then $\mu(f) \equiv \mu(g) \mod p^n$ -- this observation is at the heart of the application of measures to $p$-adic $L$ functions. 

\begin{remark}\label{remark:distribution-equals-measure}
An $R$-valued distribution is an $R$-valued functional on the space of locally constant functions on $X$, $C^\infty(X,\ZZ_p)$. The space $C^\infty(X,\ZZ_p)$ is dense in $\Cont(X, \ZZ_p)$, thus when $R$ is $p$-adically complete a distribution automatically completes to a measure, and the two notions are equivalent. We use this below. 
\end{remark}

\subsubsection{Measures on products}

\begin{proposition}\label{prop:bilinear-form-to-measure}
Let $X$ and $Y$ be profinite sets, and $R$ a $p$-adically complete $\ZZ_p$-algebra. If $(\,,\,)$ is an $R$-valued $\ZZ_p$-bilinear pairing on $\Cont(X, \ZZ_p) \times \Cont(Y, \ZZ_p)$, then there is a unique $R$-valued measure $\mu$ on $X \times Y$ such that for $f \in \Cont(X, \ZZ_p)$ and $g \in \Cont(Y, \ZZ_p)$, 
\begin{equation} \label{eqn:product-property-measure} \mu(fg)=(f, g). \end{equation}
\end{proposition}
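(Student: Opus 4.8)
The plan is to construct $\mu$ first as an $R$-valued distribution on the locally constant functions $C^\infty(X\times Y,\ZZ_p)$ and then invoke Remark \ref{remark:distribution-equals-measure} to complete it to a measure, using that $R$ is $p$-adically complete. The essential input is the identification
\[ C^\infty(X,\ZZ_p)\otimes_{\ZZ_p} C^\infty(Y,\ZZ_p)\xrightarrow{\sim} C^\infty(X\times Y,\ZZ_p). \]
I would prove this by writing $X=\lim_i X_i$ and $Y=\lim_j Y_j$ as inverse limits of finite sets, so that $C^\infty(X,\ZZ_p)=\colim_i \ZZ_p^{X_i}$ and $C^\infty(Y,\ZZ_p)=\colim_j \ZZ_p^{Y_j}$; since any locally constant function on $X\times Y$ factors through some finite quotient $X_i\times Y_j$ (by compactness, a clopen set is a finite union of rectangles), these products $X_i\times Y_j$ are cofinal and $C^\infty(X\times Y,\ZZ_p)=\colim_{i,j}\ZZ_p^{X_i\times Y_j}=\colim_{i,j}\big(\ZZ_p^{X_i}\otimes_{\ZZ_p}\ZZ_p^{Y_j}\big)$, which equals the tensor product of the colimits because $\otimes$ commutes with colimits. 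In concrete terms: locally constant functions on the product are exactly finite sums $\sum_k f_kg_k$ of pullbacks of locally constant functions on $X$ and on $Y$.

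Granting this, I would define $\mu\big(\sum_k f_kg_k\big):=\sum_k (f_k,g_k)$. Well-definedness is precisely the assertion that $(\,,\,)$, restricted to $C^\infty(X,\ZZ_p)\times C^\infty(Y,\ZZ_p)$, descends along the canonical surjection onto $C^\infty(X,\ZZ_p)\otimes_{\ZZ_p}C^\infty(Y,\ZZ_p)$ — which is automatic from $\ZZ_p$-bilinearity — and then transported through the isomorphism above; this is where the identification of the preceding paragraph does all the work, and it is really the only genuine subtlety in the argument. This produces an $R$-valued distribution on $X\times Y$, hence by Remark \ref{remark:distribution-equals-measure} a unique $R$-valued measure $\mu$ with $\mu(fg)=(f,g)$ for \emph{locally constant} $f,g$, and uniqueness of such a $\mu$ is clear since (\ref{eqn:product-property-measure}) on locally constant $f,g$ already pins it down on the dense subspace $C^\infty(X\times Y,\ZZ_p)$.

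It then remains to upgrade (\ref{eqn:product-property-measure}) to arbitrary continuous $f\in\Cont(X,\ZZ_p)$, $g\in\Cont(Y,\ZZ_p)$, and here the key observation is that \emph{any} $\ZZ_p$-linear functional on $\Cont(Z,\ZZ_p)$ valued in the $p$-adically separated ring $R$ is automatically continuous: if $\|h_n\|\to 0$ then $h_n\in p^k\Cont(Z,\ZZ_p)$ for $n$ large, so its image lies in $p^kR$ for $n$ large. I would then run a two-step density argument. For fixed locally constant $f$, the functionals $g\mapsto\mu(fg)$ and $g\mapsto(f,g)$ on $\Cont(Y,\ZZ_p)$ are both $\ZZ_p$-linear, hence continuous, and agree on the dense subspace $C^\infty(Y,\ZZ_p)$, hence everywhere; this gives (\ref{eqn:product-property-measure}) for $f$ locally constant and $g$ arbitrary. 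Fixing now an arbitrary $g$ and repeating the same argument in the $X$-variable — the functionals $f\mapsto\mu(fg)$ and $f\mapsto(f,g)$ agree on the dense subspace $C^\infty(X,\ZZ_p)$ by the previous step — yields the general case, completing the proof.
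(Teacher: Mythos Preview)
Your proposal is correct and follows essentially the same approach as the paper: both prove the tensor product isomorphism $C^\infty(X,\ZZ_p)\otimes_{\ZZ_p}C^\infty(Y,\ZZ_p)\cong C^\infty(X\times Y,\ZZ_p)$ (the paper via cofinal coverings by disjoint compact opens, you via cofinal finite quotients---these are the same), construct $\mu$ as a distribution, and then extend~(\ref{eqn:product-property-measure}) to continuous $f,g$. The only cosmetic difference is in this last step: the paper approximates $f,g$ simultaneously by locally constant $f_n,g_n$ congruent $\bmod\ p^n$ and argues directly that $\mu(fg)\equiv\mu(f_ng_n)=(f_n,g_n)\equiv(f,g)\bmod p^n$, whereas you run a two-step density argument one variable at a time; both rest on the same automatic continuity of $\ZZ_p$-linear maps into $p$-adically separated targets.
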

\begin{proof}
By Remark \ref{remark:distribution-equals-measure}, it suffices to construct a functional on $C^\infty(X \times Y, \ZZ_p)$ satisfying (\ref{eqn:product-property-measure}), and then verify that (\ref{eqn:product-property-measure}) holds for any continuous $f$ and $g$ and the unique extension of that distribution to a measure. So, suppose we have constructed a measure $\mu$ such that (\ref{eqn:product-property-measure}) holds for $f$ and $g$ locally constant. Then, for any continuous $f$ and $g$ and $n \in \ZZ_{>0}$, pick $f_n$ and $g_n$ locally constant such that $f \equiv f_n \mod p^n$ and $g \equiv g_n \mod p^n$. Then 
\[ fg \equiv f_n g_n \mod p^n \]
and thus
\[ \mu (fg) \equiv \mu(f_n g_n)  \equiv (f_n,g_n) \equiv (f,g) \mod p^n \]  
and we conclude $\mu(fg) = (f,g)$. 

Thus it remains to construct the distribution and show that it is unique. The bilinear pairing $(\;,\;)$ induces a functional on $C^\infty(X, \ZZ_p) \otimes_{\ZZ_p} C^\infty(Y,\ZZ_p)$, thus to conclude, it suffices to show that the product map  
\[ C^\infty(X, \ZZ_p) \otimes_{\ZZ_p} C^\infty(Y,\ZZ_p) \rightarrow C^\infty(X \times Y, \ZZ_p) \]
is an isomorphism: For any profinite set $W$, $C^\infty(W, \ZZ_p)$ is the colimit over finite coverings $\calU = \{U_1,...,U_n\}$ of $W$ by disjoint compact opens of $C^\infty_U(W, \ZZ_p)$, the space of functions constant on each of the $U_i$. In particular, if $\calU=\{U_1,...,U_n\}$ is such a cover of $X$ and $\calV=\{V_1,...,V_m\}$ is such a cover of $Y$ then 
\[ \calU \times \calV := \{ U_i \times V_j\} \]
is such a cover of $X \times Y$, and the covers of this form are cofinal for covers of $X \times Y$ by disjoint compact opens. Considering the basis of characteristic functions, we find that the product map induces an isomorphism 
\[ C^\infty_{\calU}(X, \ZZ_p) \otimes_{\ZZ_p} C^\infty_{\calV}(Y, \ZZ_p) \rightarrow C^\infty_{\calU \times \calV}(X \times Y, \ZZ_p) \]
and passing to the colimit over covers $\calU$ and $\calV$, we conclude. 
\end{proof}

\begin{example}
If $\nu_X$ and $\nu_Y$ are $R$-valued measures on $X$ and $Y$, respectively, then 
\[ (f, g) \mapsto \nu_X(f) \nu_Y(g) \]
is a bilinear form and the resulting measure $\mu$ on $X \times Y$ is the product measure. 
\end{example}

\subsection{Katz's Eisenstein measures}
In this section, we write $\bbV = \bbV_{\Katz}^{\GL_2(\wh{\bbZ}^{(p)})}$ for the ring representing the Katz moduli problem with no prime-to-$p$ level structure. 
 
\subsubsection{Single variable measures} 
In \cite[XII]{katz:l-via-moduli}, Katz introduced the single variable Eisenstein measures
\begin{align*}
 \mu^{(a)}: \Cont(\ZZ_p, \ZZ_p) \rightarrow & \VV 
\end{align*}
characterized by the moments 
\[ \mu^{(a)}(z^{k-1}) = (1-a^k) 2 G_k \] 
where $2 G_k$ for $k \geq 2$ even is the Eisenstein series with $q$-expansion
\[ \zeta(1-k) +  2\sum_{n=1}^\infty q^n \cdot  \sum_{d|n} d^{k-1} \]
and $2G_k=0$ for $k$ odd.  

\begin{remark}\label{remark:interpolation-locally-constant}
For $k \geq 2$ even and $f$ an even locally constant function on $\ZZ_p$, $\mu^{(a)}$ satisfies the following additional interpolation property \cite[Corollary 3.3.8]{katz:eisenstein-measure-and-p-adic-interpolation}\footnote{In this reference, $\mu^{(a)} = H^{a,1}$ for $N=1$, except for a shift from $k$ to $k+1$.}: 
\[ \mu^{(a)}(z^{k-1} f) = (1-a^k) 2 G_{k,f} \]
where $2G_{k,f}$  has $q$-expansion
\[ L(1-k, f) + 2\sum_{n=1}^\infty q^n \cdot  \sum_{d|n} d^{k-1}f(d). \]
The series for general locally constant $f$ is also described in loc. cit.
\end{remark}

\subsubsection{Two variable measures}
In \cite{katz:p-adic-interpolation-of-real-analytic-eisenstein-series}, Katz introduced the two variable Eisenstein-Ramanujan measures

\begin{align*}
 \mu^{(a,1)}: \Cont(\ZZ_p \times \ZZ_p, \ZZ_p) \rightarrow & \VV 
\end{align*}
characterized by the moments
\[ \mu^{(a,1)}(x^k y^r) = (1-a^{k+r+1}) \Phi_{k,r} \]
where 
\[ \Phi_{k,r} = \begin{cases} 2G_{k+r+1} & \textrm{ if } k=0 \textrm{ or } r=0  \\
2 \sum_{n=1}^{\infty} q^n \sum_{dd'=n} d^{k}d'^{r} & \textrm{ if } k,r\neq 0. \end{cases} \]
Equivalently, 
\[ \Phi_{k,r} = \begin{cases} \theta^{r} 2G_{k+1 - r} & \textrm { if } k \geq r \\  
\theta^{k} 2G_{r+1 - k} & \textrm { if } r \geq k. \end{cases}\]

\begin{remark}
The symmetry between $k \geq r$ and $r \leq k$ becomes more complicated as soon as we consider locally constant functions as in Remark \ref{remark:interpolation-locally-constant}. 
\end{remark}

\subsubsection{Halving the measure}
Our technique will only recover ``half" of the measure, i.e. only the moments for $k \geq r$. To make this precise, consider the map 
\begin{align*}
\varphi: \ZZ_p \times \ZZ_p & \rightarrow \ZZ_p \times \ZZ_p \\
(x, y) & \mapsto (x, xy)  
\end{align*}
with image the subset of $(x,y)$ with $|y| \leq |x|$. The measure $\varphi_* \mu^{(a,1)}$ is characterized by the moments
\begin{equation}\label{eqn:halved-moments} \varphi_*\mu^{(a,1)} (x^s y^t) = \mu^{(a,1)}(x^{s+t}y^t) = (1-a^{s+2t+1})\theta^{t} 2G_{s+1} 
\end{equation}

\subsection{Convolution of the one-variable measure and the action map}

\begin{theorem} There is a $\VV$-valued measure $\nu$ on $\ZZ_p \times \ZZ_p$ with moments
\[ \nu (x^s y^t) = \psi( (1-a^{s+1})2 G_{s+1} ) (y^t) = (1-a^{s+1}) \theta^t 2G_{s+1}. \]
\end{theorem}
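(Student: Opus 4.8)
The plan is to realize $\nu$ as the measure on $\ZZ_p \times \ZZ_p$ attached, via Proposition~\ref{prop:bilinear-form-to-measure}, to a $\bbV$-valued bilinear pairing built by composing Katz's single-variable measure $\mu^{(a)}$ with the algebra action of $\Cont(\ZZ_p,\ZZ_p)$ on $\bbV$ coming from the $\wh{\bbG_m}$-action (Theorem~\ref{maintheorem:big-hecke} together with the reinterpretation of \ref{ss:algebra-action}). The point that makes this legitimate is that the $\wh{\bbG_m}$-action commutes with the action of $\GL_2(\wh{\bbZ}^{(p)})$, since this subgroup lies in $\ker \det_\ur$ (cf.\ Theorem~\ref{maintheorem:big-hecke}); hence the algebra action of $\Cont(\ZZ_p,\ZZ_p)$ preserves the subring $\bbV = \bbV_{\Katz}^{\GL_2(\wh{\bbZ}^{(p)})}$, which is a $p$-adically complete flat $\ZZ_p$-algebra, so Proposition~\ref{prop:bilinear-form-to-measure} is available with $R = \bbV$.

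Concretely, first I would introduce the pairing
\[ (\,,\,)\colon \Cont(\ZZ_p,\ZZ_p) \times \Cont(\ZZ_p,\ZZ_p) \longrightarrow \bbV, \qquad (f,h) \longmapsto h \cdot \mu^{(a)}(f), \]
where $h\cdot(-)$ denotes the algebra action of $h$ on $\bbV$; equivalently, $(f,h) = \psi\big(\mu^{(a)}(f)\big)(h)$, where $\psi(g)$ is the $\bbV$-valued measure $h \mapsto h\cdot g$ on $\ZZ_p$. This pairing is $\ZZ_p$-bilinear: it is $\ZZ_p$-linear in $f$ because $\mu^{(a)}$ is $\ZZ_p$-linear, and $\ZZ_p$-linear in $h$ because the algebra action is $\ZZ_p$-linear. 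Proposition~\ref{prop:bilinear-form-to-measure} then produces a (unique) $\bbV$-valued measure $\nu$ on $\ZZ_p \times \ZZ_p$ with
\[ \nu\big(f(x)\,h(y)\big) = (f,h) = h\cdot\mu^{(a)}(f) \]
for all $f,h \in \Cont(\ZZ_p,\ZZ_p)$, where $x$ and $y$ denote the two coordinates.

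It then remains only to read off the moments. Taking $f = z^s$ and $h = z^t$ gives $\nu(x^s y^t) = z^t \cdot \mu^{(a)}(z^s)$. The defining moments of $\mu^{(a)}$, applied with $k = s+1$, give $\mu^{(a)}(z^s) = (1-a^{s+1})\,2G_{s+1}$; and by Theorem~\ref{theorem:algebra-action-q-expansions} applied to the monomial function $z^t$ — equivalently, because $z^t$ acts as the derivation $\theta^t$ on $q$-expansions and the total $q$-expansion map is injective on $\bbV$ — we have $z^t\cdot g = \theta^t g$ for every $g \in \bbV$. Combining these, $\nu(x^s y^t) = (1-a^{s+1})\,\theta^t 2G_{s+1} = \psi\big((1-a^{s+1})2G_{s+1}\big)(y^t)$, as asserted. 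Since everything is a formal consequence of Proposition~\ref{prop:bilinear-form-to-measure}, Theorem~\ref{theorem:algebra-action-q-expansions}, and the known moment formula for $\mu^{(a)}$, I do not expect a genuine obstacle here; the only steps warranting care are confirming that the algebra action genuinely preserves $\bbV$ (rather than merely $\bbV_{\Katz}$) and that $\bbV$ is $p$-adically complete, so that the bilinear-form-to-measure construction applies — both of which follow from the $\GL_2(\wh{\bbZ}^{(p)})$-equivariance in Theorem~\ref{maintheorem:big-hecke} and the flatness and $p$-adic completeness of $\bbV_{\Katz}$.
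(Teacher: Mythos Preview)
Your proposal is correct and follows essentially the same route as the paper: form the $\bbV$-valued bilinear pairing $(f,h)\mapsto \psi(\mu^{(a)}(f))(h)$ from the single-variable Eisenstein measure and the algebra action, apply Proposition~\ref{prop:bilinear-form-to-measure}, and read off the moments. You have in fact supplied a bit more detail than the paper does, explicitly noting why the algebra action preserves the level-$\GL_2(\wh{\bbZ}^{(p)})$ subring $\bbV$ and why Proposition~\ref{prop:bilinear-form-to-measure} applies.
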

\begin{proof}
From the one-variable Eisenstein measure $\mu^{(a)}$ and the action map 
\[ \psi: \VV \rightarrow \mathrm{Meas} (\ZZ_p, \VV), \]
we obtain a bilinear form on $\Cont(\ZZ_p, \ZZ_p) \times \Cont(\ZZ_p, \ZZ_p)$ 
\[ (f, g) \mapsto \psi( \mu^{(a)}(f) )(g) \]
and thus, by Proposition \ref{prop:bilinear-form-to-measure}, a measure $\nu$ such that
\[ \nu (x^s y^t) = \psi( (1-a^{s+1})2 G_{s+1} ) (y^t) = (1-a^{s+1}) \theta^t 2G_{s+1}. \]
\end{proof}

Comparing with (\ref{eqn:halved-moments}), we see that the measure $\nu$ interpolates the same Eisenstein series as $\varphi_* \mu^{(a,1)}$, although with a different normalizing factor (recall that this normalizing factor removes the powers of $p$ in the denominator of the constant term $\zeta(1-k)$ of $G_k$ when $k\equiv - 1 \mod p-1$). We note\footnote{We thank an anonymous referee for pointing this out!} that this is enough to essentially recover the two variable $p$-adic $L$-function $\calL$ of \cite[Chapter VII]{katz:p-adic-interpolation-of-real-analytic-eisenstein-series}, because the map $\varphi$ induces an automorphism of $\l(\bbZ_p^\times\r)^2$ and the construction of $\calL$ passes through the restriction from $\bbZ_p^2$ to $\l(\bbZ_p^\times\r)^2$. Concretely, if $\chi$ is a continuous character of $(\bbZ_p^\times)^2$, $\chi(x,y)=\chi_1(x)\chi_2(y)$, then  
\[  \l(1- \frac{\chi_1(a) a}{\chi_2(a)}\r) \calL(\chi) =\nu \l( \frac{\chi_1(x)}{\chi_2(x)}\chi_2(y) \r), \]
where on the right we have implicitly extended the function $\frac{\chi_1(x)}{\chi_2(x)}\chi_2(y)$ on $(\bbZ_p^\times)^2$ by zero to obtain a function on $\bbZ_p^2$. 

 This type of construction may be useful for studying special values of families of automorphic forms and their images under differential operators on other Shimura varieties where explicit computations with $q$-expansions are not always available.

\bibliographystyle{plain}
\bibliography{refs}

\end{document}